\definecolor{light-gray}{gray}{0.95}
\def\centerarc[#1](#2)(#3:#4:#5){\draw[#1] ($(#2)+({#5*cos(#3)},{#5*sin(#3)})$) arc (#3:#4:#5);}
\numberwithin{equation}{section}
\theoremstyle{plain}
\newtheorem{theorem}{Theorem}
\newtheorem{lemma}{Lemma}
\newtheorem{proposition}{Proposition}
\theoremstyle{remark}
\newtheorem{remark}{Remark}
\newtheorem{definition}{Definition}
\newtheorem{assumption}{Assumption}
\newcommand{\mc}[1]{{\mathcal #1}}
\newcommand{\bb}[1]{{\mathbb #1}}
\newcommand{\<}{\langle}
\renewcommand{\>}{\rangle}
\renewcommand{\epsilon}{\varepsilon}
\newcommand{\R}{\mathbb R}
\newcommand{\Z}{\mathbb Z}
\newcommand{\N}{\mathbb N}
\renewcommand{\P}{\mathbb P}
\newcommand{\T}{\mathbb T}
\newcommand{\E}{\mathbb E}
\renewcommand{\bar}{\overline}
\renewcommand{\tilde}{\widetilde}
\renewcommand{\hat}{\widehat}
\newcommand{\supp}{{\rm{supp}}}
\newcommand{\norm}[1]{\left\vert\left\vert #1 \right\vert\right\vert}
\newcommand{\normm}[1]{{\left\vert\kern-0.1ex\left\vert\kern-0.1ex\left\vert\; #1 \; \right\vert\kern-0.1ex\right\vert\kern-0.1ex\right\vert}}    
\newcommand{\cro}[1]{\left[#1\right]}
\newcommand{\pa}[1]{\left(#1\right)}
\newcommand{\genex}{\mathscr{L}_N^{\mathrm{FEX}}}
\newcommand{\genzr}{\mathscr{L}_M^{\mathrm{FZR}}}
\newcommand{\tgenzr}{\widetilde{\mathscr{L}}_M^{\mathrm{FZR}}}
\renewcommand{\leq}{\leqslant}
\renewcommand{\geq}{\geqslant}
\newcommand{\ang}[1]{\big\langle #1 \big\rangle}
\newcommand{\bL}{\mathbb{L}}
\newcommand{\mm}[1]{{#1}}
\newcommand{\ccl}[1]{{#1}}											
\newcommand{\lj}[1]{{#1}}
\begin{document}

\begin{frontmatter}
\title{Mapping hydrodynamics for the facilitated exclusion and zero-range processes\thanksref{T1}}
\runtitle{Mapping hydrodynamics for FEP and FZR processes}
\thankstext{T1}{\textsc{Acknowledgments:} \mm{We warmly thank the anonymous referees for their  careful reading and remarks which contributed to a better version of this paper.} This work has been done while M. Simon was appointed at Gran Sasso Science Institute (GSSI), L'Aquila, Italy. This project is partially supported by the ANR grant MICMOV (ANR-19-CE40-0012)
of the French National Research Agency (ANR), and by the European Union with the program FEDER ``Fonds européen de développement régional'' with the Région Hauts-de-France. It has also received funding from the European
Research Council (ERC) under the European Union’s Horizon 2020 research and innovative program
(grant agreement n° 715734), from
Labex CEMPI (ANR-11-LABX-0007-01), and from the Fundamental Research Funds for the Central Universities in China.
\newline
\textsc{Data availability statement}: data sharing not applicable to this article as no datasets were generated or analyzed during the current study.}

\begin{aug}
\author[A]{\fnms{Erignoux}~\snm{Cl\'ement}\ead[label=e1]{clement.erignoux@inria.fr}},
\author[B]{\fnms{Simon}~\snm{Marielle}\ead[label=e2]{msimon@math.univ-lyon1.fr}}
\and
\author[C]{\fnms{Zhao}~\snm{Linjie}\ead[label=e3]{linjie$\_$zhao@hust.edu.cn}}

\address[A]{Inria, Univ. Lille, CNRS, UMR 8524 - Laboratoire Paul Painlev\'e, F-59000 Lille\printead[presep={,\ }]{e1}}
\address[B]{Univ Lyon, CNRS, Université Claude Bernard Lyon 1, UMR 5208, Institut Camille Jordan, F-69622 Villeurbanne, France\printead[presep={,\ }]{e2}}
\address[C]{School of Mathematics and Statistics, Huazhong University of Science and Technology, Wuhan, 430074, China\printead[presep={,\ }]{e3}}

\end{aug}

\begin{abstract}
 We derive the hydrodynamic limit for two degenerate lattice gases, the \emph{facilitated exclusion process} (FEP) and the \emph{facilitated zero-range process} (FZRP), both in the symmetric and the asymmetric case. For both processes, the hydrodynamic limit in the symmetric case takes the form of a diffusive Stefan problem, whereas the asymmetric case is characterized by a hyperbolic Stefan problem. Although the FZRP is attractive, a property that we extensively use to derive its hydrodynamic limits in both cases, the FEP is not. To derive the hydrodynamic limit for the latter, we exploit that of the zero-range process, together with a classical mapping between exclusion and zero-range processes, both at the microscopic and macroscopic level. Due to the degeneracy of both processes, the asymmetric case is a new result, but our work  also provides a simpler proof than the one that was previously proposed for the FEP in the symmetric case in \cite{blondel2021stefan}.
\end{abstract}

\begin{keyword}[class=MSC]
\kwd[Primary]{35R35}
\kwd{60J27}
\kwd{60K35}
\end{keyword}

\begin{keyword}
\kwd{Hydrodynamic limit}
\kwd{Facilitated microscopic dynamics}
\kwd{Stefan problem}
\end{keyword}

\end{frontmatter}


\section{Introduction}

Recently, the mathematical understanding of \emph{free boundary problems} has generated ongoing interest in the scientific community. The so-called \textit{Stefan problem} introduced by J.~Stefan in \cite{Stefan} typically describes the temperature distribution in a homogeneous medium which is subject to a {phase change}. Assuming that heat linearly diffuses, one can write  the evolution equation of both separate phases, with the separation interface allowed to evolve in time. Let us define for instance the liquid region as the domain where $\rho>0$ and the solid region  where $\rho=0$. In one dimension, the mathematical formulation of the Stefan problem is the following: find a curve $x=\Gamma(t)$ and a function $\rho(x,t) \geqslant 0$ such that
\begin{equation}
\label{eq:Stef}
\begin{cases}
& \partial_t \rho = D \partial_{xx}^2\rho, \qquad \text{if } 0 < x < \Gamma(t), \\
& \rho(x,t) = 0, \qquad \;  \; \; \; \text{if } x \geqslant \Gamma(t),
\end{cases} \qquad \text{and} \qquad  \frac{d\Gamma}{dt} = -\partial_{x} \rho(\Gamma(t),t),
\end{equation} plus initial and boundary conditions. Physically speaking, the  second condition on the {free boundary} $\Gamma(t)$  translates the presence of latent heat at the phase transition. The mathematical solution to \eqref{eq:Stef} is usually obtained \textit{via} the weak formulation and the regularity of $\Gamma(t)$ can be analyzed (see \textit{e.g.}~\cite{Meir} for a review).  

Such a macroscopic behavior has been lately derived in \cite{blondel2021stefan} from a one-dimensional underlying microscopic system of interacting particles, whose dynamics is generated by a Markov jump process with degenerate jump rates.  The approach followed by the authors is based on a mathematical procedure  called \emph{hydrodynamic limit}, \textit{i.e.}~the macroscopic behavior is obtained via a long-time and large-space scaling limit (see for instance \cite{klscaling} for a review). The underlying microscopic model is called \emph{facilitated exclusion process} (FEP), and belongs to the class of exclusion processes, with kinetic constraints, which have attracted a lot of interest in recent years due to their rich and complex behavior. In the FEP, particles are spread on the one-dimensional discrete lattice, satisfying the exclusion rule which authorizes at most one particle per site. Each particle performs random jumps to one of its two neighboring sites, providing that: (i) the neighboring site to which the jump is directed is not occupied by another particle, (ii) the other neighbor \emph{is} occupied by a particle. If particles choose one of the two neighboring sites with equal probability, the FEP is called \emph{symmetric}, otherwise \emph{asymmetric}. As a result of the second hard constraint (ii), the FEP exhibits two phases, one \lj{called} \emph{frozen}, where particles quickly stop moving, and the other one \emph{active}, where particles adopt a diffusive behavior. Both phases are separated at a critical value for the density of particles given by $\rho_c=\frac12$. 
The facilitated exclusion process has been widely explored in the recent years after being introduced by \cite{RossiPastorVespignani00} in 2000. On one hand, in the physics literature, the critical behaviors, such as the critical density and the critical exponents, in different dimensions have been investigated in \cite{BasuMohanty09,oliveira,lubeck}. In \cite{GabelKrapivskyRedner10} the authors found the phenomenon of jump continuity at the leading edge of rarefaction waves, which is quite different from asymmetric simple exclusion. They also formally derive the macroscopic behavior of the particle density when the initial profile is a \emph{step} function (with two constant densities $\rho_{\pm}$ to the right and to the left of the origin), and they predict the \emph{shock waves} phenomenon, which is a consequence of the hydrodynamic limit proved in this paper.   On the other hand, in the mathematics literature, the stationary states of the facilitated exclusion, either in the continuous or discrete time setting, have been studied in \cite{Ayyer2020StationarySO,Chen2019,2019JSMTE,Goldstein2020,Goldstein2022}. Limit theorems have also been proved in  \cite{BaikBarraquandCorwinToufic16} for the position of the rightmost particle starting from step initial condition.

The  \emph{symmetric FEP} has recently  been under significant scrutiny. For instance, the diffusive hydrodynamics of the supercritical phase ($\rho>\frac12$) had   been  investigated in \cite{blondel2020hydrodynamic}. Then, in \cite{blondel2021stefan}, it has been proved that, 
starting from an initial density profile $\rho^{\rm ini}$ with both supercritical and
subcritical regions, the hydrodynamics for the symmetric FEP is given by a Stefan problem:
the diffusive supercritical phase progressively invades the subcritical phase via flat interfaces, until either
one of the phases disappears, and the macroscopic density $\rho$ (obtained as the limit of the empirical density in the diffusive space-time scaling) is the unique weak solution to 
\begin{equation}
\label{eq:stefanintro}
\partial_t \rho = \partial_{xx}\big( \tfrac{2\rho-1}\rho \mathbbm{1}_{\{\rho>\frac12\}}\big), \qquad \rho|_{t=0}=\rho^{\rm ini}.
\end{equation} 
One of the important tools used in \cite{blondel2020hydrodynamic, blondel2021stefan} to derive the macroscopic limit of the symmetric FEP is a classical mapping between exclusion processes and zero-range processes (which can be implemented in both symmetric and asymmetric cases). In the latter, particles are spread on the one-dimensional discrete lattice but there is no constraint on the number of particles per site. More precisely, one can map the exclusion configuration of particles $\eta\in\{0,1\}^{\Z}$  to a zero-range configuration $\omega\in\N^\Z$ as follows: in $\eta$, look for the first empty site to the right of or at the origin, label it $1$. Moving to the right, for any $i>0$,  define $\omega(i)$ as the number of particles between the $i$--th and $(i+1)$--th empty sites in $\eta$, and do the same moving to the left in order to define $\omega(i)$ for $i\leqslant 0$. See Figure \ref{fig:mapping} for an illustration. 
Note that this mapping is not one-to-one: 
shifting $\eta$ one site to the left may not change $\omega$. In particular, 
the reverse mapping is only defined up to the position of the empty site with label $1$. Moreover,  through this mapping the correspondence between invariant measures  is not trivial. For only a few dynamics are the properties of the invariant measures   seen from a tagged particle known; \lj{for} the simple exclusion process, for example, they are product Bernoulli, however the dynamics of the FEP is much more complex. In the litterature, this mapping between exclusion and zero-range processes has been exploited in other contexts. In \cite{Kipnis86} for instance, the author used it in order to prove a central limit theorem for a tagged particle in the asymmetric simple exclusion process (ASEP); while in \cite{FunakiSasada}, the weakly asymmetric zero-range process with a
stochastic reservoir at the boundary (associated with the dynamics of two-dimensional Young diagrams) is mapped to the weakly asymmetric
simple exclusion process on the full line without any boundary condition, for which the hydrodynamic limit is known.

 In \cite{blondel2021stefan}, the above mapping has been used for the symmetric FEP in order to prove an ergodic decomposition of any infinite volume stationary measure \textit{à la} De Finetti, but has not been used directly to derive the hydrodynamic limit, which has been proved using Funaki's strategy based on Young measures \cite{funaki1999free}.  In fact, using the transformation described above, the exclusion process $\{\eta_t\}_{t\geqslant 0}$ can be coupled with a \emph{facilitated zero-range process} (FZRP) $\{\omega_t\}_{t\geqslant 0}$ as follows: whenever a particle jumps in the process $\eta_t$, a particle in the corresponding pile in $\omega_t$ jumps in the same direction.  In particular, a jump of a particle to an empty site is allowed in the FEP if and only if the corresponding pile in the FZRP has at least two particles. Then, one can easily check that $\{\omega_t\}_{t\geq 0}$ is a  zero-range process (symmetric or asymmetric) with the jump rate function: $
g(\omega):= \mathbbm{1}_{\{\omega (0) \geqslant 2\}}.$ 
 Working with the FZRP has two main advantages: first, the invariant measures for this process are simpler, they are geometric product measures; second, it has the so-called \emph{attractiveness property}, which permits to use several powerful tools, as noted in \cite{rezakhanlou91} for instance.
As a result, its hydrodynamic limit can be derived, both in the symmetric and asymmetric case, following two steps: (i) first, obtain the one-block estimate in the context of Stefan problems, thanks to the decomposition of translation invariant stationary states, as implemented by Funaki in \cite{funaki1999free}; (ii) second, derive the two-blocks estimate using the attractiveness property, as given in Rezakhanlou \cite{rezakhanlou91}. Working a little, one can then obtain that: \begin{itemize} \item For the symmetric FZRP, in the \emph{diffusive} space-time scaling, the empirical density profile converges towards the unique weak solution to a Stefan problem:
\begin{equation}
\label{eq:StefanZR}
\partial_t \alpha = \partial_{xx}\big(\tfrac{\alpha-1}{\alpha} \mathbbm{1}_{\alpha >1}\big),
\end{equation}
starting from some suitable initial profile $\alpha^{\rm ini}$.
\item For the asymmetric FZRP with asymmetry bias $p \in (\frac12,1]$, in the \emph{hyperbolic} space-time scaling, the empirical density profile converges towards the unique entropy solution to the hyperbolic Stefan problem 
\begin{equation}
\label{eq:StefanZRasym}
\lj{\partial_t\alpha + (2p-1) \partial_x\big( \tfrac{\alpha-1}{\alpha} \mathbbm{1}_{\alpha >1}\big) = 0,}
\end{equation}
starting from $\alpha^{\rm ini}$.
\end{itemize}
Note however that even using attractiveness, because of the two phased nature of the process, deriving its hydrodynamic is not completely straightforward.

\medskip

A natural strategy to derive the hydrodynamic limits for the FEP is then to  deduce it from the one of the FZRP, for instance to deduce \eqref{eq:stefanintro}  from \eqref{eq:StefanZR} in the symmetric case. However,  new difficulties appear. First, one needs to carefully write the corresponding macroscopic mapping so as to \ccl{show that the image through the mapping of the solution $\alpha$ of \eqref{eq:StefanZR} is the solution $\rho$ of \eqref{eq:stefanintro}}, which is far from trivial because of the different space scales, and because the solutions to \eqref{eq:stefanintro} and \eqref{eq:StefanZR} are not regular -- in particular they have discontinuities \ccl{at the interfaces between active and frozen phases}. Second, it is not straightforward to deduce a hydrodynamic limit  through \lj{the mapping between FEP and FZRP}, because the initial \emph{local equilibrium} measure for the FEP, which is assumed to be product and fits the initial profile $\rho^{\rm ini}$, is \emph{not} mapped onto a \emph{product} local equilibrium measure for the FZRP. This is problematic because the strategy followed by Rezakhanlou \lj{has} been \lj{developed} only for product initial measures.
  
  The objective of this paper is twofold: \begin{enumerate}
  \item We give a new, simpler proof of the hydrodynamic limit for the symmetric FEP previously derived in \cite{blondel2021stefan}, driven by the Stefan problem \eqref{eq:stefanintro}, by implementing rigorously the above strategy based on the mapping mentioned previously. 
  \item We obtain for the first time the hydrodynamic limit of the asymmetric FEP, by adapting the strategy above to the asymmetric case. If $p\in(\frac12,1]$ denotes the asymmetry bias to the right, we then show that the macroscopic density is the unique entropy solution of the hyperbolic Stefan problem
  \begin{equation}\label{eq:stefanintro-asym} \partial_t \rho + (2p-1) \partial_x \big( \tfrac{(1-\rho)(2\rho-1)}{\rho}\mathbbm{1}_{\rho>\frac12}\big)\ccl{=0}, \qquad \rho|_{t=0}=\rho^{\rm ini}.\end{equation}
  \end{enumerate}
 These two results are contained in Theorem \ref{thm:ex} below.

The technical novelties which are needed with respect to what already exists in the literature are the following: \begin{itemize}
  \item One needs to derive the hydrodynamic limit for the (attractive) FZRP, both in symmetric and asymmetric cases, starting from a local equilibrium measure fitting $\alpha^{\rm ini}$ which is \emph{not product}. Because of the degeneracy of the jump rates, usual entropy tools\footnote{namely entropy method or relative entropy method, as developed in \cite{klscaling}.} cannot be used. \lj{In fact, using only the tools of \cite{rezakhanlou91}, the initial probability distribution being product is fundamental to derive the hydrodynamic limits, because it is used to dominate the process by an equilibrium state and to obtain the two-blocks estimates.  Therefore, the rigourous derivation of \eqref{eq:StefanZR} from a non-product initial measure is already interesting on its own, see Theorem \ref{thm:zr}.}
   \item  One needs to carefully map the solutions to equations \eqref{eq:stefanintro} and \eqref{eq:stefanintro-asym} onto the solutions to \eqref{eq:StefanZR} and \eqref{eq:StefanZRasym}, respectively, taking into account the fact the none of these solutions are regular. This is done by smooth approximations, and relates to the PDE theory of \ccl{Stefan} problems.
  \end{itemize}

\subsection{Outline of the paper} In Section \ref{sec:results} we start by introducing the (symmetric and asymmetric) FEP, as well as the (symmetric and asymmetric) FZRP, and we state the main results about their hydrodynamic limits, namely Theorem \ref{thm:ex} for the macroscopic behavior of the FEP, and Theorem \ref{thm:zr} for the macroscopic behavior of the FZRP. We end this section by an explanation of the strategy of the proof, see Section \ref{ssec:strategy}. In Section \ref{sec:sym} we give the complete proof of the hydrodynamic limit \eqref{eq:stefanintro} for the symmetric FEP, after having rigorously defined both mappings at the microscopic and macroscopic level, and assuming that the hydrodynamic limit for the symmetric FZRP holds (the latter will be proved independently in the last section). In Section \ref{sec:asym} we implement the same strategy in the asymmetric case, in order to derive \eqref{eq:stefanintro-asym}. Since the hydrodynamic equations are pretty different (and so are the time scales), one cannot straightforwardly use the same arguments as in Section \ref{sec:sym}. Finally, \lj{ Section \ref{sec:hydroZR} is} devoted to the proof of Theorem \ref{thm:zr}, namely the hydrodynamic limit for the FZRP both in the symmetric and asymmetric cases, and starting from an initial probability measure which is not necessarily product.

\subsection{General notations}
In this article, we consider  systems of particles which are either symmetric and evolve  on the one-dimensional finite ring, or asymmetric 	and evolve on the infinite line. If $N\in\N$ is the scaling parameter we denote by $\bb{L}_N$ the corresponding discrete space which is given by
\begin{itemize}
	\item $\bb{L}_N=\T_N =\Z/N\Z$, namely the discrete ring of size $N$, \lj{for the symmetric case}; 
	\item $\bb{L}_N = \bb{Z}$, namely the infinite discrete line, \lj{for the asymmetric case}. Note that in this case the lattice does not depend on the scaling parameter, but \lj{since the rescaled process does} (see below), we keep $N$ in the notation in order to indicate that it corresponds to the discrete setting.
\end{itemize}
We denote by $\bL$ the continuous limit of the discrete lattice $\bL_N$, namely 
\[ \bL:= \begin{cases}  \T:=[0,1]=\R/\Z  & \text{ if } \bL_N=\T_N\\ \R  & \text{ if }\bL_N=\Z. \end{cases}\] Given two functions $f, g$ in $L^2(\bL)$, we denote by 
\[\big\langle  f, g \big\rangle:=\int_\bL f(u) g(u) du,\]
the standard scalar product of $f$ and $g$. 

Throughout, we will alternate between two particle systems: one being a \emph{zero-range} process, the other one being an \emph{exclusion} process. Therefore we find it convenient to introduce distinct notations for these two processes, which we summarize here (all precise definitions will be given  in the next sections):
 \begin{center}
\begin{tabular}{l|c|cc}
& \textsc{exclusion} & \textsc{zero-range} &\\
Scaling parameter & $N$& $M$ &\\
Discrete configuration & $\eta_x, \; x \in\bb{L}_N$ & $\omega_y, \; y \in \bb{L}_M$ & \\
Law of the process & $\bb{P}_\nu$ & $\bf P_\mu$ & ($\nu,\mu$ being initial prob. measures)\\
Macroscopic density & $\rho(u), \; u \in\bb{L}$ & $\alpha(v), \; v \in\bb{L}$  & 
\end{tabular}
\end{center}
\medskip

Finally, we will work with the following spaces of functions. Let $U,V$ be two open subsets of $\R^d$. Then, $C^{k,\ell}(U\times V)$ is the set of functions $f:U\times V \to R$ which are $C^k$ (resp.~$C^\ell$) regular in the first (resp.~second) variable; $C_c(U)$ (resp.~$C^\infty(U)$) is the space of compactly supported (resp.~smooth) real-valued functions defined on $U$; $L_{\rm loc}^1(U)$ is the space of locally integrable functions defined on $U$. The usual $L^p(U)$--spaces are endowed with their norm denoted by $\|\cdot\|_{L^p(U)}$, $p\in[1,+\infty]$. Finally,  a function $f:\R \rightarrow \R$ is of bounded variation if the total variation of $f$ defined as below is finite:
	\[TV(f) := \sup \Big\{ \int_\R f(u) g^\prime (u) du: g \in C_c^\infty (\R), ||g||_{L^\infty (\R)} \leq 1 \Big\}\]  and we denote by $BV(\R)$ the set of functions $f:\R\to\R$ which are of bounded variation.

\section{Main results} \label{sec:results}

\subsection{Hydrodynamic limit for the facilitated exclusion process}

Let us start by defining the microscopic particle system. We denote by $N\in \N$ the scaling parameter for the exclusion process. Its \emph{particle configurations} $\eta$ are sequences of $0$'s and $1$'s indexed by $\bb{L}_N$, namely  $\eta_x=1$ if and only if  site $x \in \bL_N$ is occupied by a particle. The \emph{facilitated exclusion process on $\bL_N$} is a Markov process on the set of configurations $ \Sigma_N:=\{0,1\}^{\bb{L}_N}$. 
The infinitesimal generator ruling the evolution in time of this Markov process is given by $\genex$, which acts on  functions $f:\Sigma_N \to \R$ as 
\begin{equation}
\label{eq:DefLN}
\genex f(\eta):=\sum_{x\in\bb{L}_N} c_{x,x+1}(\eta)\big(f(\eta^{x,x+1})-f(\eta)\big),
\end{equation}
where $\eta^{x,x'}$ denotes the configuration obtained from $\eta$ by swapping the values at sites $x$ and $x'$,
\[
\eta_z^{x,x'}=
\begin{cases}
\eta_{x'} & \mbox{ if } z=x,\\
\eta_x & \mbox{ if } z=x',\\
\eta_z & \mbox{ otherwise.}
\end{cases}
\]
The jump rates $c_{x,x'}(\eta)$ encode two dynamical constraints: 
\begin{enumerate}
\item [i.] the \emph{exclusion rule},  which imposes no more than one particle at each site,
\item [ii.] the \emph{facilitated rule}, which asks for a neighboring occupied site  in order for a particle to jump to the other neighboring empty site.
\end{enumerate} Moreover, we consider here nearest-neighbor dynamics, and there are two parameters $p,p^\prime \in [0,1]$  which regulate the choice of one of the two possible jump directions (to the left or to the right). The jump rate to swap the values $\eta_x$ and $\eta_{x+1}$ is thus given by
\begin{equation} 
\label{eq:rate} 
c_{x,x+1}(\eta)=p \eta_{x-1}\eta_x(1-\eta_{x+1})+p^\prime (1-\eta_{x})\eta_{x+1}\eta_{x+2}.
\end{equation}
Note that $p=p'$  corresponds to the \emph{symmetric} case, while $p\neq p'$ to the \emph{asymmetric} one. 

\begin{remark} In the asymmetric case $p\neq p'$, recall that we will take $\bL_N=\Z$, and therefore the generator $\genex$ does not depend on $N$, nevertheless we keep it as an index to keep in mind that $N$ is the scaling parameter for the facilitated exclusion process.\end{remark}

Let us now recall some results from \cite{blondel2020hydrodynamic,blondel2021stefan}: the facilitated exclusion process displays a \emph{phase transition} at the critical density $\rho_c=\frac12$. Indeed,
\ccl{because of the facilitated constraint (rule ii. above)}, pairs of neighboring empty sites cannot be created by the
dynamics. Therefore, if at initial time the density of particles $\rho$ is bigger than $\rho_c$ (at least half of the sites are occupied), then particles will perform random jumps
in the microscopic system until there are no longer two neighboring empty sites. Similarly, if
initially $\rho<\rho_c$  (at least half of the sites are empty), particles will perform random jumps until all particles can no longer move. The particle configurations can therefore be divided into several categories: 
\begin{itemize}
\item the \emph{ergodic} configurations, where all empty sites are isolated, namely:

 $\eta$ is \emph{ergodic} if, for any $x\in\bL_N$, $\eta_x+\eta_{x+1}\geqslant 1$; 
\item the \emph{frozen} configurations, where all particles are isolated, namely: 

$\eta$ is \emph{frozen} if, for any $x\in\bL_N$, $\eta_x+\eta_{x+1}\leqslant 1$;

\noindent \ccl{Note in particular that  the alternated configurations, where each particle is surrounded by empty sites and vice-versa, are critical, since their density is exactly $\frac12$, and it is convenient for them to be considered both frozen \emph{and} ergodic:  they are indeed frozen (no particle is allowed jump in them), and because they have probability non-zero under the grand canonical distribution of the process, whose support we refer to as the \emph{ergodic component}, it is natural to see them as ergodic as well.}

\item the \emph{transient} configurations, which are the remaining ones, those which are neither ergodic, nor frozen. They are called transient in \cite{blondel2020hydrodynamic,blondel2021stefan} because, assuming that $\bL_N=\T_N$ is of size $N$ and starting from a transient configuration, the microscopic process will evolve towards either the ergodic or frozen component, after a number of jumps which is finite a.s (and depend on the size $N$ of the lattice and the initial distribution of particles). See Figure \ref{fig:setEF} below.
\end{itemize} 

\begin{center}
\begin{figure}[h]
\centering
\begin{tikzpicture}
\node at (1,0.5) {\color{white}x};
\draw (0,0) -- (8,0);
\foreach \i in {0,...,7}
{
\draw (\i+0.5,-0.1) -- (\i+0.5,0.1);
}
\node[circle,fill=black,inner sep=1mm] at (0.5,0.2) {};
\node[circle,fill=black,inner sep=1mm] at (3.5,0.2) {};
\node[circle,fill=black,inner sep=1mm] at (5.5,0.2) {};
\node[circle,fill=black,inner sep=1mm] at (1.5,0.2) {};
\node[circle,fill=black,inner sep=1mm] at (6.5,0.2) {};

\node[anchor=west] at (8.2,0.1) {$\eta$ is ergodic};

\draw (0,-1) -- (8,-1);
\foreach \i in {0,...,7}
{
\draw (\i+0.5,-1.1) -- (\i+0.5,-0.9);
}
\node[circle,fill=black,inner sep=1mm] at (4.5,-0.8) {};
\node[circle,fill=black,inner sep=1mm] at (7.5,-0.8) {};
\node[circle,fill=black,inner sep=1mm] at (2.5,-0.8) {};
\node[anchor=west] at (8.2,-0.9) {$\eta$ is frozen};

\draw (0,-2) -- (8,-2);
\foreach \i in {0,...,7}
{
\draw (\i+0.5,-2.1) -- (\i+0.5,-1.9);
}
\node[circle,fill=black,inner sep=1mm] at (3.5,-1.8) {};

\node[circle,fill=black,inner sep=1mm] at (4.5,-1.8) {};
\node[circle,fill=black,inner sep=1mm] at (2.5,-1.8) {};
\node[anchor=west] at (8.2,-1.9) {$\eta$ is transient and will become frozen};

\draw (0,-3) -- (8,-3);
\foreach \i in {0,...,7}
{
\draw (\i+0.5,-3.1) -- (\i+0.5,-2.9);
}
\node[circle,fill=black,inner sep=1mm] at (3.5,-2.8) {};
\node[circle,fill=black,inner sep=1mm] at (1.5,-2.8) {};
\node[circle,fill=black,inner sep=1mm] at (7.5,-2.8) {};

\node[circle,fill=black,inner sep=1mm] at (4.5,-2.8) {};
\node[circle,fill=black,inner sep=1mm] at (2.5,-2.8) {};
\node[anchor=west] at (8.2,-2.9) {$\eta$ is transient and will become ergodic};

\end{tikzpicture}
\caption{Example of configurations belonging to the ergodic, frozen and transient sets, with $N=8$ \ccl{sites in a periodic setting (lattice for the symmetric case)}.}
\label{fig:setEF}
\end{figure}

\end{center}

As a consequence,
the invariant measures of the facilitated process are not independent products of homogeneous Bernoulli
measures (as in the standard Simple Exclusion Process for instance). More precisely: 
 if  $\rho>\rho_c$, then
there is a unique invariant canonical measure $\pi_\rho$ on $\{0,1\}^{\bb Z}$ (which can be described explicitly, see \cite[Section 6]{blondel2020hydrodynamic}), while all the invariant measures are superpositions of atoms (concentrated on frozen configurations) if the
density is less than $\rho_c$. We refer the reader to \cite[Lemma 3.6]{blondel2021stefan} for the full characterization of all stationary measures, which has been proved in the symmetric case $p=p'$, but the exact same argument remains valid for the asymmetric case where $p\neq p'$.

\medskip

In the present work we investigate the rescaled  process $\{\eta(t)\}_{t\geqslant 0}$ with generator $N^{\kappa}\genex$, where \begin{equation}\kappa= \begin{cases} 2 & \text{ in the symmetric case } p=p'=1 \quad \text{ (diffusive scaling)}, \\ 1 & \text{ in the asymmetric case } p'=1-p\in[0,\tfrac12)\quad \text{ (hyperbolic scaling).} \end{cases}\label{eq:kappa}\end{equation}
We denote by $u\simeq x/N$ the continuous space variable.  For an initial density profile $\rho^{\mathrm{ini}}:\bL\to[0,1]$, let us define  the   initial  product  distribution $\nu_N$  on $\Sigma_N$ by its marginals
\begin{equation}\label{eq:nuN}\eta_x(0)=
\begin{cases}
	1 & \mbox{ with probability }\rho^{\mathrm{ini}}(\frac xN)\\
	0& \mbox{ with probability }1-\rho^{\mathrm{ini}}(\frac xN)
\end{cases} \quad \text{for any } x \in \bb{L}_N.
\end{equation}
In other words, $\nu_N$ is a non-homogeneous product of Bernoulli measures which fits $\rho^{\rm ini}$, since it satisfies the following convergence: 
\[\lim_{N\to\infty} \frac1N \sum_{x\in\bL_N} \eta_x \varphi(\tfrac x N) = \int_\bL \rho^{\rm ini}(u)\varphi(u)du, \qquad \text{in } \nu_N\text{--probability,}\] for any continuous test function $\varphi$. Note that, under $\nu_N$, there can be two neighboring empty sites with positive probability, therefore the initial configuration is not supposed to be ergodic.

Fix a time horizon $T > 0$.  For an initial probability measure $\nu$ on $\Sigma_N$, denote by $\P_{\nu}$ the measure on the path space of c\`adl\`ag trajectories $D([0,T],\Sigma_N)$ associated to the rescaled process $\eta(t)$ with generator $N^ \kappa \genex$ and initial distribution $\nu$. In each section, we will clearly state if we are looking at the asymmetric or symmetric case, therefore we do not burden our notations by making the constant $\kappa$ appear explicitly. 

\medskip

Now, let us introduce the notations which are necessary in order to understand the macroscopic evolution of this system. First, let us define the following functions on $[0,1]$:
\begin{equation}
\label{eq:defH} 
\mathcal{H}(r)=\frac{2r-1}{r}\mathbbm{1}_{\{r> \tfrac12\}}\qquad  \mbox{and} \qquad \mathfrak{H} (r)  = \frac{(1-r) (2r -1)}{r} \mathbbm{1}_{\{r > \tfrac12\}}=(1-r)\mathcal{H}(r).
\end{equation}
We are now ready to define the notion of solution to two PDEs corresponding to the two possible macroscopic limits of the facilitated exclusion process: (i) \lj{a parabolic} equation (in the symmetric case) (ii) a hyperbolic equation (in the asymmetric case). 

\begin{definition}[\lj{Parabolic} \ccl{Stefan problem} for exclusion]
\label{Def:PDEweak}
Fix a measurable initial profile $\rho^{\rm ini}:\T\to[0,1]$. 
We say that a  measurable function $\rho:(t,u)\in\R_+\times \T \mapsto\rho_t(u) \in [0,1]$ is a \emph{weak solution to the \ccl{Stefan problem}}
\begin{equation}
\label{eq:PDEstrong}
\partial_t\rho=\partial_u^2\mathcal{H}(\rho)
\end{equation}
with initial condition $\rho_0=\rho^{\mathrm{ini}}$, if for any test function  $\varphi_t(u)\in C^{1,2}(\R_+\times \T)$, any $t>0$,
\begin{equation}
\label{eq:weakF1}
\big\langle \rho_t,\varphi_t \big\rangle = \big\langle \rho^{\rm ini}, \varphi_0 \big\rangle+\int_0^t\big\langle \rho_s,\partial_s\varphi_s \big\rangle ds +\int_0^t \big\langle \mathcal{H}(\rho_s),\;  \partial_u^2\varphi_s \big\rangle ds.
\end{equation}
\end{definition}

\begin{definition}[Hyperbolic \ccl{Stefan problem} for exclusion]\label{def:entropy}
Fix a measurable initial profile $\rho^{\mathrm{ini}}: \bb{R} \rightarrow [0,1]$.  We say that a measurable function $\rho:(t,u)\in\R_+\times \R\mapsto\rho_t(u) \in [0,1]$ is  an \emph{entropy solution to the hyperbolic equation}
	\begin{equation}\label{ep:Hydro}
			\partial_t \rho + (2p-1) \partial_u \mathfrak{H} (\rho) = 0 \end{equation} with the initial condition $\rho_0 = \rho^{\mathrm{ini}}$
	if 
	\begin{enumerate}
		\item \emph{(entropy inequality)} for any non-negative test function  $\varphi \in C^{1,1}( \bb{R}_+ \times \bb{R})$ with compact support in $(0,\infty) \times \bb{R}$,  for any $0 \leq c \leq 1$,
		\[
			\int_0^\infty \langle|\rho_t - c|, \partial_t \varphi_t\rangle + (2p-1)\langle \mathfrak{q} (\rho_t;c,\mathfrak{H}), \partial_u \varphi_t\rangle dt \geq 0,
		\]
		where $\mathfrak{q} (\rho;c,\mathfrak{H}) = {\rm sign }(\rho - c) (\mathfrak{H}(\rho) - \mathfrak{H}(c))$ ; 
		\medskip

		\item \emph{(initial condition)} for any $A > 0$,
		\[
			\lim_{t \rightarrow 0}\, \int_{-A}^A\, |\rho_t (u)- \rho^{\mathrm{ini}} (u)|\,du = 0.
		\] 
	\end{enumerate}
\end{definition}

\begin{remark}[Uniqueness of solutions]\label{remark:uniqueness}
We refer the readers to \cite{Uchiyama} for the uniqueness of weak solutions to \eqref{eq:PDEstrong}, which is based on the fact that the function $\mathcal{H}$ is non-decreasing, and to \cite[Theorem 2.5.1]{malek1996} for the uniqueness of entropy solutions to \eqref{ep:Hydro}.
\end{remark}

We now state the main result about the macroscopic limit of the facilitated exclusion process, both in the symmetric and asymmetric cases.
\medskip

\begin{theorem}[Hydrodynamic limit for the  facilitated exclusion process]
\label{thm:ex}

Let us assume that the initial profile  $\rho^{\mathrm{ini}}: \bb{L} \rightarrow [0,1]$ is Riemann integrable on $\mathbb{L}$ and  \mm{bounded} away from 1, namely: $\rho^{\mathrm{ini}}(u)\leqslant \rho_\star < 1$ for any $u \in \bb{T}$.
\begin{enumerate}
\item[\emph{(I)}] \emph{In the symmetric case, $p=p'=1$:} 

 choose $\kappa=2$, and denote by $\rho_t(u)$ the unique weak solution of the \lj{parabolic} equation \eqref{eq:PDEstrong};
 \medskip
 
\item[\emph{(II)}] \emph{in the asymmetric case, $p'=1-p\in [0,\frac12)$:} 

assume moreover that $\rho^{\rm ini}$ is of bounded variation  on  $\R$, namely $\rho^{\rm ini}\in BV(\R)$,

choose $\kappa=1$, and denote by $\rho_t(u)$ the unique entropy solution of the hyperbolic equation \eqref{ep:Hydro};
\end{enumerate}
both with initial condition $\rho_0=\rho^{\mathrm{ini}}$. Then, for any $\varepsilon>0$, any test function $\varphi \in C^2(\bL)$ with compact support, and any $t>0$,
\[\limsup_{N\to\infty}\P_{\nu_N} \pa{\bigg|\frac{1}{N}\sum_{x\in \bL_N}\eta_x(t)\varphi (\tfrac xN)-\int_{\bL}\rho_t(u) \varphi (u)du\bigg|>\varepsilon}=0.\]
\end{theorem}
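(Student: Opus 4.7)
The plan is to reduce Theorem \ref{thm:ex} to the hydrodynamic limit for the FZRP stated in Theorem \ref{thm:zr}, transferring convergence through the exclusion/zero-range correspondence at both the microscopic and macroscopic levels. The uniform bound $\rho^{\mathrm{ini}}\leq \rho_\star <1$ guarantees that the asymptotic density of empty sites is bounded below, which keeps the target FZRP profile bounded and makes the associated change of variable well-defined.

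At the microscopic level, I couple $\eta(t)$ driven by $N^\kappa\genex$ with its pile-count $\omega(t)$, which as noted in the introduction evolves as a FZRP with rate $g(k)=\mathbbm{1}_{k\geq 2}$ on a lattice $\bL_M$ of size $M\simeq N\int_{\bL}(1-\rho^{\mathrm{ini}})$ (and on $\Z$ in the asymmetric case). Setting
\begin{equation*}
F(u) = \frac{1}{\int_\bL (1-\rho^{\mathrm{ini}})}\int_0^u (1-\rho^{\mathrm{ini}}(u'))\,du',
\end{equation*}
the empirical distribution of rescaled empty-site positions under $\nu_N$ concentrates on $dF$, so $\omega(0)$ fits the initial macroscopic profile
\begin{equation*}
\alpha^{\mathrm{ini}}(v) := \frac{\rho^{\mathrm{ini}}(F^{-1}(v))}{1-\rho^{\mathrm{ini}}(F^{-1}(v))}.
\end{equation*}
I must check that the image of the product Bernoulli law $\nu_N$ under the pile-count, although \emph{not} product, lies in the class of local-equilibrium measures covered by Theorem \ref{thm:zr}; this is precisely what motivates the non-product setting of that result. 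Once that is done, Theorem \ref{thm:zr} gives convergence of the empirical density of $\omega(t)$ to the weak (resp.\ entropy) solution $\alpha(t,v)$ of \eqref{eq:StefanZR} (resp.\ \eqref{eq:StefanZRasym}).

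I then define the candidate limit for FEP by inverting the mapping: letting $V(t,u):=\int_0^u (1-\rho(t,u'))\,du'$ be the macroscopic empty-site cumulative count, set $\rho(t,u):=\alpha(t,V(t,u))/(1+\alpha(t,V(t,u)))$. Using the identities $\mathcal{H}(\rho)=(\alpha-1)/\alpha$ and $\mathfrak{H}(\rho)=(1-\rho)\mathcal{H}(\rho)$ together with the change of variable $v=V(t,u)$, the PDE satisfied by $\alpha$ should transform into \eqref{eq:PDEstrong} (resp.\ \eqref{ep:Hydro}), so that $\rho$ is the unique weak (resp.\ entropy) solution by Remark \ref{remark:uniqueness}. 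The convergence of FEP empirical measures is obtained from the identity
\begin{equation*}
\frac1N\sum_{x\in\bL_N}\eta_x(t)\varphi(\tfrac{x}{N}) \;=\; \frac1N\sum_{i} \omega_i(t)\,\varphi(\tfrac{x_i(t)}{N}) + o(1),
\end{equation*}
where $x_i(t)$ is the position of the $i$-th empty site, combined with the FZRP hydrodynamics and the convergence of the rescaled positions $x_i(t)/N$ towards the inverse of $V(t,\cdot)$. The hard part will be the rigorous implementation of this macroscopic change of variables: because $\alpha$ is only a weak (resp.\ entropy) solution, with discontinuities across the free boundary $\{\alpha=1\}$ and possible shocks in the asymmetric case, the chain rule has to be justified via smooth approximations and stability under passage to the limit. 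In the asymmetric case this requires a careful treatment of Kruzkov entropies so that entropy inequalities are preserved by the time-dependent change of variable $v=V(t,u)$; this is what will force Sections \ref{sec:sym} and \ref{sec:asym} to diverge significantly, and why Theorem \ref{thm:zr} in Section \ref{sec:hydroZR} must accommodate the non-product initial data produced by the mapping.
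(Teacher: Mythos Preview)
Your proposal is correct and follows essentially the same route as the paper: couple the FEP to the FZRP at the microscopic level, apply Theorem~\ref{thm:zr} (whose non-product hypothesis is indeed motivated by the fact that the image of $\nu_N$ is not product), and transfer the result back through a macroscopic change of variables $v\leftrightarrow u$ that is made rigorous by smooth approximations of $\mathcal{H},\mathcal{G}$ (symmetric case) or vanishing-viscosity regularizations (asymmetric case). The one ingredient you underemphasize is the law of large numbers for the position of the \emph{tagged} first empty site (Lemmas~\ref{lem:TP} and~\ref{lem:holes}): your $V(t,u)$ is anchored at $0$, but the microscopic labeling follows a moving empty site, so the macroscopic map must read $v_t(u)=\theta^{-1}\int_{\chi_t}^u(1-\rho_t)$ with $\chi_t$ determined by the zero-range current through the origin, and in the symmetric case the FZRP is accelerated by $N^2\sim M^2/\theta^2$, producing the time factor $\theta^{-2}$ in $\alpha_{t\theta^{-2}}$.
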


This theorem is one of the main results of this paper, its proof will be concluded  in Section \ref{sec:conclsym} (symmetric case) and Section \ref{sec:conclasym} (asymmetric case).

\begin{remark}
Note that the symmetric case \emph{(I)} has already been proved in \cite{blondel2021stefan}, under slightly more general assumption on the initial profile $\rho^{\rm ini}$ which only needs to be Riemann  integrable. We propose here an alternative proof, simpler, but with an additional restriction on the initial data.
\end{remark}

\subsection{Hydrodynamic limit for the facilitated zero-range process}
\label{sec:ZR}

In this section we denote by $M\in\N$ the scaling parameter, in order to distinguish notations with respect to the exclusion process defined in the previous section.
We consider here the \emph{facilitated zero-range process} which  is a Markov process on the set of  configurations $ \omega \in \Gamma_M:=\N^{\bL_M}$, where  we recall that $\bL_M$ can be either $\T_M$ (in the symmetric case) or $\Z$ (in the asymmetric case). As before, we denote by $\bL$ its continuous limit, which is either the continuous torus $\T=\R/\Z$ or the infinite line $\R$. Since we will extensively be mapping, both at the microscopic and macroscopic level, zero-range process and exclusion process, in order to avoid confusion we use different notations: in particular  we use $y$ as the microscopic space variable for the zero-range process,  and therefore $\omega_y(t)\in \N$ is the number of particles present at site $y$ at time  $t$. Moreover, its  macroscopic space variable is denoted by $v\simeq y/M$.  

\medskip

The infinitesimal generator ruling the evolution in time of the \emph{facilitated zero-range process} is given by $\genzr$, which acts on  functions $f:\Gamma_M \to \R$ as 
\begin{equation}\label{eq:DefLM}
	\genzr f (\omega) := \sum_{y \in\bb{L}_M} \Big\{ p \mathbbm{1}_{\{\omega_y \geq 2\}} \big(f(\omega^{y,y+1}) - f(\omega)\big) 
	+ p^\prime  \mathbbm{1}_{\{\omega_{y+1} \geq 2\}} \big(f(\omega^{y+1,y}) - f(\omega)\big)\Big\},
\end{equation}
where $\omega^{y,y'}$ denotes the configuration obtained from $\omega$ by adding a particle at site $y'$ and removing one at site $y$, namely:
\[
\omega_z^{y,y'}=
\begin{cases}
\omega_y-1 & \mbox{ if } z=y,\\
\omega_{y'}+1 & \mbox{ if } z=y',\\
\omega_z & \mbox{ otherwise}.
\end{cases}
\]
In other words, if there are at least two particles at site $y \in \bL_M$, one of them jumps to the right (resp.~to the left) at rate $p$ (resp.~$p'$). As before, $p,p^\prime \in [0,1]$ are two parameters such that $p=p'$  corresponds to the symmetric case, while $p\neq p'$ to the asymmetric one. Note that, as will be extensively exploited later on, this zero-range process is \emph{attractive}, because the function $g(k)=\mathbbm{1}_{\{k \geq 2\}}$ is non-decreasing (see \cite[Chapter 2, Section 5]{klscaling}).

\medskip

As for the exclusion process, the zero-range process admits a critical density $\alpha_c=1$ which induces a phase separation. Here, the dynamical constraint entails that a particle that is alone on its site cannot move. Therefore, if the initial density of particles $\alpha$ is bigger than $1$, then particles will perform random jumps until there is at least one particle per site. If initially $\alpha <1$, then particles will perform random jumps until all sites are occupied by at most one particle. For the  facilitated zero-range process, a configuration $\omega$ is  \emph{ergodic} if, for any $y \in\bL_M$, $\omega_y \geqslant 1$, and is \emph{frozen} if, for any $y\in\bL_M$, $\omega_y \leqslant 1$. See Figure \ref{fig:setEFomega}.

\begin{center}
\begin{figure}[h]
\centering
\begin{tikzpicture}
\node at (1,0.5) {\color{white}x};
\draw (0,0) -- (4,0);
\foreach \i in {0,...,3}
{
\draw (\i+0.5,-0.1) -- (\i+0.5,0.1);
}
\node[circle,fill=black,inner sep=1mm] at (0.5,0.2) {};
\node[circle,fill=black,inner sep=1mm] at (0.5,0.5) {};

\node[circle,fill=black,inner sep=1mm] at (3.5,0.2) {};
\node[circle,fill=black,inner sep=1mm] at (2.5,0.8) {};
\node[circle,fill=black,inner sep=1mm] at (2.5,0.5) {};
\node[circle,fill=black,inner sep=1mm] at (1.5,0.2) {};
\node[circle,fill=black,inner sep=1mm] at (2.5,0.2) {};

\node[anchor=west] at (4.2,0.1) {$\omega$ is ergodic};

\draw (0,-1) -- (4,-1);
\foreach \i in {0,...,3}
{
\draw (\i+0.5,-1.1) -- (\i+0.5,-0.9);
}
\node[circle,fill=black,inner sep=1mm] at (1.5,-0.8) {};
\node[circle,fill=black,inner sep=1mm] at (2.5,-0.8) {};
\node[anchor=west] at (4.2,-0.9) {$\omega$ is frozen};

\draw (0,-2) -- (4,-2);
\foreach \i in {0,...,3}
{
\draw (\i+0.5,-2.1) -- (\i+0.5,-1.9);
}
\node[circle,fill=black,inner sep=1mm] at (3.5,-1.8) {};

\node[circle,fill=black,inner sep=1mm] at (2.5,-1.5) {};
\node[circle,fill=black,inner sep=1mm] at (2.5,-1.8) {};
\node[anchor=west] at (4.2,-1.9) {$\omega$ is transient and will become frozen};

\draw (0,-3) -- (4,-3);
\foreach \i in {0,...,3}
{
\draw (\i+0.5,-3.1) -- (\i+0.5,-2.9);
}
\node[circle,fill=black,inner sep=1mm] at (3.5,-2.8) {};
\node[circle,fill=black,inner sep=1mm] at (1.5,-2.8) {};
\node[circle,fill=black,inner sep=1mm] at (1.5,-2.5) {};
\node[circle,fill=black,inner sep=1mm] at (2.5,-2.5) {};

\node[circle,fill=black,inner sep=1mm] at (2.5,-2.8) {};
\node[anchor=west] at (4.2,-2.9) {$\omega$ is transient and will become ergodic};

\end{tikzpicture}
\caption{Example of configurations belonging to the ergodic, frozen and transient sets, with $M=4$  \ccl{sites in a periodic setting (lattice for the symmetric case)}.}
\label{fig:setEFomega}
\end{figure}

\end{center}

On the ergodic component, there is a unique family $\{\mu^\star_\alpha\}$ of invariant measures, parametrized by the density $\alpha >1$, which can be easily described as follows \lj{(see \cite{Andjel82})}: $\mu^\star_\alpha$ is a product of geometric probability measures whose marginal at each site takes values in $\N^*:=\N\setminus\{0\}$, and moreover
\begin{equation}
\label{eq:mualphastar}
\mu^\star_\alpha(\omega_y=k)=\mathbbm{1}_{\{k\in \N^*\}}\frac{1}{\alpha}\pa{1-\frac{1}{\alpha}}^{k-1}\quad \text{ for any } y \in \bL_M.
\end{equation}
Given an initial density profile $\alpha^{\rm ini}:\bL\to\R_+$,  we consider for the zero-range process an initial probability measure $\mu_M$  on $\Gamma_M$ fitting $\alpha^{\rm ini}$, \textit{i.e}.~such that for any $\varepsilon>0$ and any test function $\varphi \in C^2(\bL)$ with compact support,
\begin{equation}
\label{eq:initmeasure}
\lim_{M\to\infty} \mu_M \pa{\bigg|\frac{1}{M}\sum_{y\in \bL_M}\omega_y \varphi (\tfrac yM)-\int_{\bL}\alpha^{\rm ini} (v) \varphi (v)dv\bigg|>\varepsilon}=0.
\end{equation}
One possible example is to take $\mu_M$ as the product measure on $\Gamma_M$ with marginals given by
\begin{equation}\label{eq:muM}
\mu_M(\omega_y=k)=\mathbbm{1}_{\{k\in \N\}}\frac{1}{1+\alpha^{\mathrm{ini}}(\frac{y}{M})}\pa{1-\frac{1}{1+\alpha^{\mathrm{ini}}(\frac{y}{M})}}^k, \quad \text{ for any } y \in \bL_M.
\end{equation}
Note that $\mu_M(\omega_y=0)$ can be positive, in particular  the initial distribution $\mu_M$ is not a state of local \lj{equilibrium;} it allows for empty sites. In particular, the initial configuration is not assumed to be ergodic at the initial time.

Fix a time horizon $T>0$. Given a  probability measure $\mu$ on $\Gamma_M$ we denote by $ \{\omega(t)\}_{t\geqslant 0}$ the facilitated zero-range process started from the initial distribution $\mu$, and with generator $M^\kappa\genzr$, where  $\kappa \in \{1,2\}$ is the time scaling parameter defined as in \eqref{eq:kappa}. We denote by $\mathbf{P}_{\mu}$ the corresponding probability measure on the space  of trajectories $D([0,T],\Gamma_M)$.

For any $r\geq 0$, let us define the function
\begin{equation}
\label{eq:defG} 
\mathcal{G}(r)=\frac{r-1}{r}{\mathbbm{1} }_{\{r>1\}}.
\end{equation}
Given the function $\mathcal{G}$ the definition of weak and entropy solutions for the zero-range process are strictly analogous to Definitions \ref{Def:PDEweak} and \ref{def:entropy}:  a measurable function $\alpha_t(v)$ is a weak solution to 
\begin{equation}
\label{eq:PDEstrongZR}
\partial_t\alpha=\partial_v^2\mathcal{G}(\alpha),
\end{equation}
with initial condition $\alpha_0=\alpha^{\mathrm{ini}}$,  if for any test function  $\varphi \in C^{1,2}(\R_+\times \T)$ ,  any $t>0$, 
\begin{equation}
\label{eq:weakF1ZR}
\big\langle \alpha_t,\varphi_t \big\rangle = \big\langle \alpha^{\rm ini}, \varphi_0 \big\rangle+\int_0^t\big\langle \alpha_s,\partial_s\varphi_s \big\rangle ds +\int_0^t \big\langle \mathcal{G}(\alpha_s),\;  \partial_v^2\varphi_s \big\rangle ds.
\end{equation}

On the other hand, given a \emph{bounded} initial profile $\alpha^{\mathrm{ini}}$, an $L^\infty(\R_+\times\R)$--function  $\alpha_t(v)$ is an entropy solution to 
\begin{equation}
\label{zrp:Hydro}	
\partial_t\alpha+(2p-1)\partial_v\mathcal{G}(\alpha)=0 
\end{equation}
 if for any non-negative test function  $\varphi \in C^{1,1}(\R_+\times \R)$ with compact support in $(0,\infty)\times \R$, any $c \geq 0$, any $A>0$
		\[
			\int_0^\infty  \langle |\alpha_t - c |,\partial_t \varphi_t\rangle + (2p-1)\langle \mathfrak{q}\big(\alpha_t;c, \mathcal{G}\big),\partial_v \varphi_t\rangle dt \geq 0,
		\]
		and
		\begin{equation}
		\label{eq:initZR}
			\lim_{t \rightarrow 0}\, \int_{-A}^A\, |\alpha_t(v) - \alpha^{\mathrm{ini}} (v)|\,dv = 0,
		\end{equation} 
see  Definitions  \ref{Def:PDEweak} and \ref{def:entropy} above for the precise statements.

\begin{remark}
Note the following relation between functions $\mathcal G$, $\mathcal H$ and $\mathfrak H$ which will be useful in what  follows: 
\begin{equation}
\label{eq:relation}
\mathcal G(r)= \mathcal{H}\big(\tfrac{r}{1+r}\big)=(1+r)\mathfrak{H}\big(\tfrac{r}{1+r}\big).
\end{equation}
\end{remark}

We are now  ready to state the hydrodynamic limit result for the facilitated zero-range process, both in the symmetric and asymmetric cases.

\begin{theorem}[Hydrodynamic limit for the  facilitated zero-range process] 
\label{thm:zr}

Let us assume that the initial profile $\alpha^{\mathrm{ini}}:\bb{L}\to\R_+$ is bounded and Riemann integrable on $\bb{L}$, \lj{and that the initial measure $\mu_M$ satisfies \eqref{eq:initmeasure}}.
\begin{enumerate}
\item[\emph{(I)}] \emph{In the symmetric case, $p=p'=1$:} 

choose $\kappa=2$, and denote by $\alpha_t(v)$ the weak solution of the \lj{parabolic} equation \eqref{eq:PDEstrongZR};
\medskip

\item[\emph{(II)}] \emph{in the asymmetric case, $p'=1-p\in [0,\frac12)$:} 

choose $\kappa=1$, and denote by $\alpha_t(v)$ the entropy solution of the hyperbolic equation \eqref{zrp:Hydro};
\end{enumerate}
both with initial condition $\alpha_0=\alpha^{\mathrm{ini}}$. Then, for any $\varepsilon>0$, any test function $\varphi \in C^2(\bL)$ with compact support, and any $t>0$,
\begin{equation}\label{lln}
\lim_{M\to\infty}\mathbf{P}_{\mu_M} \pa{\bigg|\frac{1}{M}\sum_{y\in \bL_M}\omega_y(t)\varphi (\tfrac yM)-\int_{\bL}\alpha_t(v) \varphi (v)dv\bigg|>\varepsilon}=0.
\end{equation}
\end{theorem}

This theorem is the second main result of this paper, and its proof will be achieved in Section \ref{sec:hydroZR}. 

\begin{remark}
Note that the function $\mathcal{G}$ appearing in the hydrodynamic limit for the zero-range process is the same in both the symmetric and asymmetric cases, which is not the case for the exclusion process, for which the two functions $\mathcal{H}$ and $\mathfrak{H}$ (\emph{cf.}~\eqref{eq:defH}) are different. This is a specific feature of zero-range processes. 

To illustrate this point, fix a lattice gas, for which we denote $j_{x,x+1}=p c_{x,x+1}-p'c_{x+1,x}$ the microscopic current of particles along the edge $(x,x+1)$, $c_{x,y}$ denoting the rate function at which a particle jumps from $x$ to $y$. In the symmetric case $p=\frac12$, assuming the model is \emph{gradient}, there exists a function $h_x$ (depending on a finite number of coordinates) such that $j_{x,x+1}=h_x-h_{x+1}$.
The symmetric hydrodynamic limit ($p=\frac12$) is then formally given by 
\[\partial_t \rho =\partial_u^2\; \E_{\rho}[h_0],\]
where $\E_{\rho}$ stands for the expectation w.r.t.~the grand-canonical equilibrium distribution with density $\rho$. 

In the asymmetric case $p'=1-p\in [0,\frac12)$, the symmetric part of the current vanishes in the macroscopic limit, and what remains of the current at the macroscopic hyperbolic scale is $(2p-1)c_{x,x+1}$, therefore the asymmetric hydrodynamic limit is formally given by
\[\partial_t \rho =-(2p-1)\partial_u \E_{\rho}[c_{0,1}].\]
By definition, for zero-range processes, $h_x=c_{x,x+1}=g(\omega_x)$ (where $g:\N\to\R_+$ is the general rate function) so that the macroscopic quantities appearing in the symmetric and asymmetric hydrodynamic limits are the same.
\end{remark}

\subsection{Strategy of the proof}
\label{ssec:strategy}
Our strategy is the following: 
\begin{enumerate}
\item We construct a mapping at the microscopic level between the facilitated exclusion process $\{\eta(t)\}$ and the facilitated zero-range process $\{\omega(t)\}$ and we prove that it has good properties, namely that: 
\begin{itemize} 
\item it keeps all the information needed to deduce the hydrodynamic limit of $\{\eta(t)\}$ from the one of $\{\omega(t)\}$,

\item and moreover it corresponds to a macroscopic mapping between weak solutions of the hydrodynamic equations given in Theorem \ref{thm:ex} and \ref{thm:zr}. \end{itemize}
This is the purpose of Section \ref{sec:sym} (in the symmetric case) and Section \ref{sec:asym} (in the asymmetric case). 
\item We prove independently Theorem \ref{thm:zr}, namely the hydrodynamic limits for the facilitated zero-range process in both symmetric \emph{(I)} and asymmetric \emph{(II)} cases, adapting various tools in the literature and in particular making use of the attractiveness property (which the zero-range process \emph{has}, but the exclusion process \emph{does not have}). This is done in Section \ref{sec:hydroZR}.

\item Finally we can use the mapping construction to deduce Theorem \ref{thm:ex} from Theorem \ref{thm:zr}: we propose, first, an alternative proof of the hydrodynamic limit for exclusion in the symmetric case \emph{(I)}  (which has already been obtained in \cite{blondel2021stefan}) -- this is the purpose of Section \ref{sec:conclsym};  and second, a rigorous proof of the asymmetric case \emph{(II)}, which is new and was out of reach without this mapping construction -- this is the purpose of Section \ref{sec:conclasym}.
\end{enumerate}


\section{The symmetric case}  \label{sec:sym} 

Let us start with the symmetric case $p=p'=1$. In this section we assume that Theorem \ref{thm:zr} holds. It will be proved independently in Section \ref{sec:hydroZR}.  We start, both at the microscopic and macroscopic level, from the exclusion process, from which we will build the zero-range process, as well as the zero-range macroscopic profile.

\subsection{Mapping EX  $\mapsto$ ZR} For an exclusion configuration $\eta=(\eta_x)_{x\in \T_N}$, denote by \[M=M^\eta:=N-\sum_{x\in \T_N}\eta_x\] the number of empty sites in the configuration $\eta$.  We then denote by $X_1<X_2<\dots< X_M$ their positions, $X_1$ being the first empty site to the right of site $0$ (if it exists) in the configuration. We can then define, for any $y\in \{1,\dots, M\}$ 
\begin{equation}\label{eq:defmap}\widehat{\omega}^{\eta, X_1}_y = \text{ the number of particles between the $y$-th and $ (y+1)$-th empty sites in $\eta$},\end{equation}
where $M+1$ is identified with $1$. In other words: 
\begin{equation}\label{eq:defmapbis}\widehat{\omega}^{\eta, X_1}_y = X_{y+1}-X_y -1 \quad \mathrm{ mod } \; N \ \in \{1,\dots, N\}.\end{equation}
If there are no empty sites in $\eta$, we arbitrarily set $M=1$, $X_1=1$ and $\widehat{\omega}^{\eta, X_1}$ to be the degenerate configuration with $N$ particles on its unique site. See Figure \ref{fig:mapping} for an illustration of this mapping. 

\begin{center}
\begin{figure}[h]
\centering
\begin{tikzpicture}
\node at (1,0.5) {\color{white}x};
\draw (0,0) -- (8,0);
\foreach \i in {0,...,7}
{
\draw (\i+0.5,-0.1) -- (\i+0.5,0.1);
}
\node[circle,fill=black,inner sep=1mm] at (0.5,0.2) {};
\node at (0.5,-0.3) {$0$};
\node at (1.5,-0.3) {$X_1$};
\node at (4.5,-0.3) {$X_2$};
\node at (5.5,-0.3) {$X_3$};

\node[circle,fill=blue,inner sep=1mm] at (3.5,0.2) {};
\node[circle,fill=black,inner sep=1mm] at (6.5,0.2) {};
\node[circle,fill=blue,inner sep=1mm] at (2.5,0.2) {};
\node[circle,fill=black,inner sep=1mm] at (7.5,0.2) {};

\node[anchor=west] at (8.5,0) {$\eta$ };

\draw (2,-2) -- (5,-2);
\foreach \i in {2,...,4}
{
\draw (\i+0.5,-2.1) -- (\i+0.5,-1.9);
}
\node at (2.5,-2.3) {$1$};

\node at (3.5,-2.3) {$2$};
\node at (4.5,-2.3) {$3$};
\node[circle,fill=black,inner sep=1mm] at (4.5,-1.8) {};
\node[circle,fill=black,inner sep=1mm] at (4.5,-1.5) {};
\node[circle,fill=black,inner sep=1mm] at (4.5,-1.2) {};

\node[circle,fill=blue,inner sep=1mm] at (2.5,-1.5) {};
\node[circle,fill=blue,inner sep=1mm] at (2.5,-1.8) {};
\node[anchor=west] at (8.5,-2) {$\hat\omega^\eta$};

\end{tikzpicture}
\caption{An example of $\eta$ and its corresponding $\hat \omega^\eta$ in \lj{a periodic setting with $N=8$}.}
\label{fig:mapping}
\end{figure}
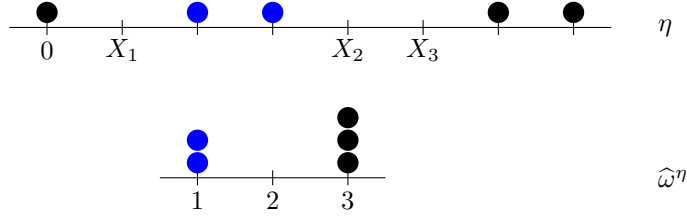

\end{center}

\begin{lemma}[Mapping] \label{lem:mapping} Assume that $\eta(t)$ is a trajectory of the Markov process with generator $N^2 \genex$ defined in \eqref{eq:DefLN},  with initial configuration $\eta(0) \in \Sigma_N$. Denote by $M=M^{\eta(0)}\leq N$ the initial number of empty sites. Due to the conservation law of the total number of particles, $M=M^{\eta(t)}$ does not depend on time $t$. Let us tag the first empty site to the right of site $0$ (if it exists) in $\eta(0)$, with initial position $X_1(0)$, and follow its evolution throughout the dynamics, denoting by $X_1(t)$ its position at time $t$. Then, the process 
\[\omega(t):=\widehat{\omega}^{\eta(t), X_1(t)}\] defined through \eqref{eq:defmap}, 
is a Markov process on the state space $\Gamma_{M}$, with generator $N^2 \genzr$ defined in \eqref{eq:DefLM}, and initial configuration $\omega(0):=\widehat{\omega}^{\eta(0), X_1(0)}$. If there are no empty sites in the initial configuration, the corresponding dynamics are trivial, and with the convention above $M=X_1(t)=1$, the result still holds.
\end{lemma}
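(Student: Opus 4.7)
\medskip

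\noindent\emph{Proof proposal.} My plan is to establish a bijective correspondence between the elementary transitions of $\eta$ and those of $\omega(t) = \widehat\omega^{\eta(t), X_1(t)}$, with matching rates; the Markov property of $\omega(t)$ and the identification of its generator as $N^2\genzr$ will then follow simultaneously.

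I would first check the well-posedness of the construction in time. Every elementary move in $\genex$ is a nearest-neighbour swap of $\eta$-values, so the number of empty sites is preserved and $M$ is a deterministic constant of motion. Moreover, an allowed jump requires $\{\eta_x,\eta_{x+1}\}=\{0,1\}$, so neighbouring empty sites can never merge nor cross one another, and the cyclic ordering $X_1(t) < X_2(t) < \cdots < X_M(t)$ is preserved for all $t$. The tagged trajectory $X_1(t)$ is therefore unambiguously defined as an a.s.\ right-continuous function of $t$, and so is $\omega(t)\in \Gamma_M$. The degenerate case where $\eta(0)$ has no empty sites is handled by the convention stated in the lemma, together with the observation that all rates $c_{x,x+1}(\eta)$ vanish identically in that case.

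The heart of the argument is then a configuration-by-configuration analysis of the effect of a single jump on $\omega$. Consider an allowed right jump of $\eta$ at the edge $(x,x+1)$, and write $X_k = x+1$ for the empty site being invaded. After the jump, $X_k$ slides one step to the left while all other $X_j$'s are unchanged. A direct inspection of \eqref{eq:defmapbis} shows that this decreases $\omega_{k-1}$ by one and increases $\omega_k$ by one, i.e.\ corresponds to a right zero-range jump between piles $k-1$ and $k$. The facilitated prefactor $\eta_{x-1}=1$ in the rate $c_{x,x+1}(\eta)$ is equivalent to $X_{k-1} < X_k - 2$, which reads exactly $\omega_{k-1}\geq 2$ -- precisely the kinetic constraint appearing in the right-jump term of $\genzr$. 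The rate $p$ matches on both sides, and a symmetric computation handles the allowed left jumps, yielding the rate $p'\mathbbm{1}_{\{\omega_k\geq 2\}}$. The special case where the moving empty site is the tagged one $X_1$ is absorbed by reading indices modulo $M$ on the ring $\T_M$, which is precisely the natural state space of the zero-range process.

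Once this edge-by-edge dictionary is in place, the pointwise identity
\[
\genex\bigl(f\circ \widehat\omega^{\cdot,X_1}\bigr)(\eta) \;=\; (\genzr f)\bigl(\widehat\omega^{\eta,X_1}\bigr)
\]
holds for every cylinder function $f:\Gamma_M\to \R$, with right-hand side depending on $\eta$ only through $\omega$. Dynkin's formula applied to the accelerated $\eta$-process then implies that $f(\omega(t))-f(\omega(0))-\int_0^t N^2 \genzr f(\omega(s))\,ds$ is a martingale for every cylinder $f$, which characterises $\omega(t)$ as the Markov process with generator $N^2\genzr$ started at $\omega(0)$. The only real bookkeeping subtlety is the cyclic relabelling of piles when the tagged site $X_1$ is itself moving; this is disposed of cleanly by the periodic structure of $\T_M$, so I do not expect any substantial obstacle in carrying the plan through.
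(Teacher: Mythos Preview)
Your proposal is correct and in fact spells out precisely the argument that the paper leaves implicit: the paper's own ``proof'' consists of the single sentence ``This result is straightforward, hence we do not detail the proof here.'' Your edge-by-edge dictionary between exclusion moves and zero-range moves, together with the observation that the facilitated constraint $\eta_{x-1}=1$ (resp.\ $\eta_{x+2}=1$) translates exactly into $\omega_{k-1}\geq 2$ (resp.\ $\omega_k\geq 2$), is the natural way to carry this out, and your treatment of the tagged empty site via the cyclic identification on $\T_M$ is the right way to absorb the boundary case.

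One small remark on presentation: the pointwise generator identity you write, $\genex\bigl(f\circ \widehat\omega^{\cdot,X_1}\bigr)(\eta) = (\genzr f)(\widehat\omega^{\eta,X_1})$, is slightly informal because $X_1$ is not a fixed parameter but evolves with $\eta$. The clean formulation is to view $(\eta(t),X_1(t))$ as a Markov process on the enlarged state space $\{(\eta,x)\in\Sigma_N\times\T_N:\eta_x=0\}$ (the exclusion configuration together with a tagged empty site), write down its generator, and then check that the map $(\eta,X_1)\mapsto \widehat\omega^{\eta,X_1}$ intertwines this generator with $\genzr$. You clearly have this in mind when you flag the ``cyclic relabelling'' subtlety, so this is only a matter of phrasing, not a gap.
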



This result is straightforward, hence we do not detail the proof here. In order for the mapping to be well defined at both scales, as stated in Theorem \ref{thm:ex}, we need the following assumption on the initial density profile.

\begin{assumption}
\label{ass:L1}
The initial density profile $\rho^{\rm ini}:\T\to[0,1]$ for the exclusion process is bounded away from $1$ on $\T$
\[\rho^{\mathrm{ini}}(u)\leq \rho_\star<1   \quad \text{ for any } u\in \T   .\]
\end{assumption}

Assume now that $\eta(0)$ is distributed according to $\nu_N$ given in \eqref{eq:nuN}, with $\rho^{\rm ini}$ satisfying Assumption \ref{ass:L1}. Therefore, the process $\omega(t) =\widehat{\omega}^{\eta(t), X_1(t)}$ is well defined. Moreover, the distribution of $\omega(0)=\widehat{\omega}^{\eta(0), X_1(0)}$ satisfies the following local equilibrium property:  define 
\[ \theta := \int_{\T} (1-\rho^{\rm ini}(u))du \qquad \text{and}\qquad v(u):= \theta^{-1} \int_0^u (1-\rho^{\rm ini}(u^\prime))du^\prime, \quad \text{for any $u\in\T$,}\]
and let $\alpha^{\rm ini}:\T\to\R_+$ be such that, for any $v=v(u) \in \T$,
\begin{equation} \alpha^{\rm ini}(v)=\frac{\rho^{\rm ini}(u)}{1-\rho^{\rm ini}(u)}. \label{eq:alpha-ini}\end{equation} Then, we have the following result.

\begin{lemma}[Initial zero-range density profile]
\label{lem:InitialLE} 
For any continuous test function $\varphi:\T\to\R$ and any $\delta >0$,
\begin{equation}\label{eq:initialLE}
		 \lim_{N\to\infty}\P_{\nu_N}\bigg( \bigg| \frac{1}{M} \sum_{y=1}^{M} \varphi\Big(\frac{y}{M}\Big) \omega_y(0) - \int_{\T}\varphi(v)\alpha^{\rm ini}(v)dv\bigg| > \delta \bigg)=0. 
\end{equation}
Note that we also have for any $t > 0$,
\begin{equation}\label{eq:theta}
\theta = \lim_{N\to\infty} M / N=\int_{\T}(1-\rho^{\rm ini}(u)) du = \int_{\T}(1-\rho_t(u)) du \qquad \text{$\nu_N$ -- almost surely}, 
\end{equation}
where $\rho_t (u)$ is the weak solution to \eqref{eq:PDEstrong}. More precisely, $\theta$ can be seen as the macroscopic deficit of mass in the exclusion process which does not depend on $t$ due to the conservation of the total number of particles.
\end{lemma}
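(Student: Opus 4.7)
\medskip

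\textbf{Proof plan.} The first identity~\eqref{eq:theta} is immediate: since $M = N - \sum_{x \in \T_N} \eta_x(0)$ is a sum of independent Bernoulli variables with means $\rho^{\rm ini}(x/N)$, a classical law of large numbers (Chebyshev's inequality plus Riemann integrability of $\rho^{\rm ini}$) yields $M/N \to \int_{\T}(1-\rho^{\rm ini}(u))du$ in $\nu_N$--probability. The time-independence $\int_{\T}(1-\rho_t)du = \int_{\T}(1-\rho^{\rm ini})du$ then follows by plugging the constant test function $\varphi \equiv 1$ into the weak formulation~\eqref{eq:weakF1}, for which both the time derivative term and the diffusion term vanish. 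Assumption~\ref{ass:L1} also guarantees that with $\nu_N$--probability tending to one the number of empty sites in any interval $[0,\epsilon N]$ is at least one (by another Bernoulli LLN, since $1-\rho^{\rm ini} \geq 1-\rho_\star > 0$), and therefore $X_1/N \to 0$ in probability.

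\medskip

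\textbf{Reformulation of the sum.} To prove~\eqref{eq:initialLE} the plan is to rewrite the zero-range sum in terms of the underlying exclusion variables. By~\eqref{eq:defmapbis}, $\omega_y(0)$ equals the number of particles strictly between the empty sites $X_y$ and $X_{y+1}$, hence
\begin{equation*}
\frac{1}{M}\sum_{y=1}^{M}\varphi\!\left(\frac{y}{M}\right)\omega_y(0) \; =\; \frac{1}{M}\sum_{x\in \T_N}\eta_x(0)\,\varphi\!\left(\frac{\Phi_N(x)}{M}\right),
\end{equation*}
where $\Phi_N(x) := \#\{\,y \in \{1,\ldots,M\} : X_y \in (X_1,x]\,\}$ is the cyclic count of empty sites in the arc from $X_1$ to $x$. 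The point is that for each site $x$ occupied by a particle, $\Phi_N(x)/M$ is exactly the ZR-label $y(x)/M$ of the pile containing that particle. I would then compare $\Phi_N(x)/M$ to the deterministic quantity $v(x/N)$ defined before Lemma~\ref{lem:InitialLE}. A uniform-in-$x$ law of large numbers (of Glivenko--Cantelli type for the empirical distribution function of the empty sites' positions, using again Chebyshev together with the Riemann integrability of $1-\rho^{\rm ini}$) combined with $X_1/N \to 0$ in probability gives
\[
\max_{x \in \T_N}\Big|\tfrac{\Phi_N(x)}{M}-v(\tfrac{x}{N})\Big| \xrightarrow[N\to\infty]{\P_{\nu_N}}0.
\]

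\medskip

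\textbf{Conclusion by change of variables.} Using uniform continuity of $\varphi$ on $\T$ together with the above uniform convergence, the sum above is, up to a $\nu_N$--negligible error, equal to $\tfrac{1}{M}\sum_{x \in \T_N}\eta_x(0)\,\varphi(v(x/N))$. Since this is again a linear statistic of independent Bernoulli variables, the usual LLN plus $N/M \to 1/\theta$ gives
\[
\frac{1}{M}\sum_{x \in \T_N}\eta_x(0)\,\varphi(v(\tfrac{x}{N})) \xrightarrow[N\to\infty]{\P_{\nu_N}} \frac{1}{\theta}\int_{\T}\rho^{\rm ini}(u)\,\varphi(v(u))\,du.
\]
Performing the change of variable $v=v(u)$, whose Jacobian is $dv = \theta^{-1}(1-\rho^{\rm ini}(u))du$, and using definition~\eqref{eq:alpha-ini} of $\alpha^{\rm ini}$, the right-hand side equals $\int_{\T}\varphi(v)\alpha^{\rm ini}(v)dv$, which proves~\eqref{eq:initialLE}. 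The main subtlety in this plan is really the uniform control $\max_x|\Phi_N(x)/M-v(x/N)| \to 0$, together with handling the cyclic ambiguity coming from the arbitrary starting point $X_1$; both issues are resolved thanks to Assumption~\ref{ass:L1}, which ensures a bounded-below density of empty sites and hence both the Glivenko--Cantelli-type uniform control and $X_1/N \to 0$.
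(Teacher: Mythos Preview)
Your proof is correct and takes a genuinely different route from the paper's.

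\textbf{Comparison.} For~\eqref{eq:initialLE}, the paper argues by compactness: it first shows tightness of the empirical measures $M^{-1}\sum_y \omega_y(0)\delta_{y/M}$, extracts a convergent subsequence with some limiting density $\alpha^{\rm ini}$, and then \emph{identifies} $\alpha^{\rm ini}$ by writing $\tfrac1N\sum_x \varphi(\tfrac xN)(1-\eta_x(0))=\tfrac1N\sum_y\varphi(X_y(0)/N)$, expressing $X_y(0)$ in terms of the $\omega$-variables, and passing to the limit. Uniqueness of the limit then upgrades subsequential to full convergence. Your approach is the ``direct'' one: you rewrite the zero-range sum as an exclusion sum with labels $\Phi_N(x)/M$, prove a uniform (Glivenko--Cantelli type) law of large numbers $\max_x|\Phi_N(x)/M-v(x/N)|\to 0$, and then conclude by a single LLN and change of variables. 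Your argument is more constructive and avoids the subsequence step entirely; the paper's compactness argument is softer and requires less quantitative control (no uniform LLN), but is less explicit. Both rely on the same key structural observation linking particle positions in $\eta$ to cumulative sums in $\omega$, just read in opposite directions.

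\textbf{One small point.} For~\eqref{eq:theta} the statement asserts $\nu_N$--\emph{almost sure} convergence, whereas Chebyshev alone yields only convergence in probability. The paper upgrades this via a fourth-moment bound and Borel--Cantelli; you should mention the same strengthening. (Also, your $\Phi_N(x)$ equals $y-1$ rather than $y$ for a particle in the $y$-th gap, but the resulting $O(1/M)$ shift inside $\varphi$ is of course harmless.)
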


\begin{remark}
Note that $\rho^{\rm ini} \leq \rho_\star < 1$ is bounded away from $1$ iff $\alpha^{\mathrm{ini}}$ is bounded uniformly in $v  \in \T$ with
\begin{equation}
\label{eq:assaslpha}
\alpha^{\mathrm{ini}}(v)\leq \alpha_\star := \frac{\rho_\star}{1 - \rho_\star}<\infty.
\end{equation}

\end{remark}

	\begin{proof}[Proof of Lemma \ref{lem:InitialLE}]
		We first prove \eqref{eq:theta}. Since 
		\[M^\eta = \sum_{x \in \bb{T}_N} (1 -\eta_x (0)),\]
		and the random variables $\{\eta_{x} (0),\,x \in \bb{T}_N\}$ are independent, there exists a finite constant $C$ independent of $N$ such that
		\begin{align*}
			\sum_{N \geq 1} \bb{P}_{\nu_N} \Big( \Big| \frac{M^\eta - \bb{E}_{\nu_N} [M^\eta] } N\Big| > N^{-1/8} \Big) 
			&\leq  \sum_{N \geq 1}  N^{-7/2} \bb{E}_{\nu_N} \big[ \big(M^\eta - \bb{E}_{\nu_N} [M^\eta]\big)^4 \big]   \\ 
			& \leq  C \sum_{N \geq 1} N^{-3/2} < \infty.
		\end{align*}
		From Borel-Cantelli's Lemma, this implies
		\[\lim_{N \rightarrow \infty} \frac1N\big(M^\eta - \bb{E}_{\nu_N} [M^\eta] \big) = 0 \qquad \text{$\nu_N$ -- a.s.} \]
		This proves the second identity in \eqref{eq:theta} since, after a straightforward computation using the definition \eqref{eq:nuN} of $\nu_N$, one gets
		\[\lim_{N \rightarrow \infty} \frac1N\bb{E}_{\nu_N} [M^\eta] = \int_{\T} (1- \rho^{\rm ini} (u)) du. \] 
		Besides, the last identity in \eqref{eq:theta} is due to the conservation of number of particles in the exclusion process.
		
		It remains to prove \eqref{eq:initialLE}. It is easy to check  that  the sequence of measure-valued random variables
		\[M^{-1} \sum_{x \in \bb{T}_M}  \omega_y (0) \delta_{y/M}, \quad \text{indexed by }N \geq 1,\]
		is tight \lj{since the distribution of $\omega_y (0)$ is stochastically bounded by the geometric distribution with parameter $\alpha_\star$}.  Moreover, any weak limit of the sequence is concentrated on trajectories which are absolutely continuous with respect to the Lebesgue measure. Now we pick a convergent subsequence denoted by $N^\prime$. Then there exists $\alpha^{\rm ini}: \T \rightarrow \R_+$ such that for every $\varphi \in C(\T)$,
		\[\lim_{N^\prime \rightarrow \infty} \frac{1}{M} \sum_{y \in \bb{T}_M} \varphi \big(\tfrac{y}{M} \big) \omega_0 (y) = \int_{\T} \varphi(v) \alpha^{\rm ini} (v)\,dv \quad \text{in $\P_{\nu_{N^\prime}}$ -- probability.}\]
		Observe that for every $N$,
		\begin{align*}
			\frac{1}{N} \sum_{x \in \bb{T}_{N}} \varphi \big(\tfrac{x}{N}\big) (1 - \eta_x (0)) &= \frac{1}{N} \sum_{y \in \bb{T}_M}  \varphi \Big(\frac{X_y(0)}{N}\Big)\\
			&= \frac{1}{N} \sum_{y \in \bb{T}_M}  \varphi \Big(\frac{1}{N} \sum_{y^\prime = 1}^{y-1}  [ \omega_{y'} (0) + 1] + \frac{X_1(0)}{N}\Big).
		\end{align*}
		Letting $N^\prime \rightarrow \infty$, we obtain (using \eqref{eq:theta})
		\[\int_{\T} \varphi (u) (1 - \rho^{\rm ini} (u)) du = \theta \int_{\T} \varphi \Big(\theta \int_0^v (1 + \alpha^{\rm ini} (v^\prime) )  d v^\prime\Big) dv
		= \int_{\T} \varphi(u) (1+\alpha^{\rm ini} (v))^{-1} du,\]
		where
		\[u(v) := \theta \int_0^v (1 + \alpha^{\rm ini} (v^\prime) )  d v^\prime.\]
		Therefore,
		\[\alpha^{\rm ini} (v) = \frac{\rho^{\rm ini} (u)}{1 - \rho^{\rm ini} (u)}, \qquad  \text{and}\qquad v(u) = \theta^{-1} \int_{0}^u (1 - \rho^{\rm ini} (u^\prime))\,du^\prime.\]
		This is enough to conclude the proof since $\alpha^{\rm ini}$ is uniquely determined by $\rho^{\rm ini}.$
	\end{proof}

This mapping intrinsically depends on the position of the tagged empty site in the exclusion configuration. However, its position \lj{can also be tracked} in the zero-range configuration, where it is given by the total particle current crossing the first edge of the system. For what follows, we need access to the macroscopic counterpart of this quantity in order to define the mapping at the macroscopic scale. 

\begin{lemma}[Law of large numbers for the first tagged empty site]
\label{lem:TP}  Let $\rho^{\rm ini}:\T\to[0,1]$  satisfy Assumption \ref{ass:L1} and $\alpha^{\rm ini}$ be defined as in \eqref{eq:alpha-ini}. 
Consider the weak solution  $\rho_t(u)$, resp.~$\alpha_t(v)$ of the \lj{parabolic} equation \eqref{eq:PDEstrong} with initial condition $\rho^{\rm ini}$ , resp.~\eqref{eq:PDEstrongZR} with initial condition $\alpha^{\rm ini}$ (see Definition \ref{Def:PDEweak}).

Then, for any $t\geq 0$, for any $\epsilon>0$,
\[\lim_{N\to\infty} \P_{\nu_N} \bigg(\bigg|\frac{X_1(t)}{N}-\chi_t\bigg|>\epsilon\bigg)=0\]
 where  $\chi_t$ is defined explicitly as a function of $\alpha$ by
\begin{equation}\label{eq:chi}\chi_t:=\theta \big\langle v,\alpha_{t  \theta^{-2} }-\alpha^{\mathrm{ini}}\big\rangle = \theta \int_\T v(\alpha_{t\theta^{-2}}-\alpha^{\rm ini})(v)dv,\end{equation}
and implicitly as a function of $\rho$ as the solution of
\begin{equation}\label{eq:chibis}\int_0^{\chi_t}(1-\rho_t(u))du=\big\langle u,\rho_{t}-\rho^{\mathrm{ini}}\big\rangle = \int_\T u(\rho_t-\rho^{\rm ini})(u)du.\end{equation}
In the identities above, the notation $u,\, v$ represents the identity functions $u\mapsto u$ and  $v\mapsto v$ on $\T$.
\end{lemma}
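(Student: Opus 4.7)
The plan is to relate, through the microscopic mapping of Lemma~\ref{lem:mapping}, the motion of the tagged empty site to a ZR current across a specific edge, and then to combine an Abel summation identity with the ZR hydrodynamic limit (Theorem~\ref{thm:zr}) to conclude. Tracking the effect of an elementary admissible $\eta$-jump near $X_1(t)$, one checks that a jump from $X_1(t)-1$ to $X_1(t)$ (respectively from $X_1(t)+1$ to $X_1(t)$) corresponds, under the mapping of Lemma~\ref{lem:mapping}, to a ZR particle moving from pile $M$ to pile $1$ (respectively from pile $1$ to pile $M$), and simultaneously shifts $X_1$ by $-1$ (respectively by $+1$). Lifting $X_1(t)$ canonically to $\Z$ and denoting by $J_1(t)$ the signed ZR current across the edge $(M,1)$ in the shifting-label convention inherent to Lemma~\ref{lem:mapping}, this yields the key identity
\[
X_1(t)-X_1(0)\;=\;-J_1(t).
\]

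Next, the process $\omega(t)$ has generator $N^2\genzr$ on $\T_M$ and $M/N\to\theta$ by Lemma~\ref{lem:InitialLE}, so it is accelerated by a factor $\theta^{-2}$ relative to its natural diffusive scale $M^2$; combined with the initial convergence from the same Lemma, Theorem~\ref{thm:zr}~(I) gives convergence of its empirical measure to $\alpha_{t\theta^{-2}}(v)\,dv$ in $\P_{\nu_N}$-probability. Starting from the microscopic conservation $\omega_y(t)-\omega_y(0)=J_y(t)-J_{y+1}(t)$ and performing a summation by parts on the torus against the linear test sequence $y=1,\dots,M$ (with $J_{M+1}=J_1$ by periodicity), one obtains
\[
M\,J_1(t)\;=\;\sum_{y=1}^M J_y(t)\;-\;\sum_{y=1}^M y\,\bigl(\omega_y(t)-\omega_y(0)\bigr).
\]
Dividing by $M^2$ and invoking the hydrodynamic limit for $\omega$, the second term on the right-hand side converges in probability to $\langle v,\alpha_{t\theta^{-2}}-\alpha^{\mathrm{ini}}\rangle$.

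To control the first term, observe that in the symmetric case $p=p'=1$ the total flux $\sum_y J_y(t)$ is a mean-zero martingale, since the right and left jump rates of $\genzr$ coincide; its expected quadratic variation equals the expected number of ZR jumps on $[0,t]$, which is at most $2MN^2 t$. A standard Chebyshev argument then gives $M^{-2}\sum_y J_y(t)\to 0$ in probability as $N\to\infty$. Combining the two estimates, $J_1(t)/M\to -\langle v,\alpha_{t\theta^{-2}}-\alpha^{\mathrm{ini}}\rangle$ in $\P_{\nu_N}$-probability; since Assumption~\ref{ass:L1} forces the first empty site to lie at distance $O(1)$ from $0$ with high probability, $X_1(0)/N\to 0$, so that
\[
\frac{X_1(t)}{N}\;=\;\frac{X_1(0)}{N}-\frac{M}{N}\cdot\frac{J_1(t)}{M}\;\longrightarrow\;\theta\,\langle v,\alpha_{t\theta^{-2}}-\alpha^{\mathrm{ini}}\rangle\;=\;\chi_t
\]
in $\P_{\nu_N}$-probability, which is exactly \eqref{eq:chi}.

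The equivalent implicit characterization \eqref{eq:chibis} is then a purely analytic consequence of the macroscopic change of variables $v(u)=\theta^{-1}\int_0^u(1-\rho_t(u'))du'$ together with the pointwise relation $\alpha_{t\theta^{-2}}(v(u))=\rho_t(u)/(1-\rho_t(u))$, which are the macroscopic counterparts of the mapping EX$\,\to\,$ZR and will be formalised when deducing Theorem~\ref{thm:ex} from Theorem~\ref{thm:zr}. The main technical obstacle in this strategy is the control of the total winding $M^{-2}\sum_y J_y(t)$: the martingale bound above relies crucially on the symmetry $p=p'$ in $\genzr$, and this is precisely why the asymmetric case treated in Section~\ref{sec:asym} will require a genuinely different treatment.
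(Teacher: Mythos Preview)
Your decomposition via summation by parts and the martingale control of $M^{-2}\sum_y J_y(t)$ are essentially the same as the paper's argument. The genuine gap is the step where you ``invoke the hydrodynamic limit for $\omega$'' to conclude that $M^{-2}\sum_{y=1}^M y\,(\omega_y(t)-\omega_y(0))\to \langle v,\alpha_{t\theta^{-2}}-\alpha^{\mathrm{ini}}\rangle$. The test function $G(v)=v$ is not in $C^2(\T)$; it is not even continuous on the torus, since it jumps from $1$ to $0$ at the identified endpoints. Theorem~\ref{thm:zr} is stated for $C^2$ test functions only, so it cannot be applied directly to this sum.

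The paper closes precisely this gap: it replaces $G(v)=v$ by $G_\varepsilon\in C^2(\T)$ with $G_\varepsilon=G$ on $[\varepsilon,1-\varepsilon]$, applies Theorem~\ref{thm:zr} to $G_\varepsilon$, and then bounds the residual boundary contribution $\frac{2}{M}\sum_{y\in[(1-\varepsilon)M,\varepsilon M]}(\omega_y(t)+\omega_y(0))$ by $O(\varepsilon)$. That last bound is obtained by stochastically dominating $\omega(t)$ with an equilibrium zero-range process of constant density $1+\alpha_\star$, which is where attractiveness enters. Without some uniform control on the local mass of $\omega(t)$ near the discontinuity of $G$, your passage to the macroscopic integral is unjustified; a large concentration of $\omega$-particles near $y\approx M$ could in principle produce an order-one discrepancy between the discrete sum against $y/M$ and the limiting integral against $v$.
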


\begin{remark} Note that one could formally write by integration by parts
\begin{equation}
\label{eq:dtustar}
\frac{d}{dt}\chi_t=\theta^{-1}\partial_v \mathcal{G}(\alpha_{t  \theta^{-2}}) (0)=\frac{1}{1-\rho_t(\chi_t)}\partial_u \mathcal{H} (\rho_t )(\chi_t).
\end{equation}
However, some work is required to give sense to these quantities, given the weak notion of solutions of the \lj{parabolic} \ccl{Stefan problem}s \eqref{eq:PDEstrong} and \eqref{eq:PDEstrongZR}. For this reason, we settle for the definitions \eqref{eq:chi} and \eqref{eq:chibis}, which are well defined without having to prove any regularity property for the macroscopic profiles. See also Section \ref{sec:macromap} below.
\end{remark}

\begin{remark}In fact, Assumption \ref{ass:L1} is not \emph{a priori} required for Lemma \ref{lem:TP} to hold, except to guarantee that an empty site exists in the exclusion configuration. However, this ensures that the initial value for $\chi_t$ is $\chi_0=0$, and since Assumption \ref{ass:L1} will be required throughout anyway, we make it here as well.\end{remark}

\begin{proof}[Proof of Lemma \ref{lem:TP}]
We first assume that $\{\eta^N,\,N \geq 1\}$ is a \emph{deterministic} sequence of configurations such that 
\[\lim_{N\to\infty} \frac{1}{N} \sum_{x\in \T_N} \eta^N_x \delta_{x/N} (du) = \rho^{\rm ini} (u) du \]
under the weak topology (where $\delta_a(du)$ is the Dirac measure concentrated at  $a \in \T$), we are going to prove that, for any $\epsilon > 0$,
\begin{equation}\label{determLLN}
	\lim_{N\to\infty} \bb{P}_{\eta^N} \bigg(\bigg|   \frac{X_1 (t) - X_1 (0)}{N} - \chi_t  \bigg| > \epsilon\bigg) = 0.
\end{equation}
 Recall that we denote by $M=M_{\eta^N}$ the number of empty sites in $\eta^N$.
Let us couple as described in Section \ref{sec:ZR} the microscopic exclusion process starting from $\eta^N$ with a zero-range process $\omega$ with generator  $N^2 \genzr$.
Let $J_{y,y+1}^{\omega} (t)$ be the net number of particles crossing the bond $(y,y+1)$ up to time $t$ throughout the evolution of $\omega$, which means 
\begin{equation}
\label{eq:current}\omega_y (t) - \omega_y (0)=J_{y-1,y}^\omega(t)-J_{y,y+1}^\omega(t).
\end{equation}
Under the microscopic mapping between exclusion and zero-range process,  one can see that \[X_1 (t) - X_1 (0)=- J_{0,1}^{\omega} (t).\] Therefore, we only need to prove that for any $\epsilon > 0$,
 \[
	\lim_{N\to\infty} \bb{P}_{\eta^N} \bigg(\bigg|  \frac{J_{0,1}^{\omega} (t)}{M} + \frac{\chi_t}\theta \bigg| > \epsilon\bigg) = 0. 
 \]
Let $G (v) = v$,  for $v\in (0,1]$, from \eqref{eq:current} we have 
 \begin{equation}\label{eq:new}
 	-J_{0,1}^{\omega} (t) = -\frac{1}{M}\sum_{\ccl{y}=1}^M J_{y,y+1}^{\omega} (t) + \sum_{y=1}^M G (\tfrac{y}{M}) \big( \omega_y (t) - \omega_y (0)  \big).
 \end{equation}
\lj{Since 
\[J_{y,y+1}^{\omega} (t) -  \int_0^t \big[\ccl{{\mathbbm 1}_{\{\omega_y(s)\geq 2\}} - {\mathbbm 1}_{\{\omega_{y+1}(s)\geq 2\}}} \big] ds\]
is a martingale, whose quadratic variation is the number of jumps across the bond $(y,y+1)$ before time $t$, the first term in the right hand side of \eqref{eq:new} is also a martingale,}
whose quadratic variation is given by $M^{-2}N_t$, $N_t$ being the total number of jumps occurring in the system before time $t$, and whose expectation is therefore bounded by $CM$ for some positive constant $C$, because jumps occur at most at rate $2M^2$ at each site. 
In particular, we can write 
 \begin{equation}
 	\frac{1}{M}\sum_{y=1}^M G (\tfrac{y}{M}) \big( \omega_y (t) - \omega_y (0)  \big)+\frac{J_{0,1}^{\omega} (t)}{M} = \mathscr{M}^M_t,
 \end{equation}
where $\mathscr{M}^M_t$ is a martingale with quadratic variation bounded by $ 1/M$.

Since the function $G$ is not a smooth function on the torus, in order to use the hydrodynamic limit for the zero-range process, we approximate it by $G_\epsilon$ such that for any $\varepsilon>0$, $G_\epsilon \in C^2(\T)$  and the following holds: 
\[
G_\epsilon(v)\in [0,1], \quad \text{for any }v\in \T, \qquad \text{and}  \qquad G_\epsilon(v)=v, \quad \text{for any } v\in [\epsilon, 1-\epsilon].
\]
We can now write
\[
\Big| \frac{J_{0,1}^{\omega} (t)}{M} +\frac{1}{M} \sum_{y=1}^M G_{\epsilon} (\tfrac{y}{M}) \big( \omega_y (t) - \omega_y (0)  \big) \Big| \leq  |\mathscr{M}^M_t|+ \frac{2}{M} \sum_{y=[(1-\epsilon) M]}^{[\epsilon M]}   \big( \omega_y (t) + \omega_y (0)  \big).
\]
The zero-range process being stochastically dominated by an equilibrium zero-range process with constant density $1+\alpha_\star = (1-\rho_\star)^{-1}$ by attractiveness, the second  term in the right hand side is of order $C\varepsilon$. Thanks to the bound on the quadratic variation of the martingale $\mathscr{M}^M_t$, we can therefore write, \lj{for any $\delta > 0$,}
\[
	\limsup_{\epsilon \rightarrow 0}\, \limsup_{N \to\infty}\, \bb{P}_{\eta^N} \Big(  \Big|  \frac{J_{0,1}^{\omega} (t)}{M} + \frac{1}{M} \sum_{y=1}^M G_{\epsilon} (\tfrac{y}{M}) \big( \omega_y (t) - \omega_y (0)  \big) \Big| > \delta \Big) = 0.
\]
By maximum principle \ccl{(see e.g. \cite[Theorem 1.1, p.40]{Andreucci04})}, for any $0\leq \varepsilon<\delta (1+\alpha_\star)/6$, we have $\big \langle |G_\varepsilon-G|,\alpha_{t \theta^{-2}}-\alpha^{\mathrm{ini}}\big\rangle\leq \delta/3$. We can therefore write for any such $\varepsilon$,
\begin{align*}
\bb{P}_{\eta^N}& \Big(  \Big|  \frac{J_{0,1}^{\omega} (t)}{M} + \big\langle G,\alpha_{t \theta^{-2}}-\alpha^{\mathrm{ini}}\big\rangle \Big| > \delta \Big)\\&\leq \bb{P}_{\eta^N} \Big(  \Big|  \frac{J_{0,1}^{\omega} (t)}{M} + \frac{1}{M} \sum_{y=1}^M G_{\epsilon} (\tfrac{y}{M}) \big( \omega_y (t) - \omega_y (0)  \big) \Big| > \delta/3 \Big)\\
&+\bb{P}_{\eta^N} \Big(  \Big|  \frac{1}{M} \sum_{y=1}^M G_{\epsilon} (\tfrac{y}{M}) \big( \omega_y (t) - \omega_y (0)  \big) -  \big\langle G_\varepsilon,\alpha_{t \theta^{-2}}-\alpha^{\mathrm{ini}}\big\rangle \Big| > \delta/3 \Big).
\end{align*}
By Theorem \ref{thm:zr}, for any fixed $\varepsilon>0,$ the second term in the right hand side vanishes as $N \to\infty$. As $\varepsilon$ then goes to $0$, the other two terms vanish as well by the dominated convergence Theorem, which concludes the proof of \eqref{determLLN}.  The time factor $\theta^{-2}$ is a consequence of the fact that the zero-range process is accelerated by a factor $N^2=\theta^{-2}M^2+o(N^2)$ and not $M^2$.

We now extend \eqref{determLLN} to random initial configuration distributed under $\nu_N$, $N \geq 1$ (cf. \eqref{eq:nuN}) by using Skorohod's representation Theorem -- see \cite[Theorem 1.6.7]{billingsley2013convergence}.  Indeed, since 
\[\lim_{N\to\infty} \frac{1}{N} \sum_{x\in \T_N} \eta_x \delta_{x/N}(du) = \rho^{\rm ini} (u) du \quad \text{in $\nu_N$ -- probability}\]
under the weak topology, we can construct a sequence of random elements $\{\eta^N,\,N\geq 1\}$ on a common probability space $(\Sigma, \mc{F}, \nu^\star)$ such that $\eta^N (\sigma)$ has distribution $\nu_N$, and such that
\[\lim_{N\to\infty} \frac{1}{N} \sum_{x\in \T_N} \eta^N_x (\cdot) \delta_{x/N}(du) = \rho^{\rm ini} (u) du, \qquad \text{$\nu^\star$ -- a.s.}\]
under the weak topology. Then for any $\varepsilon > 0$,
\begin{align*}
	\lim_{N\to\infty} \bb{P}_{\nu^N} \Big(\Big|   \frac{X_1 (t) - X_1 (0)}{N} - \chi_t  \Big| > \varepsilon\Big) 
	&= \lim_{N\to\infty} \int \nu^N (d \eta) \bb{P}_{\eta} \Big(\Big|   \frac{X_1 (t) - X_1 (0)}{N} - \chi_t  \Big| > \varepsilon\Big)\\
	&=  \lim_{N\to\infty} \mathsf{E}_{\nu^\star} \bigg[ \bb{P}_{\eta^N (\cdot)} \Big(\Big|   \frac{X_1 (t) - X_1 (0)}{N} - \chi_t  \Big| > \varepsilon\Big) \bigg] \\&= 0,
\end{align*} where we denote by $\mathsf{E}_{\nu^\star}$ the expectation with respect to $\nu^\star$. 
Above, the last equality comes from \eqref{determLLN} and the dominated convergence Theorem. We conclude the proof by noting that $\lim_{N\to\infty} X_1 (0) / N = 0$ in $\nu_N$ -- probability.
\end{proof}

Before concluding this section we settle for a very crude bound on the zero-range density.
\begin{lemma}[Bound on the zero-range density]\label{lem:ZRdensity}
Under Assumption \ref{ass:L1}
\[\lim_{N \to\infty}\P_{\nu_N}\pa{\max_{\substack{y\in \T_M\\
t\leq TM^2}}\omega_y(t)\geq \log^2 M}=0.\]
\end{lemma}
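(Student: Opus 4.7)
The argument combines a joint stochastic dominance of $\omega(0)$ by the stationary product measure $\mu^\star_{1+\alpha_\star}$ (recall $\alpha_\star=\rho_\star/(1-\rho_\star)$, see \eqref{eq:assaslpha}), the basic coupling for the attractive FZRP, and a union bound exploiting stationarity.

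The key step is to produce a coupling at $t=0$ in which $\omega_y(0)\leq\tilde\omega_y$ pointwise for every $y$, with $(\tilde\omega_y)_{y\in\T_M}$ an i.i.d.\ sequence of common law $\mu^\star_{1+\alpha_\star}$. Although $(\omega_y(0))_y$ is not a product measure, the sequence of empty-site indicators under $\nu_N$ is Markovian in the spatial variable, and the product structure of $\nu_N$ yields, conditionally on the position $X_y$ of the $y$-th empty site,
\[
\P_{\nu_N}\pa{\omega_y(0) \geq j \mid X_y} = \prod_{i=1}^{j} \rho^{\rm ini}\pa{\tfrac{X_y+i}{N}} \leq \rho_\star^{j}
\]
by Assumption \ref{ass:L1}. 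Writing $F_{X_y}$ for the c.d.f.\ of $\omega_y(0)\mid X_y$ and $G(k)=1-\rho_\star^{k}$ for $k\geq 1$ (the c.d.f.\ of the marginal of $\mu^\star_{1+\alpha_\star}$), this rewrites as $F_{X_y}(k)\geq G(k)$ for every $k$. Driving a sequential inverse-c.d.f.\ coupling by fresh independent uniforms $(U_y)_{y\geq 1}$ via $\omega_y(0):=F_{X_y}^{-1}(U_y)$ and $\tilde\omega_y:=G^{-1}(U_y)$ then forces $\omega_y(0)\leq\tilde\omega_y$ pointwise, while the $\tilde\omega_y$, each being a measurable function of the single uniform $U_y$, are i.i.d.\ with law $\mu^\star_{1+\alpha_\star}$.

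The attractiveness of the FZRP (noted after \eqref{eq:DefLM}, since $g(k)=\mathbbm{1}_{k\geq 2}$ is non-decreasing) propagates this ordering to the whole trajectory via the basic coupling of \cite[Chapter 2]{klscaling}: if $\tilde\omega(t)$ denotes the FZRP on $\T_M$ started from $(\tilde\omega_y)$, then $\omega_y(t)\leq\tilde\omega_y(t)$ for all $y$ and $t\geq 0$. Since $\mu^\star_{1+\alpha_\star}$ is stationary for the FZRP, the marginal of $\tilde\omega_y(t)$ equals $\mu^\star_{1+\alpha_\star}$ for every $t$, so that
\[
\P\pa{\tilde\omega_y(t) \geq k} = \rho_\star^{k-1}, \qquad k\geq 1,\ t\geq 0.
\]

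To upgrade this fixed-time bound to a supremum over $(y,t)$, I discretize $[0,TM^2]$ on a grid of mesh $1$ and apply stationarity and a union bound over sites at each grid point; between grid points, the number of jumps affecting any fixed site is Poisson of mean at most $3$, so the event that $\tilde\omega_y$ grows by more than $\tfrac12\log^2 M$ during a single unit interval has probability bounded by $e^{-c(\log M)^2}$ via the standard Poisson tail. A union bound over the $M\cdot TM^2$ pairs (site, grid point) yields
\[
\P_{\nu_N}\pa{\max_{\substack{y\in\T_M\\ t\leq TM^2}}\omega_y(t)\geq \log^2 M}
\leq C\, T\, M^{3}\pa{\rho_\star^{(\log M)^2/2 - 1} + e^{-c(\log M)^2}},
\]
which vanishes as $M\to\infty$ since both factors decay faster than any polynomial in $M$. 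The main technical point is the joint dominance at time $0$: marginal dominance alone is insufficient because the coordinates of $\omega(0)$ are correlated under $\nu_N$, and the sequential inverse-c.d.f.\ coupling based on the Markov structure of the empty-site process is the essential new ingredient; once this is in place, the attractiveness and Poisson tail estimates are routine.
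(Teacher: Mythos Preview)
Your overall strategy matches the paper's sketch: dominate $\omega(0)$ by the stationary measure $\mu^\star_{1+\alpha_\star}$, propagate the order through the basic coupling, and finish with a union bound over space and time. Your time-grid plus Poisson-tail device is a legitimate alternative to the paper's passage to the discrete-time skeleton.

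The construction of the initial joint domination, however, has two genuine gaps stemming from the torus geometry and the randomness of $M$. First, the sequential inverse-c.d.f.\ step only delivers $\omega_y(0)=F_{X_y}^{-1}(U_y)\le G^{-1}(U_y)$ for $y=1,\dots,M-1$: the last gap $\omega_M(0)=(N-X_M)+(X_1-1)$ wraps around the torus and is \emph{determined} by the earlier variables, not generated from a fresh uniform, so the inequality $\omega_M(0)\le\tilde\omega_M$ is not established. Second, and more delicate, the lattice size $M$ is itself a function of $(X_1,U_1,U_2,\dots)$---it is the step at which your sequential construction exhausts $\T_N$---so conditionally on $M=m$ the uniforms $(U_1,\dots,U_m)$ are no longer i.i.d.\ uniform, and hence $(\tilde\omega_1,\dots,\tilde\omega_m)$ is \emph{not} distributed as $(\mu^\star_{1+\alpha_\star})^{\otimes m}$ on $\T_m$. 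Your stationarity assertion ``the marginal of $\tilde\omega_y(t)$ equals $\mu^\star_{1+\alpha_\star}$ for every $t$'' therefore does not follow. Both issues can be repaired---for instance by building the dominating configuration from uniforms \emph{independent} of $\eta$ (so that it is genuinely product conditionally on $M$) and controlling the resulting positive discrepancies $(\omega_y(0)-\tilde\omega_y)^+$ via a second-class-particle argument, or by working on the high-probability event $\{|M-\theta N|\le N^{3/4}\}$ with a slightly inflated stationary density---but neither fix is automatic, and as written the coupling does not deliver what you claim. The paper itself does not spell this out and refers to \cite[Lemma~4.4]{blondel2021stefan} for the full construction.
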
 
\begin{proof}
This result can be proved using the same steps as in \cite[Lemma 4.4]{blondel2021stefan}. We therefore omit the proof and simply sketch the main idea, and refer the interested reader to the latter for the full implementation. First, one can use attractiveness and the boundedness of $\alpha^{\rm ini}$ to bound the probability of the event above by the same under the equilibrium distribution (see \eqref{eq:mualphastar}) $\mu^\star_{\alpha_\star+1}$, where $\alpha_\star$ is defined in \eqref{eq:assaslpha} as an arbitrary uniform upper bound on $\alpha^{\rm ini}$. The continuous-time process at equilibrium can then be coupled with a discrete-time Markov chain at equilibrium which is the discrete skeleton of the continuous-time one. A union bound then concludes the proof. 
\end{proof}
%
%
%

\subsection{Macroscopic mapping}
\label{sec:macromap}
\subsubsection{Construction}
We now want to build a macroscopic mapping between weak solutions to \eqref{eq:PDEstrongZR} and \eqref{eq:PDEstrong}. We already did it for the initial profiles $\rho^{\rm ini}$, $\alpha^{\rm ini}$ in \eqref{eq:alpha-ini}. With some regularity on the initial profiles, one can proceed as follows: 
fix some measurable function ${\rho}_t:[0,T]\times \T \to [0,1]$,  satisfying 
\[0\leq \rho_t(u)\leq \rho_\star<1, \qquad \text{for any } u\in \T, \;t\leq T, \] 
as well as a measurable time-trajectory $\chi_t:[0,T]\to\T$ satisfying $\chi_0=0$.
Let us then define \begin{equation}\label{eq:defvt}\theta_t:=\int_\T(1-\rho_t(u)) du, \qquad \text{and}\qquad 
 v_t(u):=\theta_t^{-1}\int_{\chi_t}^u(1-\rho_t(u')) du', \quad \text{for any } u \in \T.
\end{equation} 
For any fixed $t>0$, the mapping $v_t:\T\to\T$ is strictly increasing and can be inverted, so that at a given time $t>0$, there is a one-to-one mapping between the two variables $u$ and $v:=v_t(u)$. Now, define 
\begin{equation}
\label{eq:mappingmacro}
\alpha_t(v)=\frac{\rho_t}{1-\rho_t}(v_t^{-1}(v)),
\end{equation}
then one can check that 
\lj{\[u_t(v):= \chi_t+\theta_t \int_0^v(1+\alpha_t)(v')dv',\]}
is the inverse mapping of $v_t$.  
In the same way, given $\alpha_t$ and $\chi_t$, one can build the corresponding $\rho_t$ by inverting all relationships above.

 The problem  is that the mapping $v_t$ defined in \eqref{eq:defvt} linking $\rho$ to $\alpha$ is not smooth, and neither are the functions $\mathcal{G}$ and $\mathcal{H}$. This issue can be overcome if the supercritical and subcritical phases are composed of a finite number of segments, and the critical phase is composed of a finite number of pointwise interfaces. In this case, weak solutions are in fact strong solutions, smooth in the supercritical set, and with smooth interface trajectories except at the initial time. In that case, the interfaces are the only points where $\mathcal{G}(\rho)$ and $\mathcal{H}(\alpha)$ are not smooth. However, this strategy restricts significantly the choices of initial profile, we therefore prove a regularization estimate that will allow us to use the mapping between weak solutions for general profiles $\rho^{\rm ini}$ and $\alpha^{\rm ini}$.

\subsubsection{Smoothing out solutions to the Stefan problems}
We first apply this mapping to smooth approximations of the weak solutions of the Stefan problem. To that aim, we consider smooth modifications of the functions $\mathcal{G}$ and $\mathcal{H}$ which are bounded away from $0$.
More precisely, consider an approximation $\mathcal{H}^\varepsilon$ of $\mathcal{H}$ on $[0,1]$, which satisfies  $\mathcal{H}^\varepsilon\in \mathcal{C}^\infty([0,1])$ for any $\epsilon >0$ and which is such that 
\begin{align}
\label{eq:H1epsilon}\tag{i}
& \lim_{\varepsilon \to 0}\norm{\mathcal{H}^\varepsilon-\mathcal{H}}_{\infty}=0,
 \\ \tag{ii}
&\text{for any } r\in [0,1], \quad \varepsilon\leq \mathcal{H}^\varepsilon(r)\leq1 , \quad \text{and} \quad 2\varepsilon\leq \frac{d}{dr}\mathcal{H}^\varepsilon(r)\leq 4. \label{eq:H3epsilon}
\end{align}
Further define on $[0,+\infty]$ the function $\mathcal{G}^\varepsilon(r)=\mathcal{H}^\varepsilon(\frac{r}{1+r})$ (recall \eqref{eq:relation}),  which satisfies analogous properties.

\medskip

Since all considered functions are then smooth for any positive $t>0$, tedious but straightforward computations yield the following result.

\begin{proposition}[Mapping between strong solutions]
\label{prop:classicalsol}
Fix $\varepsilon>0$. 

\begin{enumerate}
\item[$(\rho\to \alpha)$] Let  $\rho^{\rm ini}$ be a measurable function which satisfies Assumption \ref{ass:L1}  and define $\alpha^{\rm ini}$  through  \eqref{eq:alpha-ini}. Assume that $\rho_t^\varepsilon$ is the unique classical solution to 
\begin{equation}
\label{eq:classicalEXep}
\partial_t \rho_t^\varepsilon=\partial_u^2\mathcal{H}^\varepsilon(\rho_t^\varepsilon), \qquad  \rho_0^\varepsilon=\rho^{\rm ini},
\end{equation}
and define $\chi_t^\varepsilon$ implicitely by 
\begin{equation}
\label{eq:defnutep}
\int_0^{\chi_t^\varepsilon} (1-\rho_t^\varepsilon(u))\,du=\int_0^t \partial_u \mathcal{H}^\varepsilon (\rho_s^\varepsilon)(0)\,ds.
\end{equation}
Then, the function $\widehat{\alpha}_t^\varepsilon:=\alpha_{\theta^2 t}^\varepsilon$ built through the mapping \eqref{eq:mappingmacro} is the unique  classical solution of the parabolic equation 
\begin{equation}
\label{eq:classicalZRep}
\partial_t \widehat{\alpha}_t^\varepsilon=\partial_v^2\mathcal{G}^{\varepsilon}(\widehat{\alpha}_t^\varepsilon), \qquad  \widehat{\alpha}_0^\varepsilon=\alpha^{\rm ini}.
\end{equation} 
\item[$(\alpha\to\rho)$] Let $\alpha^{\rm ini}$ be a measurable function which satisfies \eqref{eq:assaslpha}, and define $\rho^{\rm ini}$ by inverting \eqref{eq:alpha-ini}. Assume that $\widehat{\alpha}_t^\varepsilon$  is the unique  classical solution of the parabolic equation 
\begin{equation}
\label{eq:classicalZRep2}
\partial_t \widehat{\alpha}_t^\varepsilon=\partial_v^2\mathcal{G}^{\varepsilon}(\widehat{\alpha}_t^\varepsilon), \qquad  \widehat{\alpha}_0^\varepsilon=\alpha^{\rm ini},
\end{equation}  and define $\chi_t^\varepsilon$ explicitely by \[\chi_t= \theta^{-1} \int_0^t \partial_v \mathcal{G}^\varepsilon (\widehat{\alpha}_{s  \theta^{-2}}) (0) ds=\theta \big \langle v, \widehat{\alpha}_{t \theta^{-2}}-\widehat{\alpha}_0 \big \rangle.\]  
Then, the function $\rho_t^\varepsilon$ built through the mapping \eqref{eq:mappingmacro} with $\alpha_t^\varepsilon:=\widehat{\alpha}_{t\theta^{-2}}$ is the unique classical solution to
\begin{equation}
\label{eq:classicalEXep2}
\partial_t \rho_t^\varepsilon=\partial_u^2\mathcal{H}^\varepsilon(\rho_t^\varepsilon), \qquad  \rho_0^\varepsilon=\rho^{\rm ini}.
\end{equation}
\end{enumerate} 

\end{proposition}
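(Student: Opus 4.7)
The plan is to verify the mapping directly by a change of variables, exploiting that for fixed $\varepsilon>0$ the smoothed equations \eqref{eq:classicalEXep} and \eqref{eq:classicalZRep} are uniformly parabolic with smooth coefficients, so solutions are classical and all computations are legitimate. Throughout, the uniform bound $\rho_t^\varepsilon\leq \rho_\star<1$ provided by Assumption \ref{ass:L1} together with the maximum principle for \eqref{eq:classicalEXep} ensures $1-\rho_t^\varepsilon\geq 1-\rho_\star>0$, so that $v_t:\T\to\T$ defined in \eqref{eq:defvt} is a smooth diffeomorphism for every $t\in[0,T]$, which is what makes the mapping well-defined.

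First I would record two preliminary identities. Integrating \eqref{eq:classicalEXep} over $\T$ shows that mass is conserved, hence $\theta_t=\theta$ is independent of $t$. Differentiating the implicit definition \eqref{eq:defnutep} of $\chi_t^\varepsilon$ in time and invoking the PDE to rewrite $\int_0^{\chi_t^\varepsilon}\partial_t\rho_t^\varepsilon\, du$ as a difference of boundary terms gives the clean ODE
\[
\dot\chi_t^\varepsilon \;=\; \frac{\partial_u \mathcal{H}^\varepsilon(\rho_t^\varepsilon)(\chi_t^\varepsilon)}{1-\rho_t^\varepsilon(\chi_t^\varepsilon)}.
\]
Next, differentiating the identity $v=v_t(u_t(v))$ with respect to $t$ at fixed $v$, using the same trick to express $\int_{\chi_t^\varepsilon}^{u}\partial_t\rho_t^\varepsilon\, du'$ as $\partial_u\mathcal{H}^\varepsilon(\rho_t^\varepsilon)(u)-\partial_u\mathcal{H}^\varepsilon(\rho_t^\varepsilon)(\chi_t^\varepsilon)$, and substituting the formula for $\dot\chi_t^\varepsilon$, the boundary contributions cancel and one obtains
\[
\partial_t u_t(v) \;=\; \frac{\partial_u \mathcal{H}^\varepsilon(\rho_t^\varepsilon)(u)}{1-\rho_t^\varepsilon(u)}.
\]

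With these in hand, the verification of \eqref{eq:classicalZRep} reduces to a direct chain rule computation. Setting $\alpha_t^\varepsilon(v)=\rho_t^\varepsilon(u_t(v))/(1-\rho_t^\varepsilon(u_t(v)))$ and using $\partial_v u_t=\theta/(1-\rho_t^\varepsilon)$ together with the algebraic identity $\mathcal{G}^\varepsilon(\alpha)=\mathcal{H}^\varepsilon(\rho)$ from \eqref{eq:relation}, both $\partial_t\alpha_t^\varepsilon|_v$ and $\theta^{-2}\partial_v^2\mathcal{G}^\varepsilon(\alpha_t^\varepsilon)$ simplify, after substitution of $\partial_t\rho_t^\varepsilon=\partial_u^2\mathcal{H}^\varepsilon(\rho_t^\varepsilon)$, to the common expression
\[
\frac{\partial_t\rho_t^\varepsilon}{(1-\rho_t^\varepsilon)^2} + \frac{(\mathcal{H}^\varepsilon)'(\rho_t^\varepsilon)\,(\partial_u\rho_t^\varepsilon)^2}{(1-\rho_t^\varepsilon)^3}.
\]
Hence $\alpha_t^\varepsilon$ satisfies $\partial_t\alpha_t^\varepsilon=\theta^{-2}\partial_v^2\mathcal{G}^\varepsilon(\alpha_t^\varepsilon)$, and the time rescaling $\widehat{\alpha}_t^\varepsilon=\alpha_{\theta^2 t}^\varepsilon$ absorbs the factor $\theta^{-2}$ to yield \eqref{eq:classicalZRep}. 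The initial condition $\widehat{\alpha}_0^\varepsilon=\alpha^{\rm ini}$ follows because $\chi_0^\varepsilon=0$ makes $v_0$ coincide with the map $v(\cdot)$ defined just before \eqref{eq:alpha-ini}.

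The reverse direction $(\alpha\to\rho)$ is entirely symmetric: one first checks by differentiating $\chi_t=\theta\langle v,\widehat{\alpha}_{t\theta^{-2}}-\widehat{\alpha}_0\rangle$ and using \eqref{eq:classicalZRep2} that $\dot\chi_t=\theta^{-1}\partial_v\mathcal{G}^\varepsilon(\widehat{\alpha}_{t\theta^{-2}})(0)$, and then the same chain-rule computation run through the inverse transformation produces \eqref{eq:classicalEXep2}. Uniqueness in both cases follows from classical parabolic theory applied to \eqref{eq:classicalEXep}--\eqref{eq:classicalZRep}: by \eqref{eq:H3epsilon} the diffusivities $(\mathcal{H}^\varepsilon)'$ and $(\mathcal{G}^\varepsilon)'$ are smooth and bounded away from $0$ and $\infty$, so these are uniformly parabolic quasilinear equations. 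The main delicate point of the whole argument is precisely that $v_t$ remain a smooth diffeomorphism throughout $[0,T]$; this is why Assumption \ref{ass:L1} combined with the nondegeneracy \eqref{eq:H3epsilon} is essential, and is also what will later prevent the mapping from being extended to weak solutions without a regularization step.
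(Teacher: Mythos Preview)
Your proposal is correct and is exactly the ``tedious but straightforward computation'' that the paper declines to write out: the paper gives no proof beyond that phrase, and your chain-rule verification of $\partial_t\alpha_t^\varepsilon=\theta^{-2}\partial_v^2\mathcal{G}^\varepsilon(\alpha_t^\varepsilon)$ via the intermediate identities for $\dot\chi_t^\varepsilon$ and $\partial_t u_t(v)$ is precisely what is being omitted. One small overreach: condition \eqref{eq:H3epsilon} only gives $0\le (\mathcal{H}^\varepsilon)'\le 4$, not a strictly positive lower bound, so your appeal to uniform parabolicity for uniqueness is not supported by the stated hypotheses; but this is harmless, since the proposition already \emph{assumes} existence and uniqueness of the classical solutions and only asserts the correspondence between them.
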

\mm{We note that the existence of classical solutions to equations \eqref{eq:classicalZRep} and \eqref{eq:classicalEXep2} is standard, thanks to the fact that the diffusion coefficient is smooth, bounded, and bounded away from $0$,  see for instance \cite[Section 3.1.1, p.~26]{Vasquez07}, or for a more detailed proof \cite[Theorem 6.1, p.~452]{Lady68}.} 
Following Proposition \ref{prop:classicalsol}, for  an initial profile $\rho^{\rm ini}$ bounded away from 1 as in Assumption \ref{ass:L1}, and for any $\varepsilon>0$, we denote by $\rho^\varepsilon$ and $\alpha^\varepsilon$ the classical solutions to \eqref{eq:classicalEXep} and \eqref{eq:classicalZRep} respectively, with initial conditions $\rho^{\rm ini}$ and the corresponding $\alpha^{\rm ini}$. We now state and prove the following convergence result.
\begin{proposition}
\label{prop:L2epsilon}
Fix $T>0$. Then,  uniformly in $t\in [0,T]$
\begin{equation}\label{eq:conv-rho}\rho_t^\varepsilon \xrightarrow[\epsilon \to 0]{L^2(\T)}\rho_t, \qquad \mathcal{H}^\varepsilon(\rho_t^\varepsilon) \xrightarrow[\epsilon \to 0]{L^2(\T)}\mathcal{H}(\rho_t) \end{equation}
as well as 
\begin{equation}\label{eq:conv-alpha}\alpha_t^\varepsilon \xrightarrow[\epsilon \to 0]{L^2(\T)}\alpha_t, \qquad \mathcal{G}^\varepsilon(\alpha_t^\varepsilon) \xrightarrow[\epsilon \to 0]{L^2(\T)}\mathcal{G}(\alpha_t), \end{equation}
where $ \rho_t$, $\alpha_t$ are the unique weak solutions to \eqref{eq:PDEstrong} and \eqref{eq:PDEstrongZR} in the sense of Definition \ref{Def:PDEweak}.
\end{proposition}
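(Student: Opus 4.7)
The plan is, focusing on \eqref{eq:conv-rho} for the exclusion profile (the argument for \eqref{eq:conv-alpha} being fully analogous up to replacing $\mathcal{H}^\varepsilon, \rho^\varepsilon, \rho_\star$ by $\mathcal{G}^\varepsilon, \alpha^\varepsilon, \alpha_\star$, and using \eqref{eq:assaslpha} as the relevant $L^\infty$ bound), to implement the standard strategy for Stefan-type equations: extract strong $L^2$ limits of the smooth approximations via energy estimates and Aubin--Lions, identify these limits with the unique weak solution of \eqref{eq:PDEstrong} through a Minty-type monotonicity argument, and upgrade the resulting pointwise-in-$t$ convergence to uniform convergence by equicontinuity.

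I would first establish uniform-in-$\varepsilon$ a priori bounds. The maximum principle applied to the quasilinear parabolic equation \eqref{eq:classicalEXep} gives $0\leq \rho_t^\varepsilon\leq \rho_\star$ for every $\varepsilon, t, u$, thanks to Assumption \ref{ass:L1}. Testing \eqref{eq:classicalEXep} against $\mathcal{H}^\varepsilon(\rho_t^\varepsilon)$ and integrating in $u$ and $t$ yields the energy identity
\[\int_\T \Phi^\varepsilon(\rho_t^\varepsilon)(u)\,du + \int_0^t\!\!\int_\T \bigl|\partial_u \mathcal{H}^\varepsilon(\rho_s^\varepsilon)\bigr|^2\,du\,ds = \int_\T \Phi^\varepsilon(\rho^{\rm ini})(u)\,du,\]
where $\Phi^\varepsilon(r):=\int_0^r\mathcal{H}^\varepsilon(s)\,ds$ is bounded uniformly in $\varepsilon$ on $[0,\rho_\star]$ by \eqref{eq:H3epsilon}. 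This produces a uniform bound on $\mathcal{H}^\varepsilon(\rho^\varepsilon)$ in $L^2([0,T];H^1(\T))$, and reading \eqref{eq:classicalEXep} directly gives a uniform bound on $\partial_t\rho^\varepsilon$ in $L^2([0,T];H^{-1}(\T))$. By Aubin--Lions, along a subsequence $\mathcal{H}^\varepsilon(\rho^\varepsilon)\to \bar h$ strongly in $L^2([0,T]\times\T)$, and the $L^\infty$ bound provides $\rho^\varepsilon\rightharpoonup\bar\rho$ weakly-$\ast$ in $L^\infty$.

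Passing to the limit in the weak formulation \eqref{eq:weakF1} written with $\mathcal{H}^\varepsilon$ in place of $\mathcal{H}$, and using the uniform convergence \eqref{eq:H1epsilon} to control the nonlinearity, yields that $\bar\rho$ and $\bar h$ satisfy the Stefan-type relation. The key identification $\bar h=\mathcal{H}(\bar\rho)$ is then obtained by a Minty-type monotonicity argument: for any smooth $\beta:[0,T]\times\T\to[0,1]$, the monotonicity of $\mathcal{H}^\varepsilon$ provides
\[\int_0^T\!\!\int_\T (\rho^\varepsilon-\beta)\bigl(\mathcal{H}^\varepsilon(\rho^\varepsilon)-\mathcal{H}^\varepsilon(\beta)\bigr)\,du\,dt \geq 0,\]
in which one may pass to the limit using the weak convergence of $\rho^\varepsilon$, the strong convergence of $\mathcal{H}^\varepsilon(\rho^\varepsilon)$ and the uniform convergence $\mathcal{H}^\varepsilon\to\mathcal{H}$. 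Varying $\beta$ identifies $\bar h=\mathcal{H}(\bar\rho)$, after which the uniqueness of weak solutions recalled in Remark \ref{remark:uniqueness} gives $\bar\rho=\rho$, so the full family converges and not only subsequences.

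The promotion to strong $L^2$ convergence $\rho^\varepsilon\to\rho$ (rather than only weak) is obtained by combining the Minty inequality with the testing of \eqref{eq:classicalEXep} against $\rho^\varepsilon$ itself (after a Steklov-type time smoothing), which yields $\|\rho_t^\varepsilon\|_{L^2(\T)}^2\to\|\rho_t\|_{L^2(\T)}^2$; together with the weak convergence, this gives strong convergence in $L^2(\T)$ for almost every $t$. Finally, the weak formulation provides, for any $\varphi\in C^2(\T)$, the uniform Lipschitz bound
\[\bigl|\langle \rho_t^\varepsilon-\rho_s^\varepsilon,\varphi\rangle\bigr| \leq |t-s|\,\|\partial_u^2\varphi\|_{L^\infty(\T)}\,\|\mathcal{H}^\varepsilon\|_{L^\infty([0,1])},\]
uniform in $\varepsilon$ since $\mathcal{H}^\varepsilon\leq 1$ by \eqref{eq:H3epsilon}. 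Arzelà--Ascoli together with a density argument in the test functions then upgrades the pointwise-in-$t$ $L^2(\T)$-convergence to uniform-in-$t\in[0,T]$ convergence. The main obstacle in this plan is the identification $\bar h=\mathcal{H}(\bar\rho)$: the degeneracy of $\mathcal{H}$ at the critical density $r=\tfrac12$ forbids any direct pointwise inversion, but the Minty monotonicity trick circumvents this cleanly, exploiting only that $\mathcal{H}^\varepsilon$ is non-decreasing.
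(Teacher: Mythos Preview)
Your route is genuinely different from the paper's. Rather than extracting compactness and identifying a limit, the paper works directly on the difference $\delta^\varepsilon_t := \rho^\varepsilon_t - \rho_t$: using the Fourier basis $\psi_k(u)=e^{2i\pi ku}$ on $\T$, it introduces the $H^{-1}$-type functional $R(t)=\sum_k(k^2+1)^{-1}\langle\psi_k,\delta^\varepsilon_t\rangle^2$, differentiates it in time via the two weak equations, and closes a Gronwall inequality using only \eqref{eq:H3epsilon} and the pointwise bound $(\mathcal{H}^\varepsilon(a)-\mathcal{H}^\varepsilon(b))^2\le 4(a-b)(\mathcal{H}^\varepsilon(a)-\mathcal{H}^\varepsilon(b))$. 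This yields the quantitative estimate $R(t)\le C\varepsilon e^{Ct}$, uniform on $[0,T]$, with no compactness or identification step; in particular it never needs Aubin--Lions, Minty, or the uniqueness from Remark~\ref{remark:uniqueness}.

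Your compactness/Minty scheme is the standard existence-type argument, and the identification $\bar h=\mathcal{H}(\bar\rho)$ plus uniqueness correctly pins down the weak limit $\bar\rho=\rho$. The substantive gap is the upgrade from weak to \emph{strong} $L^2$ convergence of $\rho^\varepsilon_t$. Because $\mathcal{H}$ is constant on the entire subcritical interval $[0,\tfrac12]$, the monotonicity information $\int(\rho^\varepsilon-\rho)(\mathcal{H}^\varepsilon(\rho^\varepsilon)-\mathcal{H}^\varepsilon(\rho))\to 0$ carries no control over $\rho^\varepsilon-\rho$ in the frozen phase. Your proposed remedy---testing \eqref{eq:classicalEXep} against $\rho^\varepsilon$ to obtain $\|\rho^\varepsilon_t\|_{L^2}^2\to\|\rho_t\|_{L^2}^2$---does not go through: the identity reads $\tfrac12\|\rho^\varepsilon_t\|_2^2=\tfrac12\|\rho^{\rm ini}\|_2^2-\int_0^t\!\int(\mathcal{H}^\varepsilon)'(\rho^\varepsilon)|\partial_u\rho^\varepsilon|^2$, and you can neither pass to the limit in the dissipation (there is no $\varepsilon$-uniform lower bound on $(\mathcal{H}^\varepsilon)'$) nor write the companion identity for the merely weak solution $\rho$. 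This degeneracy is precisely what the paper's direct $H^{-1}$/Gronwall argument sidesteps. A secondary technical point: Aubin--Lions in its standard form requires the time-derivative bound on the \emph{same} sequence that sits in $L^2(H^1)$; you only have $\mathcal{H}^\varepsilon(\rho^\varepsilon)\in L^2(H^1)$ but $\partial_t\rho^\varepsilon\in L^2(H^{-1})$, so a variant such as the Alt--Luckhaus lemma is needed to extract strong convergence of $\mathcal{H}^\varepsilon(\rho^\varepsilon)$.
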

\begin{proof}
We will only prove the first two convergences in  \eqref{eq:conv-rho}, the other ones are proved analogously. Further note that the second one in \eqref{eq:conv-rho} follows straightforwardly from the first one, together with the boundedness of the derivative of $\mathcal{H}^{\varepsilon}$ (condition \eqref{eq:H3epsilon} above). Indeed, for any $t$, 
\begin{align*}
\int_0^1&\cro{\mathcal{H}^\varepsilon(\rho_t^\varepsilon)-\mathcal{H}(\rho_t)}^2(u) du \\ &\leq 2 \int_0^1\cro{\mathcal{H}^\varepsilon(\rho_t^\varepsilon)-\mathcal{H}^\varepsilon(\rho_t)}^2(u) du+2 \int_0^1\cro{\mathcal{H}^\varepsilon(\rho_t)-\mathcal{H}(\rho_t)}^2(u) du\\
&\leq 32 \int_0^1\cro{\rho_t^\varepsilon -\rho_t}^2(u) du+2 \norm{\mathcal{H}^\varepsilon-\mathcal{H}}_{\infty}^2.
\end{align*}
Both terms in the right hand side vanish, according to \eqref{eq:H1epsilon} and the first convergence result \eqref{eq:conv-rho}.

We now prove that $\rho_t^\varepsilon \to\rho_t$ in $L^2(\T)$ by adapting arguments given \emph{e.g.}~in  \cite[Theorem 4.4, Appendix 2]{klscaling}, to prove uniqueness of weak solutions of a nonlinear parabolic equation. For any $k\in \Z$, define on $\T$ the functions $\psi_k(u)=e^{2i\pi k u}$.
Define also
\[\delta^\varepsilon_t:=\rho_t^\varepsilon -\rho_t,\qquad \delta\mathcal{H}^\varepsilon_t:=\mathcal{H}^\varepsilon(\rho_t^\varepsilon)-\mathcal{H}(\rho_t).\] 
For any $L>0$, further introduce  the function 
 \[R_L(t)\equiv \lj{R_{L,\varepsilon} (t) := }\sum_{k\in\Z}\frac{L}{(k^2+1)(k^2+L)}\ang{\psi_k,\delta^\varepsilon_t}^2,\]
 which converges as $L\to\infty$ to 
\lj{ \[R(t)\equiv R_\varepsilon (t) := \sum_{k\in\Z}\frac{\ang{\psi_k,\delta^\varepsilon_t}^2}{k^2+1}\]}
\lj{by the dominated convergence theorem} because $\delta^\varepsilon_t$ is bounded \lj{by $2$, and in particular, $\ang{\psi_k,\delta^\varepsilon_t}$ is  bounded by some constant uniformly in $k$}. Furthermore, for any $L>0$, since $\rho^\varepsilon$ and $\rho$ are weak solutions to \eqref{eq:classicalEXep} and \eqref{eq:PDEstrong} respectively,
 \begin{align}
\frac{d}{dt}R_L(t)&=-8\pi^2\sum_{k\in\Z}\frac{k^2L}{(k^2+1)(k^2+L)}\ang{\psi_k,\delta^\varepsilon_t}\ang{\psi_k,\delta\mathcal{H}^\varepsilon_t}\notag\\ 
&\leq -8\pi^2 \ang{\delta^\varepsilon_t,\delta\mathcal{H}^\varepsilon_t}
+8\pi^2 \sum_{k\in\Z}\frac{L}{(k^2+1)(k^2+L)}\ang{\psi_k,\delta^\varepsilon_t}\ang{\psi_k,\delta\mathcal{H}^\varepsilon_t}\notag\\
&\quad +8\pi^2 \sum_{k\in\Z}\frac{k^2}{k^2+L}\ang{\psi_k,\delta^\varepsilon_t}\ang{\psi_k,\delta\mathcal{H}^\varepsilon_t}.\label{eq:RLT2}
 \end{align}
\ccl{We will ultimately let $L\to\infty$, so that the last term above will vanish.} We now bound the second term by the Cauchy-Schwarz inequality and we get
\begin{align}&8\pi^2 \sum_{k\in\Z}\frac{L}{(k^2+1)(k^2+L)}\ang{\psi_k,\delta^\varepsilon_t}\ang{\psi_k,\delta\mathcal{H}^\varepsilon_t}\notag\\
&\leq  16\pi^2  R_L(t)+\pi^2 \sum_{k\in\Z}\frac{L}{(k^2+1)(k^2+L)}\ang{\psi_k,\delta\mathcal{H}^\varepsilon_t}^2\notag\\
&\leq  16\pi^2  R_L(t)+2\pi^2 \sum_{k\in\Z}\ang{\psi_k,\mathcal{H}^\varepsilon(\rho_t^\varepsilon)-\mathcal{H}^\varepsilon(\rho_t)}^2+ 2\pi^2\sum_{k\in\Z}\frac{1}{k^2+1}\ang{\psi_k,\mathcal{H}^\varepsilon(\rho_t)-\mathcal{H}(\rho_t)}^2\notag\\
&\leq  16\pi^2 R_L(t)+2\pi^2 \int_0^1\big(\mathcal{H}^\varepsilon(\rho^\varepsilon_t)-\mathcal{H}^\varepsilon(\rho_t)\big)^2(u)du+ \ccl{C\norm{\mathcal{H}^\varepsilon-\mathcal{H}}_{\infty}^2}\notag,
\end{align} 
\ccl{where in the last inequality we used the Plancherel-Parseval identity and the fact that the $\psi_k$ are bounded by $1$.}
Regarding the second term, we can use \eqref{eq:H3epsilon} to obtain
\begin{align*}
2\pi^2\int_0^1(\mathcal{H}^\varepsilon\big(\rho^\varepsilon_t)-\mathcal{H}^\varepsilon(\rho_t)\big)^2(u)du&\leq8\pi^2\int_0^1\delta_t^\varepsilon\big(\mathcal{H}^\varepsilon(\rho^\varepsilon_t)-\mathcal{H}^\varepsilon(\rho_t)\big)(u)du\\
&\leq 16\pi^2 \ccl{\norm{\mathcal{H}^\varepsilon-\mathcal{H}}_{\infty}^2}+8\pi^2\int_0^1\delta_t^\varepsilon\delta\mathcal{H}^\varepsilon_t(u)du.
\end{align*}
\ccl{The second term cancels out with the first term in the right-hand side of \eqref{eq:RLT2}, so that putting together those bounds,} we obtain for a constant $C>0$ and $\varepsilon\leq 1$
\[R_L(t)\leq 16\pi^2 \int_0^t R_L(s) ds + Ct\ccl{\norm{\mathcal{H}^\varepsilon-\mathcal{H}}_{\infty}^2} + 8\pi^2\int_0^t \sum_{k\in\Z}\frac{k^2}{k^2+L}\<\psi_k,\delta^\varepsilon_t\>\<\psi_k,\delta\mathcal{H}^\varepsilon_t\>.\]
Since both $\delta^\varepsilon_t$ and $\delta\mathcal{H}^\varepsilon_t$ are in $L^2(\T\times [0,T])$, $\sum_k\<\psi_k,\delta^\varepsilon_t\>^2$ and $\sum_k\ang{\psi_k,\delta\mathcal{H}^\varepsilon_t}^2$ are both finite, the last term above vanishes as $L\to\infty$. Hence, by Gronwall's inequality, as $L\to\infty$, we obtain
\[R(t)\leq C \ccl{\norm{\mathcal{H}^\varepsilon-\mathcal{H}}_{\infty}^2} e^{16\pi^2t}\]
which vanishes as $\varepsilon \to 0$ uniformly in $t\in [0,T]$. This shows that for any $k\in \N$, $\<\psi_k,\delta^\varepsilon_t\>$ vanishes as $\varepsilon \to 0$, which in turn, by the dominated convergence Theorem, yields that 
\[ \lim_{\varepsilon\to 0}\<\delta_t^\varepsilon,\delta_t^\varepsilon\>=\lim_{\varepsilon\to 0}\sum_{k\in \Z}\<\psi_k,\delta^\varepsilon_t\>^2=0\]
as wanted.
\end{proof}

\subsubsection{Mapping between weak solutions to \eqref{eq:PDEstrong} and \eqref{eq:PDEstrongZR}}

We are now ready to state the final general result about the one-to-one correspondence between weak solutions:

\begin{proposition}[Mapping between weak solutions]\label{pro:mapping}
The result of Proposition \ref{prop:classicalsol} holds if one replaces $\mathcal{H}^\varepsilon$ by $\mathcal{H}$, $\mathcal{G}^\varepsilon$ by $\mathcal{G}$ and ``classical'' by ``weak'' in the sense of Definition \ref{Def:PDEweak}.

\end{proposition}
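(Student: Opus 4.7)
The plan is to pass to the limit $\varepsilon\to 0$ in the smooth mapping of Proposition~\ref{prop:classicalsol}, using as driving input the $L^2$-convergences of Proposition~\ref{prop:L2epsilon}. I describe the direction $(\rho\to\alpha)$ in detail; the direction $(\alpha\to\rho)$ is symmetric. Given $\rho^{\rm ini}$ as in Assumption~\ref{ass:L1}, let $\rho_t$ denote the weak solution to \eqref{eq:PDEstrong} and $\rho_t^\varepsilon$ the classical solution to the regularized equation \eqref{eq:classicalEXep}, and let $\alpha_t^\varepsilon:=\widehat{\alpha}_{t\theta^{-2}}^\varepsilon$ be the $\varepsilon$-mapped profile obtained from $\rho_t^\varepsilon$ via \eqref{eq:mappingmacro} with interface $\chi_t^\varepsilon$ given by \eqref{eq:defnutep}. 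By Proposition~\ref{prop:classicalsol}, $\alpha_t^\varepsilon$ is the classical solution of \eqref{eq:classicalZRep} with initial data $\alpha^{\rm ini}$.

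First I would establish the convergence $\chi_t^\varepsilon\to\chi_t$ of the macroscopic interfaces. Testing \eqref{eq:classicalEXep} against the identity function $u$ and integrating twice by parts on $\T$ (the boundary terms collapsing by periodicity) gives
\[
\langle u,\rho_t^\varepsilon - \rho^{\rm ini}\rangle \;=\; \int_0^t \partial_u\mathcal{H}^\varepsilon(\rho_s^\varepsilon)(0)\, ds \;=\; \int_0^{\chi_t^\varepsilon}(1-\rho_t^\varepsilon(u))\, du,
\]
which is the $\varepsilon$-analog of \eqref{eq:chibis}. Proposition~\ref{prop:L2epsilon} allows passing to the limit on both sides, yielding $\chi_t^\varepsilon\to\chi_t$ with $\chi_t$ defined by \eqref{eq:chibis}.

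Next I pass to the limit in the mapping itself. A direct change of variable shows that \eqref{eq:mappingmacro} is equivalent to the integral identity
\[
\theta\int_\T \alpha_t^\varepsilon(v)\,\varphi(v)\, dv \;=\; \int_\T \rho_t^\varepsilon(u)\,\varphi\bigl(v_t^\varepsilon(u)\bigr)\, du,\qquad \varphi\in C(\T),
\]
where $v_t^\varepsilon(u)=\theta^{-1}\int_{\chi_t^\varepsilon}^u (1-\rho_t^\varepsilon(u'))\,du'$. From $\rho_t^\varepsilon\to\rho_t$ in $L^1(\T)$ and $\chi_t^\varepsilon\to\chi_t$ one gets $v_t^\varepsilon\to v_t$ uniformly on $\T$. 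Combined with the $L^2$ convergence of $\rho_t^\varepsilon$, the right-hand side converges to $\int_\T \rho_t(u)\varphi(v_t(u))\, du$, while the left-hand side converges to $\theta\int_\T \tilde\alpha_t(v)\varphi(v)\,dv$, where $\tilde\alpha_t$ is precisely the profile built from $\rho_t$ through \eqref{eq:mappingmacro}. On the other hand, Proposition~\ref{prop:L2epsilon} also yields $\alpha_t^\varepsilon\to\alpha_t$ in $L^2(\T)$, where $\alpha_t$ is the weak solution of \eqref{eq:PDEstrongZR} with initial data $\alpha^{\rm ini}$. By uniqueness of the $L^2$ limit, $\alpha_t=\tilde\alpha_t$, which is the claim.

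The main obstacle lies in ensuring that the change of variables $v_t$ is nondegenerate when $\rho_t$ is only a weak solution, i.e.\ that $v_t$ is a genuine bi-Lipschitz homeomorphism of $\T$ so that $\tilde\alpha_t$ is a well-defined $L^\infty$ function. This requires a uniform upper bound $\rho_t^\varepsilon\leq \rho_\star<1$ that is preserved in the limit. Such a bound follows from Assumption~\ref{ass:L1} via the parabolic maximum principle applied to \eqref{eq:classicalEXep} (since $\mathcal{H}^\varepsilon$ is smooth and nondecreasing, the constant $\rho_\star$ is a supersolution), and it then ensures $v_t^\varepsilon$ is uniformly bi-Lipschitz in $\varepsilon$. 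Making this step quantitative --- and thereby transferring the uniform bound to $\rho_t$ and $v_t$ --- is the one delicate point; once it is in hand, all the convergences above reduce to routine $L^2$ and dominated convergence arguments, and the symmetric $(\alpha\to\rho)$ direction follows from the same scheme, replacing the implicit definition of $\chi_t^\varepsilon$ by its explicit counterpart from Proposition~\ref{prop:classicalsol}.
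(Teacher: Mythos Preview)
Your proposal is correct and follows essentially the same scheme as the paper: approximate by the regularized solutions $\rho_t^\varepsilon,\alpha_t^\varepsilon$, invoke Proposition~\ref{prop:L2epsilon} for their $L^2$ convergence to the respective weak solutions, show $\chi_t^\varepsilon\to\chi_t$ and hence $v_t^\varepsilon\to v_t$ uniformly, and conclude that the mapping commutes with the limit. The only cosmetic differences are that the paper obtains $\chi_t^\varepsilon\to\chi_t$ from the $\alpha$-side identity $\chi_t=\theta\langle v,\widehat{\alpha}_{t\theta^{-2}}-\alpha_0\rangle$ rather than your $\rho$-side formula~\eqref{eq:chibis}, and that the paper compares $\alpha_t^\varepsilon$ to the mapped profile directly in $L^2$ norm rather than through your tested integral identity; your more explicit invocation of the maximum principle to secure $\rho_t^\varepsilon\le\rho_\star$ (hence the bi-Lipschitz property of $v_t^\varepsilon$) is a point the paper takes for granted.
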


\begin{proof}
This result is a direct consequence of Propositions \ref{prop:classicalsol} and \ref{prop:L2epsilon}. Assume for example that  $\rho_t$  is the unique weak solution of \eqref{eq:PDEstrong} in the sense of Definition \ref{Def:PDEweak}. Then, the classical solution $\rho^\varepsilon$ to \eqref{eq:classicalEXep} converges in $L^2$ to $\rho$ according to Proposition \ref{prop:L2epsilon}. Then, define $\widehat{\alpha}_t^\varepsilon$ (resp. $\widehat{\alpha}_t$) the functions obtained applying the mapping \eqref{eq:mappingmacro}  to $\rho^\varepsilon_t$ (resp.\;$\rho_t$), according to Proposition \ref{prop:classicalsol}, $\widehat{\alpha}_t^\varepsilon$ is the unique classical solution to \eqref{eq:classicalZRep}, and according to Proposition \ref{prop:L2epsilon}, $ \rho_t^\varepsilon$ converges in $L^2(\T)$ to $\rho_t$ as $\varepsilon \to 0$. To conclude the proof, still according to  Proposition \ref{prop:L2epsilon}, it remains to show that $\alpha_t^\varepsilon$ converges in $L^2(\T)$ to $\alpha_t$ as $\varepsilon \to 0$, which will identify it as the weak solution of \eqref{eq:PDEstrongZR}. Fix $t>0$, we denote by $v_t^\varepsilon$, $\chi_t^\varepsilon$ the quantities relative to $\rho^\varepsilon$ defined by \eqref{eq:defvt} and \eqref{eq:defnutep}, which yields 
\[\int_0^1 (\alpha_t^\varepsilon- \alpha_t)^2(v)dv=\int_0^1 \pa{\frac{\rho_t^\varepsilon}{1-\rho_t^\varepsilon}\circ (v_t^\varepsilon)^{-1}- \frac{\rho_t}{1-\rho_t}\circ v_t^{-1}}^2(v)dv.\]
First note that since $\chi_t=\theta \big \langle v, \widehat{\alpha}_{t \theta^{-2} }-\alpha_0 \big \rangle$ and that the same identity holds for $\chi_t^\varepsilon$ and $\widehat{\alpha}^\varepsilon$, we can write for any $T>0$, as a consequence of Proposition  \ref{prop:L2epsilon}
\[\limsup_{\varepsilon \to 0}\sup_{t\in [0,T]}|\chi_t-\chi_t^\varepsilon|= 0.\]
In particular, still as a consequence of Proposition  \ref{prop:L2epsilon} and identity \eqref{eq:defvt}, 
\[\limsup_{\varepsilon \to 0}\sup_{u\in \T, \, t\in [0,T]}|v_t(u) -v_t^\varepsilon(u) |= 0.\]
so that since the mappings $v_t$, $v_t^\varepsilon$ are strictly increasing, 
\[\limsup_{\varepsilon \to 0}\sup_{v\in \T, \, t\in [0,T]}|v_t ^{-1}(v) -(v_t^\varepsilon)^{-1}(v) |= 0.\] 
This, together with the $L^2$--convergence of  $ \rho_t^\varepsilon$ to $\rho_t$ and the fact that $\rho_t$, $\rho_t^\varepsilon$ are bounded away from $1$ uniformly, proves the result.\end{proof}

\subsection{Proof of Theorem \ref{thm:ex} in the symmetric case}\label{sec:conclsym}

We can now provide an alternative proof of Theorem \ref{thm:ex} in the symmetric case, using the hydrodynamic limit for the symmetric facilitated zero-range process provided by Theorem \ref{thm:zr} (which will be proved independently in Section \ref{sec:hydroZR}). 

\begin{proof}[Proof of Theorem \ref{thm:ex} in the symmetric case] As in the proof of  Lemma \ref{lem:TP}, we construct a sequence of random elements $\{\eta^N,N \geq 1\}$ on a common probability space $(\Sigma, \mc{F}, \nu^\star)$ such that $\eta^N (\sigma)$ has distribution $\nu_N$, and  such that
		\[\lim_{N\to\infty} \frac{1}{N} \sum_{x\in \T_N} \eta^N_x (\cdot) \delta_{x/N}(du) = \rho^{\rm ini} (u) du, \qquad \text{$\nu^\star$ -- a.s.}\]
		under the weak topology.  We start from the observation that for every $\varphi \in C(\T)$,
		\begin{multline}\label{aa}
		\frac{1}{N}\sum_{x=1}^N\eta_x(t)\varphi (\tfrac x N)= \frac{1}{N} \sum_{y=1}^{M} \sum_{x=X_y (t) + 1}^{X_{y+1} (t) - 1} \varphi (\tfrac x N) 
		\\= \frac{1}{N}\sum_{y=1}^M \omega_y (t) \varphi \Big( \frac{X_y (t)} N\Big)+\mathcal{O}\Big(\frac1N\sup_{y} \omega_y (t) \Big).
		\end{multline}
	By Lemma \ref{lem:ZRdensity}, the error term above converges to zero in probability as $N \rightarrow \infty$.  Since \[X_y (t) = \sum_{y^\prime = 1}^{y-1} \omega_{y'} (t) + X_1 (t) + y-1,\]  by Lemma \ref{lem:TP} and Theorem \ref{thm:zr}, \lj{ the first term on the right hand side of \eqref{aa} }converges in $\P_{\eta^N}$--probability to 
	\[\theta \int_{\T} \alpha_{t \theta^{-2}} (v) \varphi \Big( \chi_t + \theta \int_0^v \big(1 + \alpha_{t \theta^{-2}} (v^\prime)\big)\,dv^\prime \Big)\,dv, \]
	where $\alpha_t (v)$ is the unique weak solution to \eqref{eq:PDEstrongZR} with initial condition $\alpha^{\rm ini}$ defined in \eqref{eq:alpha-ini}.  Let
	\[u_t (v)= \chi_t+\theta \int_0^v(1+\alpha_{t \theta^{-2}})(v')dv'.\] 
Since the function $u_t$ is strictly increasing, it has an inverse denoted by $v_t (u)$.  Let 
\[
\rho_t (u) = \frac{\alpha_{t \theta^{-2}}}{1+\alpha_{t \theta^{-2}}} (v_t (u)),
\]
then, by Proposition \ref{pro:mapping}, $\rho_t$ is the unique weak solution to \eqref{eq:PDEstrong} and
\[
\lim_{N\to\infty} \, \frac{1}{N}\sum_{x=1}^N\eta_x(t)\varphi (\tfrac x N)= \int_{\T}\, \rho_t (u) \varphi (u)\,du
\]
in $\P_{\eta^N}$--probability.    Moreover, the above convergence also holds in $\P_{\nu_N}$--probability by the dominated convergence Theorem. This concludes the proof.
\end{proof}

\section{The asymmetric case} \label{sec:asym}

\subsection{Microscopic mapping} The microscopic mapping in the asymmetric case is the same as in the symmetric case except that we do not have the extra factor $\theta$ since the system is defined on the infinite lattice. As in the symmetric case, let us label the empty sites from left to right in the exclusion process $\{\eta (t)\}$ on $\bL_M=\Z$, as follows:  
\lj{let $X_0(0)$ be the position of the first empty site to the right of (or at) the origin at the initial time. Then, for any $k>0$ (resp.~$k<0$), $X_k(0)$ is the position of the $(k+1)$-th empty site to the right (resp.~the $k$-th empty site to the left) of the origin at the initial time. We follow the trajectory of each of these empty sites throughout the configuration, and denote by $X_k (t)$ the position of the $k$-th empty site at time $t$.}
 Since the jumps are nearest neighbor, the orders of the empty sites are preserved along the evolution of the process, \emph{i.e.}~for any $t \geq 0$,
\[\ldots< X_{-1} (t) < X_0 (t) < X_1 (t) < \ldots\] 
For $y \in \Z$, let
\[\omega_y (t): = X_{y+1}(t) - X_y (t) - 1.\]
Then, as in Lemma \ref{lem:mapping},  $\{\omega (t)\}_{t\geqslant 0}$ is a Markov process on $\Gamma_M=\N^\Z$ which evolves according to the generator $N \genzr$ defined in \eqref{eq:DefLM}.   On the infinite line, both processes have the same scaling, \textit{i.e.}~throughout this section, we choose $M=N$.

As stated in Theorem \ref{thm:ex}, we need the following assumption on the initial profile: 

\begin{assumption}\label{ass:L2}
The initial density profile $\rho^{\rm ini}:\R\to[0,1]$ is integrable on $\R$, and bounded away from 1: 
\[ \rho^{\rm ini}(u)\leqslant \rho_\star <1, \qquad \text{for any }u\in\R.\]
\end{assumption}

Similar to Lemma \ref{lem:InitialLE}, the initial zero-range density profile in this case is given by 
\begin{equation}\label{ini_asy}
\alpha^{\rm ini} (v) = \frac{\rho^{\rm ini} (u)}{1 - \rho^{\rm ini} (u)}, \quad u \in \R,
\end{equation}
where
\[v = v(u) = \int_0^u (1-\rho^{\rm ini} (u^\prime)) du^\prime.\]
More precisely, for any continuous test function $\varphi:\R \to \R$ with compact support and for any $\delta >0$,
\[\lim_{N\to\infty}\P_{\nu_N}\bigg( \bigg| \frac{1}{N} \sum_{y \in \Z} \varphi\Big(\frac{y}{M}\Big) \omega_y(0) - \int_{\R}\varphi(v)\alpha^{\rm ini}(v)dv\bigg| > \delta \bigg)=0. \]
Since the proof is exactly the same as that of Lemma \ref{lem:InitialLE}, we do not repeat  it here.

\begin{lemma}[Law of large numbers]\label{lem:holes}
Let $\rho^{\rm ini}:\R\to[0,1]$ satisfy Assumption \ref{ass:L2} and $\alpha^{\rm ini}$ be defined as in \eqref{ini_asy}. 
Consider the entropy solution  $\alpha_t(v)$ of the hyperbolic equation \eqref{zrp:Hydro} with initial condition $\alpha^{\rm ini}$ (see Definition \ref{def:entropy}). 
	Then, for any $t\geqslant 0$, and any $\epsilon > 0$,
	\[
		\lim_{N \rightarrow \infty} \bb{P}_{\nu_N} \bigg( \bigg| \frac{ X_0 (t)}N - \sigma_t\bigg| > \epsilon  \bigg) = 0, \qquad \text{where } \sigma_t = \int_0^\infty (\alpha^{\rm ini}(v) - \alpha_t(v))\,dv.
	\]

\end{lemma}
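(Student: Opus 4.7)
The plan is to follow the general skeleton of Lemma \ref{lem:TP} (the symmetric analogue), replacing the ``test function $v\mapsto v$'' trick by a truncated indicator approach better suited to the infinite line and to the hyperbolic scaling. First, by the same Skorokhod representation argument as in Lemma \ref{lem:TP}, I reduce to a \emph{deterministic} sequence of initial configurations $\{\eta^N\}$ whose empirical measure converges to $\rho^{\mathrm{ini}}(u)\,du$. Coupling $\eta(t)$ with the facilitated zero-range process $\omega(t)$ through the microscopic mapping described above the statement, I note the fundamental identity
\[
X_0(t) - X_0(0) = - J_{-1,0}^{\omega}(t),
\]
where $J_{y,y+1}^\omega(t)$ denotes, as in \eqref{eq:current}, the net current of zero-range particles across the edge $(y,y+1)$ by time $t$ (in the hyperbolic scaling, generator $N\genzr$). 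Since $\rho^{\mathrm{ini}}\leq \rho_\star<1$, the first empty site to the right of the origin under $\nu_N$ is tight, so $X_0(0)/N\to 0$ in $\P_{\nu_N}$-probability.

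Next, for any $A>0$ and $Y_N:=\lfloor AN\rfloor$, telescoping $\omega_y(t)-\omega_y(0)=J_{y-1,y}^\omega(t)-J_{y,y+1}^\omega(t)$ from $y=0$ to $y=Y_N$ yields
\[
\frac{1}{N}\sum_{y=0}^{Y_N}\bigl(\omega_y(t)-\omega_y(0)\bigr)
\;=\; \frac{J_{-1,0}^\omega(t)}{N} - \frac{J_{Y_N, Y_N+1}^\omega(t)}{N}
\;=\; -\frac{X_0(t)-X_0(0)}{N} - \frac{J_{Y_N, Y_N+1}^\omega(t)}{N}.
\]
The left-hand side is, up to a smoothing step, controlled by the hydrodynamic limit for the FZRP (Theorem \ref{thm:zr}, asymmetric case): approximating $\mathbbm{1}_{[0,A]}$ from above and below by test functions $\varphi^{\pm}_{\delta,A}\in C_c^\infty(\R)$ agreeing with $\mathbbm{1}_{[0,A]}$ outside of $\delta$-neighborhoods of $\{0, A\}$, and controlling the $\delta$-boundary layer using attractiveness (domination by the equilibrium measure $\mu^\star_{\alpha_\star+1}$, as in the proof of Lemma \ref{lem:TP}), I obtain
\[
\frac{1}{N}\sum_{y=0}^{Y_N}\bigl(\omega_y(t)-\omega_y(0)\bigr) \;\xrightarrow[N\to\infty]{\P_{\nu_N}}\; \int_0^A \bigl(\alpha_t(v)-\alpha^{\mathrm{ini}}(v)\bigr)\,dv.
\]

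It then remains to show that, as $A\to\infty$, the truncation error $J_{Y_N, Y_N+1}^\omega(t)/N$ vanishes in probability, uniformly in $N$. Using the inequality $|J_{Y,Y+1}^\omega(t)|\leq \sum_{y>Y}\omega_y(t)+\sum_{y>Y}\omega_y(0)$ (obtained by writing the current as the difference of the mass to the right at times $t$ and $0$), I reduce this to controlling tail masses. For the $t=0$ contribution, attractiveness plus Assumption \ref{ass:L2} (which gives $\alpha^{\mathrm{ini}}\in L^1(\R)$) ensures $\P_{\nu_N}$-a.s.
\[
\limsup_{N\to\infty}\frac{1}{N}\sum_{y>AN}\omega_y(0)\;\leq\;\int_A^\infty \alpha^{\mathrm{ini}}(v)\,dv\;\xrightarrow[A\to\infty]{}0.
\]
For the $t>0$ contribution, I apply the same smoothed-indicator trick as above to $\mathbbm{1}_{[A,B]}$ and invoke Theorem \ref{thm:zr} again, giving convergence to $\int_A^B \alpha_t(v)\,dv$; since $\mathcal{G}$ is Lipschitz (hence entropy solutions preserve $L^1$-mass and have finite propagation speed bounded by $(2p-1)\|\mathcal{G}'\|_\infty\leq 2p-1$), the tail $\int_A^\infty \alpha_t(v)\,dv$ vanishes as $A\to\infty$, uniformly on compact time intervals. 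Combining all three convergences and letting first $N\to\infty$ then $A\to\infty$ yields
\[
\frac{X_0(t)}{N}\;\xrightarrow[N\to\infty]{\P_{\nu_N}}\;\int_0^\infty\bigl(\alpha^{\mathrm{ini}}(v)-\alpha_t(v)\bigr)\,dv\;=\;\sigma_t,
\]
as claimed.

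The main obstacle I expect is the tail control of the far-edge current $J_{Y_N, Y_N+1}^\omega(t)$. Unlike the symmetric case on the torus, where a martingale bound suffices, here mass can be transported a distance of order $N$ at hyperbolic scale, so the argument must genuinely combine attractiveness (to dominate $\omega(t)$ uniformly by an equilibrium process) with the integrability of $\alpha^{\mathrm{ini}}$ and the finite speed of propagation of entropy solutions to \eqref{zrp:Hydro}. The smoothing of indicators into compactly supported $C^\infty$ test functions, needed to apply Theorem \ref{thm:zr}, is technical but routine once the tail bound is in place.
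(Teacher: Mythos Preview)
Your proof is correct and follows essentially the same route as the paper: relate the displacement of $X_0$ to the zero-range current near the origin, telescope over $\{0,\dots,\lfloor KN\rfloor\}$, invoke Theorem~\ref{thm:zr} for the finite sum, and control the far-edge remainder using the integrability of $\alpha^{\mathrm{ini}}$. The only notable difference is that the paper bounds the tail $N^{-1}\sum_{y>KN}\omega_y(t)$ directly by $2N^{-1}\sum_{y>(K-C)N}\omega_y(0)$ via microscopic finite speed of propagation, which is a bit cleaner than your second appeal to the hydrodynamic limit and the PDE propagation speed.
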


\begin{proof}[Proof of Lemma \ref{lem:holes}]
As in the proof of Lemma \ref{lem:TP}, let $J^\omega_{y,y+1} (t)$ be the net current across the bond $(y,y+1)$ up to time $t$ throughout the evolution of $\omega$. Then 
\[X_0 (t) - X_0 (0) = - J^\omega_{0,1} (t).\]
Since $X_0 (0) / N \rightarrow 0$ in probability as $N \rightarrow \infty$, we only need to prove for every $\varepsilon > 0$,
\[\lim_{N \rightarrow \infty} \bb{P}_{\nu_N} \bigg( \bigg| \frac{ J_{0,1}^\omega (t)}{N} + \sigma_t\bigg| > \epsilon  \bigg) = 0.\]
By the conservation of the number of particles,
\[N^{-1} J_{0,1}^\omega (t) = N^{-1} \Big(\sum_{1 \leq y \leq K N} +  \sum_{y > KN} \Big) \{\omega_y (t) - \omega_y (0)\}.\]
The first sum above converges in probability, as $N \rightarrow \infty$ followed by $K \rightarrow \infty$, to $-\sigma_t$. For the second sum, it is bounded by 
\lj{\begin{equation}\label{eq:new2} N^{-1} \sum_{y > K N} [\omega_y (t) + \omega_y (0)] \leq  2 N^{-1}  \sum_{y \geq (K -2t) N}  \omega_y (0) + N^{-1}  \mathcal{N}_{t,K} ,\end{equation}
where $\mathcal{N}_{t,K}$ is the number of particles whose positions are smaller than $(K -2t) N$ at time $t$ but are greater than $KN$ initially.  Because the distribution of  $\omega_y(0)$ is stochastically bounded by a geometric distribution with parameter $\alpha_\star$ and the displacement of a particle to the left is bounded by a Poisson process with parameter $1$, by a standard large deviation argument, the expectation of  $\mathcal{N}_{t,K}$ is bounded uniformly in $N$. In particular, $N^{-1}  \mathcal{N}_{t,K}$ converges to zero in probability as $N \rightarrow \infty$. The first term on the right hand side of \eqref{eq:new2} also vanishes in the limit by the integrability of $\alpha^{\rm ini}$. This concludes the proof.}
\end{proof}

\subsection{Macroscopic mapping}

In the section, we mainly prove the following result.

\begin{lemma}[$\alpha \mapsto \rho$]\label{lem:macromapping}
	Let $\alpha_t (v)$ be the unique entropy solution to \eqref{zrp:Hydro} with initial data $\alpha^{\rm ini}$, and for any $t \geq 0,\, u \in \R$, let $v_t (u)$ be the unique point such that 
	\[u= \sigma_t  + \int_0^{v_t (u)} (1+\alpha_t (v^\prime)) d v^\prime,\] 
	with $\sigma_t$ defined as in Lemma \ref{lem:holes}. Then $\rho_t(u)$ defined as
	\[\rho_t(u) =  \frac{\alpha_t \circ v_t (u)}{1+\alpha_t \circ v_t (u)}  \]
	is the entropy solution to \eqref{ep:Hydro} with initial data $\rho^{\rm ini}$, where $\rho^{\rm ini}$ is defined by inverting \eqref{ini_asy}, \textit{i.e.}
	\[\rho^{\rm ini} (u) = \frac{\alpha^{\rm ini} (v_0 (u))}{1 + \alpha^{\rm ini} (v_0(u))}.\]
\end{lemma}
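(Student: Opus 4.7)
The plan is to verify that the function $\rho_t(u)$ defined by the mapping satisfies both the initial condition and the entropy inequality of Definition \ref{def:entropy}; uniqueness (Remark \ref{remark:uniqueness}) will then identify it as the unique entropy solution to \eqref{ep:Hydro} with initial datum $\rho^{\mathrm{ini}}$. For the initial condition, at $t=0$ one has $\sigma_0 = \int_0^\infty(\alpha^{\mathrm{ini}}-\alpha_0)\,dv = 0$ since $\alpha_0 = \alpha^{\mathrm{ini}}$, so $v_0(u)$ is defined by $u = \int_0^{v_0(u)}(1+\alpha^{\mathrm{ini}}(v'))\,dv'$, which is precisely the inverse of the change of variables $v(u)$ appearing in \eqref{ini_asy}. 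Substituting $\alpha^{\mathrm{ini}}(v(u)) = \rho^{\mathrm{ini}}(u)/(1-\rho^{\mathrm{ini}}(u))$ yields $\rho_0(u) = \rho^{\mathrm{ini}}(u)$ directly; the required $L^1_{\mathrm{loc}}$ convergence in item (ii) of Definition \ref{def:entropy} then follows from the $L^1_{\mathrm{loc}}$ time-continuity inherent to entropy solutions applied to $\alpha_t$, together with Lipschitz continuity of the mapping on uniformly bounded profiles.

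To establish the entropy inequality, fix a non-negative $\varphi\in C^{1,1}(\R_+\times\R)$ with compact support in $(0,\infty)\times\R$, and $c\in[0,1)$; set $a:=c/(1-c)\in[0,\infty)$. The idea is to apply the entropy inequality for $\alpha_t$ with constant $a$ and test function $\psi(t,v):=\varphi(t,u_t(v))$, where $u_t(v) := \sigma_t + \int_0^v(1+\alpha_t(v'))\,dv'$ denotes the inverse of $v_t$. Since $\partial_v u_t\geq 1$ and $\varphi$ is compactly supported in $u$, the function $\psi$ is Lipschitz with compact support in $(0,\infty)\times\R$. Formally, $\partial_v u_t = 1+\alpha_t$, and integrating the PDE for $\alpha_t$ on $[0,v]$ (together with the definition of $\sigma_t$) gives $\partial_t u_t(v) = -(2p-1)\mathcal{G}(\alpha_t(v))$. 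Plugging $\partial_t\psi = \partial_t\varphi + \partial_u\varphi\cdot\partial_t u_t$ and $\partial_v\psi = (1+\alpha_t)\partial_u\varphi$ into the $\alpha$-entropy inequality and then changing variables $u=u_t(v)$ (so that $dv = (1-\rho_t(u))\,du$), the identities $|\alpha-a| = (1+a)(1+\alpha)|\rho-c|$, $\mathrm{sgn}(\alpha-a) = \mathrm{sgn}(\rho-c)$, $\mathcal{G}(\alpha) = (1+\alpha)\mathfrak{H}(\rho)$, and $(1+a)c=a$ combine after a short algebraic simplification to turn the $\alpha$-entropy inequality for $\psi$ into exactly $(1+a)$ times the desired $\rho$-entropy inequality for $\varphi$. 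The boundary case $c=1$ is then obtained by letting $c\to 1^-$ and invoking dominated convergence (both integrands being uniformly bounded in $c$).

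The main obstacle is that the preceding computation is formal: since $\alpha_t$ is only $L^\infty$, the relation $\partial_t u_t = -(2p-1)\mathcal{G}(\alpha_t)$ holds only distributionally, and $\psi$ is only Lipschitz rather than $C^{1,1}$. I would resolve this by vanishing viscosity. Let $\alpha^\varepsilon$ be the classical solution of $\partial_t\alpha^\varepsilon + (2p-1)\partial_v\mathcal{G}(\alpha^\varepsilon) = \varepsilon\partial_v^2\alpha^\varepsilon$ starting from a smooth mollified initial profile: then $\alpha^\varepsilon$ is smooth, all identities above hold pointwise up to a controlled $O(\varepsilon)$ viscous correction, and the corresponding $\rho^\varepsilon$ defined by the same mapping satisfies the analogous entropy inequality up to an $O(\varepsilon)$ error. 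As $\varepsilon\to 0$, $\alpha^\varepsilon\to\alpha$ in $L^1_{\mathrm{loc}}$ by Kruzkov stability, and the uniform $L^\infty$ bound on $\alpha^\varepsilon$ inherited from the maximum principle transfers through the mapping to $\rho^\varepsilon\to\rho$ in $L^1_{\mathrm{loc}}$ via arguments analogous to those of Proposition \ref{pro:mapping}. The entropy inequality for $\rho$ then follows by passing to the limit, which concludes the identification of $\rho_t$ as the unique entropy solution to \eqref{ep:Hydro}.
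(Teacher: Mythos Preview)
Your approach is correct and shares the overall vanishing-viscosity strategy with the paper, but the execution differs in a meaningful way. The paper does \emph{not} transform the entropy inequality directly. Instead, it chooses a specific viscous regularization of the $\alpha$-equation, namely $\partial_t\alpha^\varepsilon + (2p-1)\partial_v\mathcal{G}^\varepsilon(\alpha^\varepsilon)=\varepsilon\,\partial_v^2\bigl(\tfrac{\alpha^\varepsilon}{1+\alpha^\varepsilon}\bigr)$ with a mollified flux $\mathcal{G}^\varepsilon(r):=(1+r)\,\mathfrak{H}^\varepsilon\bigl(\tfrac{r}{1+r}\bigr)$, designed so that the mapping takes $\alpha^\varepsilon$ \emph{exactly} to the solution $\rho^\varepsilon$ of the standard viscous equation $\partial_t\rho^\varepsilon + (2p-1)\partial_u\mathfrak{H}^\varepsilon(\rho^\varepsilon)=\varepsilon\,\partial_u^2\rho^\varepsilon$ (Lemma~\ref{lem:alphatorho}). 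Classical Kru\v{z}kov theory then gives $\rho^\varepsilon\to\tilde\rho$, the entropy solution to \eqref{ep:Hydro}; separately, one shows $\rho^\varepsilon\to\rho$ in $L^1_{\mathrm{loc}}$ by proving $\alpha_t^\varepsilon\circ v_t^\varepsilon\to\alpha_t\circ v_t$ (using the BV bound \eqref{alpha_eps_v} on $\alpha^\varepsilon$ and uniform convergence of $v_t^\varepsilon$ on compacts), and concludes $\rho=\tilde\rho$.

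Your route---carrying the Kru\v{z}kov inequality for $\alpha^\varepsilon$ through the change of variables and controlling the $O(\varepsilon)$ corrections---is more transparent: it makes visible \emph{why} the entropy condition survives the mapping (the identities $|\alpha-a|=(1+\alpha)(1+a)|\rho-c|$ and $(1+a)\mathcal{G}(\alpha)-(1+\alpha)\mathcal{G}(a)=(1+a)(1+\alpha)\bigl(\mathfrak{H}(\rho)-\mathfrak{H}(c)\bigr)$ are exactly what is needed). Two points to tighten: (i) since $\mathcal{G}$ is only Lipschitz and not $C^1$ at $r=1$, your viscous problem does not have a \emph{classical} solution unless you also mollify the flux, as the paper does; (ii) the $O(\varepsilon)$ errors from $\partial_v^2\psi^\varepsilon$ and from the corrected relation $\partial_t u_t^\varepsilon=-(2p-1)\mathcal{G}(\alpha_t^\varepsilon)+\varepsilon\,\partial_v\alpha_t^\varepsilon$ both involve $\int|\partial_v\alpha_t^\varepsilon|\,dv$, so you should invoke the uniform BV bound $\|\partial_v\alpha_t^\varepsilon\|_{L^1}\le TV(\alpha^{\mathrm{ini}})$ explicitly. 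With these adjustments your argument goes through; what you lose relative to the paper is the clean structural statement that the mapping intertwines the two viscous flows, what you gain is a direct algebraic explanation of entropy preservation.
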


\medspace

As in the symmetric case, before proving Lemma \ref{lem:macromapping}, we first smooth out solutions to the hyperbolic equations.  Let $\phi \in C_c^\infty (\R)$ be a symmetric mollifier, \textit{i.e.}~which satisfies
\lj{\[\phi (u) \geq 0, \qquad \phi (u) = \phi(-u), \qquad \supp (\phi) \subset (-1,1),\qquad \int_{\R} \phi (u) du = 1. \]}
For any $\varepsilon > 0$, take $\phi_\varepsilon (u) = \varepsilon^{-1} \phi (u/\varepsilon).$   Let   $\alpha^{\rm ini}_\varepsilon = \alpha^{\rm ini} * \phi_\varepsilon$ and
\[ \mathfrak{H}^{\varepsilon} (r)=  (\mathfrak{H} * \phi_\varepsilon) (r):=\int_{\R} \mathfrak{H} (r - r^\prime) \phi_\varepsilon (r^\prime) d r^\prime,\qquad \mathcal{G}^\varepsilon (r)  = (1+r) \mathfrak{H}^\varepsilon \big(\tfrac{r}{1+r}\big).\]
Note that we define $ \mathcal{G}^\varepsilon$ in the above way (recall also \eqref{eq:relation}) instead of convoluting  the function $\mathcal{G}$ with the mollifier $\phi_\varepsilon$,  in order to prove Lemma \ref{lem:alphatorho} below.

\begin{lemma}\label{lemma:alphaeps} Let $\alpha^{\rm ini} \in L^1 (\R) \cap L^\infty (\R) \cap BV (\R)$. Then there exists a unique smooth solution $\alpha^\varepsilon$ to 
		\begin{equation}\label{pertur:zrp}
			\begin{cases}
				\partial_t \alpha^\varepsilon_t + (2p-1) \partial_v \mathcal{G}^\varepsilon (\alpha^\varepsilon_t) = \displaystyle \varepsilon \; \partial_v^2 \bigg(\frac{\alpha^\varepsilon_t }{1+\alpha^\varepsilon_t}\bigg), \quad t > 0, v \in \R,\\
				\alpha^\varepsilon_0 = \alpha^{\rm ini}_\varepsilon.
			\end{cases}
		\end{equation} 
Moreover, there exists a subsequence still denoted by $\alpha^\varepsilon$ such that 
\begin{equation}\label{conver_alpha}
	\lim_{\varepsilon \to 0} \alpha^\varepsilon = \alpha \quad \text{in }\; L^1_{\rm loc} (\R_+ \times \R) \quad \text{and} \quad \text{a.e. in }\; \R_+ \times \R,
\end{equation}
where $\alpha$ is the unique entropy solution to \eqref{zrp:Hydro} with initial distribution $\alpha^{\rm ini}$.
\end{lemma}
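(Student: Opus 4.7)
The equation \eqref{pertur:zrp} is a vanishing viscosity approximation of the conservation law \eqref{zrp:Hydro}, in which the viscous term $\varepsilon \partial_v^2 F(\alpha^\varepsilon)$ with $F(r) = r/(1+r)$ can be rewritten as $\varepsilon \partial_v\bigl( F'(\alpha^\varepsilon) \partial_v \alpha^\varepsilon \bigr)$. Since $F'(r)=(1+r)^{-2}$ is smooth and strictly positive, and since a uniform $L^\infty$ bound on $\alpha^\varepsilon$ will be established below, the equation is in fact uniformly parabolic. The plan is the classical three-step vanishing viscosity scheme: first, produce a unique smooth solution; second, derive uniform $L^\infty$, $L^1$ and $BV$ bounds on it; third, extract a subsequential limit and identify it as the Kruzkov entropy solution of \eqref{zrp:Hydro}.

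\medskip

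\emph{Existence, uniqueness, and a priori bounds.} Since $\alpha^{\rm ini}_\varepsilon = \alpha^{\rm ini} * \phi_\varepsilon$ is smooth and bounded, and $\mathcal{G}^\varepsilon$ and $F$ are smooth, classical quasilinear parabolic theory (\emph{e.g.}~Ladyzhenskaya--Solonnikov--Ural'tseva) yields a unique smooth local solution. A maximum principle argument, using that $0$ and $\|\alpha^{\rm ini}\|_{L^\infty}$ are sub and super-solutions, gives the uniform bound $0 \leq \alpha^\varepsilon(t,v) \leq \|\alpha^{\rm ini}\|_{L^\infty}$; this furnishes a positive lower bound on $F'(\alpha^\varepsilon)$ and extends the smooth solution globally in time. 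Integrating \eqref{pertur:zrp} in $v$, the flux and diffusion terms vanish at infinity, yielding $\|\alpha^\varepsilon_t\|_{L^1(\R)} \leq \|\alpha^{\rm ini}_\varepsilon\|_{L^1(\R)} \leq \|\alpha^{\rm ini}\|_{L^1(\R)}$. For the $BV$ bound, differentiating the equation in $v$ shows that $w := \partial_v \alpha^\varepsilon$ solves a linear transport--diffusion equation, and the standard $L^1$-contraction for such equations gives $\|w(t,\cdot)\|_{L^1(\R)} \leq \|\partial_v \alpha^{\rm ini}_\varepsilon\|_{L^1(\R)} \leq TV(\alpha^{\rm ini})$, using $TV(\phi_\varepsilon * f) \leq TV(f)$ for the mollified initial data.

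\medskip

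\emph{Compactness and identification of the limit.} The uniform $L^\infty \cap BV$ bounds furnish spatial compactness by Helly's selection theorem. Time equicontinuity in $L^1_{\rm loc}$ is deduced directly from the equation: the uniform $BV$ bound controls $\partial_v \mathcal{G}^\varepsilon(\alpha^\varepsilon)$ in $L^\infty_t L^1_v$ and, together with parabolic estimates, $\varepsilon \partial_v^2 F(\alpha^\varepsilon)$ in a suitable dual space, so that $\|\alpha^\varepsilon(t+h,\cdot) - \alpha^\varepsilon(t,\cdot)\|_{L^1_{\rm loc}} = O(h)$ uniformly in $\varepsilon$. A Kolmogorov--Fr\'echet compactness argument then extracts a subsequence converging in $L^1_{\rm loc}(\R_+\times\R)$ and a.e.\ to some $\alpha$. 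To verify that $\alpha$ is an entropy solution, the plan is to apply Kruzkov's argument: for a smooth convex approximation $\eta_\delta$ of $r \mapsto |r-c|$ and a nonnegative $\varphi \in C_c^\infty((0,\infty)\times\R)$, testing \eqref{pertur:zrp} against $\eta'_\delta(\alpha^\varepsilon - c)\varphi$ and using the convexity of $\eta_\delta$ on the viscous term produces a nonpositive remainder, while $\mathcal{G}^\varepsilon \to \mathcal{G}$ uniformly on bounded sets. Sending $\delta \to 0$ and then $\varepsilon \to 0$ produces the entropy inequality of Definition \ref{def:entropy}; the initial condition \eqref{eq:initZR} follows from the uniform time-equicontinuity at $t=0$ together with $\alpha^{\rm ini}_\varepsilon \to \alpha^{\rm ini}$ in $L^1_{\rm loc}$. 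Finally, uniqueness of entropy solutions (Remark \ref{remark:uniqueness}) upgrades this subsequential convergence to convergence of the full family.

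\medskip

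The main obstacle is the nonlinear form of the viscous term, which rules out a direct energy estimate for the $BV$ bound and forces the Kruzkov step to accommodate a variable-coefficient diffusion. Both difficulties will be overcome by the uniform $L^\infty$ bound, which renders $F'(\alpha^\varepsilon)$ smooth and strictly positive, so that the argument reduces, essentially line for line, to the standard vanishing viscosity treatment of scalar conservation laws.
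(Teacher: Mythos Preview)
Your proposal is correct and follows essentially the same classical vanishing-viscosity scheme as the paper: uniform $L^\infty$, $L^1$, and $BV$ bounds, compactness, and then the Kruzkov entropy argument with $\mathcal{G}^\varepsilon \to \mathcal{G}$. The only notable difference is in the time-compactness step: the paper obtains the cleaner bound $\|\partial_t \alpha^\varepsilon_t\|_{L^1(\R)} \leq C\,TV(\alpha^{\rm ini})$ directly (by differentiating the equation in $t$ and using $L^1$-contraction, with the mollification of the initial data absorbing the factor $\varepsilon^{-1}$ coming from the viscous term at $t=0$), which yields $W^{1,1}_{\rm loc}$-compactness in one stroke, whereas your time-equicontinuity argument via ``$\varepsilon\,\partial_v^2 F(\alpha^\varepsilon)$ in a suitable dual space'' is less explicit. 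The paper also spells out the key identity $E'(\alpha^\varepsilon)\,\partial_v^2 F(\alpha^\varepsilon) = \partial_v^2 \tilde{E}(\alpha^\varepsilon) - E''(\alpha^\varepsilon)F'(\alpha^\varepsilon)(\partial_v\alpha^\varepsilon)^2$ with $\tilde{E}' = E'F'$, which is precisely the ``nonpositive remainder'' you allude to.
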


\begin{proof} We only sketch the proof and refer the readers to \cite[Section 2.4]{malek1996} for details. The existence and uniqueness of the smooth solution follow immediately from  \cite[Lemma 2.4.13]{malek1996}. Moreover, for all $t \geq 0$ we have
		\begin{align}
			||\alpha^\varepsilon_t||_{L^\infty (\R)} & \leq ||\alpha^{\rm ini}||_{L^\infty (\R)},\\
				||\partial_v \alpha^\varepsilon_t||_{L^1 (\R)}& \leq TV (\alpha^{\rm ini}),\label{alpha_eps_v}\\
					||\partial_t \alpha^\varepsilon_t||_{L^1 (\R)}& \leq C\, TV (\alpha^{\rm ini}).
		\end{align}
In particular, the sequence $\{\alpha^\varepsilon\; : \;  \varepsilon > 0\}$ is bounded in the space $L^\infty ((0,\infty) \times \R) \cap W^{1,1}_{\rm loc} ((0,\infty) \times \R)$, where $W^{1,1}_{\rm loc} ((0,\infty) \times \R)$ is the local version of the standard Sobolev space, \textit{i.e.}~the set of functions $f \in L^1((0,\infty) \times \R)$ such that the restriction of $f$ to any compact set is in $L^1$, as well as the restriction of $\nabla f$, \emph{cf.}~\cite{malek1996} for example. By a \lj{diagonal} argument, there exists a subsequence, still denoted by $\alpha^\varepsilon$, and a function $\alpha$ such that  \eqref{conver_alpha} holds. Moreover, the almost everywhere convergence in \eqref{conver_alpha} is uniform over compacts sets in $\R_+ \times \R$.

Next we show the limit $\alpha$  is the entropy solution to \eqref{zrp:Hydro} with initial condition $\alpha^{\rm ini}$. Let $(E,F^\varepsilon)$ be a convex entropy-entropy flux pair, \textit{i.e.}: take $E\in C^2(\R)$ any convex function, and define $F^\varepsilon$ through the relation \lj{$(\mathcal{G}^\varepsilon)^\prime E^\prime  = (F^\varepsilon)^\prime$}. Multiplying \eqref{pertur:zrp} by $E^\prime (\alpha^\varepsilon)$, a straightforward computation gives
\begin{align*}
\partial_t E(\alpha^{\rm \varepsilon}_t)  + (2p-1) \partial_v F^\varepsilon (\alpha^{\rm \varepsilon}_t)& = \varepsilon E^\prime (\alpha^{\rm \varepsilon}_t) \partial_v^2 \bigg(\frac{\alpha^\varepsilon_t }{1+\alpha^\varepsilon_t}\bigg)\\
&= \varepsilon \partial_v^2 \tilde{E} (\alpha^\varepsilon_t) - \varepsilon E^{\prime \prime} (\alpha^\varepsilon_t) (1+\alpha^\varepsilon_t)^{-2} (\partial_v \alpha^\varepsilon_t)^2 \leq  \varepsilon \partial_v^2 \tilde{E} (\alpha^\varepsilon_t),
\end{align*}
where $\tilde{E} (\alpha)$ is \lj{a primitive} function of $(1+\alpha)^{-2}E^\prime (\alpha)$, and the last inequality comes from the fact that $E$ is convex. Recall that we denote by $\langle \cdot,\cdot\rangle$ the standard $L^2(\R)$ scalar product, the weak formula of the above inequality for $0 \leq \varphi \in C_c^2 (\R \times \R)$ reads
\begin{equation}\label{entropy_ineqn}
	 \langle E(\alpha^\varepsilon_0 ), \varphi_0\rangle+ \int_0^\infty \langle E(\alpha^\varepsilon_t), \partial_t \varphi_t\rangle +(2p-1) \langle F^\varepsilon (\alpha^\varepsilon_t), \partial_v \varphi_t \rangle\,dt \geq - \varepsilon \int_0^\infty \langle \tilde{E} (\alpha^\varepsilon_t) \partial_v^2 \varphi_t\rangle\,dt.
\end{equation}
It can be easily checked that the right hand side of the above inequality converges to zero as $\varepsilon \rightarrow 0$ and that
\[\lim_{\varepsilon \rightarrow 0} E(\alpha^\varepsilon)  = E (\alpha) \quad \text{in $L^1_{\rm loc} (\R_+ \times \R)$}.\]
Moreover,
\begin{equation}\label{f_epsilon}
\lim_{\varepsilon \rightarrow 0} F^\varepsilon (\alpha^\varepsilon)  = F (\alpha) \quad \text{in $L^1_{\rm loc} (\R_+ \times \R)$},
\end{equation}
where $F$ satisfies $E^\prime \mathcal{G}^\prime = F^\prime$. To prove the last equation, first note that
\begin{multline*}
	|F^\varepsilon (\alpha^\varepsilon_t)  - F (\alpha_t)| = \Big| \int_0^{\alpha^\varepsilon_t} E^\prime (\beta) (\mathcal{G}^\varepsilon)^\prime (\beta) d \beta -  \int_0^{\alpha_t} E^\prime (\beta) \mathcal{G}^\prime (\beta) d \beta\Big|\\
	\leq \int_{-K}^K \big| E^\prime (\beta)  \big| \big|  (\mathcal{G}^\varepsilon)^\prime  (\beta) -   \mathcal{G}^\prime (\beta) \big| d \beta + \Big| \int_{\alpha_t}^{\alpha^\varepsilon_t} E^\prime (\beta) \mathcal{G}^\prime (\beta) d\beta \Big|,
\end{multline*}
where $K = ||\alpha^{\rm ini}||_{L^\infty (\R)}$. This yields
\[\lim_{\varepsilon \rightarrow 0} F^\varepsilon (\alpha^\varepsilon) = F (\alpha)\]
uniformly over compact sets  in $\R_+ \times \R$, which proves \eqref{f_epsilon} by dominated convergence theorem. Letting  $\varepsilon \rightarrow 0$ in \eqref{entropy_ineqn}, we have that $\alpha$ is the unique entropy solution to \eqref{zrp:Hydro} with initial distribution $\alpha^{\rm ini}$. This completes the proof.
\end{proof}

 Let us define
\[\sigma_t^\varepsilon = \int_0^\infty (\alpha^{\rm ini}_\varepsilon (v) - \alpha^\varepsilon_t(v))\,dv,\]
and for any $t \geq 0,\, u \in \R$, let $v^\varepsilon_t (u)$ be the unique point such that 
\[u= \sigma_t^\varepsilon  + \int_0^{v_t^\varepsilon (u)} (1+\alpha_t^\varepsilon (v^\prime)) d v^\prime.\] 
Finally, define
\begin{equation}
\label{eq:defrhoep}
\rho^\varepsilon_t(u) =  \frac{\alpha_t^\varepsilon \circ v_t^\varepsilon (u)}{1+\alpha_t^\varepsilon \circ v_t^\varepsilon (u)}.
\end{equation}
After a tedious but elementary calculation, we get the following
\begin{lemma}[$\alpha^\varepsilon \mapsto \rho^\varepsilon$]\label{lem:alphatorho}
The function $\rho^\varepsilon$ defined by \eqref{eq:defrhoep} is the unique weak solution to the following perturbation of the hyperbolic equation \eqref{ep:Hydro}
\begin{equation}\label{pertur:ep}
	\begin{cases}
		\partial_t \rho^\varepsilon_t + (2p-1) \partial_u \mathfrak{H}^\varepsilon (\rho^\varepsilon_t) = \varepsilon \partial_u^2 \rho^\varepsilon_t, \quad t > 0, u \in \R,\\
		\rho^\varepsilon_0= \rho^{\rm ini}_\varepsilon,
	\end{cases}
\end{equation} 
where
	\[\rho^{\rm ini}_\varepsilon (u) = \frac{\alpha^{\rm ini}_\varepsilon (v_0^\varepsilon (u))}{1 + \alpha^{\rm ini}_\varepsilon (v_0^\varepsilon(u))}.\]
\end{lemma}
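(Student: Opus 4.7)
The approach will be to perform the explicit change of variables $u\leftrightarrow v$ dictated by the definition of $v_t^\varepsilon$ and verify the perturbed exclusion equation \eqref{pertur:ep} pointwise by chain rule. Setting $U(v,t):=\sigma_t^\varepsilon+\int_0^v(1+\alpha_t^\varepsilon(v'))dv'$, the smoothness and positivity $1+\alpha^\varepsilon\geq 1$ from Lemma \ref{lemma:alphaeps} guarantee that $U(\cdot,t)$ is a $C^2$ diffeomorphism of $\R$ whose inverse $V(u,t)=v_t^\varepsilon(u)$ satisfies $V_u=1-\rho_t^\varepsilon$. All manipulations below are therefore legitimate.

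The first concrete task is to compute $U_t$ in closed form. Using the PDE \eqref{pertur:zrp} in both $\partial_t\sigma_t^\varepsilon=-\int_0^\infty\partial_t\alpha_t^\varepsilon\,dv'$ and the integrand of $\int_0^v\partial_t\alpha_t^\varepsilon\,dv'$, the boundary contributions at $v=0$ cancel algebraically and those at $v=+\infty$ vanish by the $L^1\cap BV$ control provided by Lemma \ref{lemma:alphaeps}. This yields the clean identity
\[U_t(v,t) = -(2p-1)\mathcal{G}^\varepsilon(\alpha_t^\varepsilon(v))+\varepsilon\,\partial_v\psi(\alpha_t^\varepsilon(v)), \qquad \psi(\alpha):=\tfrac{\alpha}{1+\alpha}.\]
Differentiating the identity $u=U(V(u,t),t)$ in $t$ and invoking the algebraic relation $(1-\rho)\mathcal{G}^\varepsilon(\alpha)=\mathfrak{H}^\varepsilon(\rho)$, a direct consequence of $\mathcal{G}^\varepsilon(r)=(1+r)\mathfrak{H}^\varepsilon(r/(1+r))$, then produces the closed form $V_t=(2p-1)\mathfrak{H}^\varepsilon(\rho_t^\varepsilon)-\varepsilon(1-\rho_t^\varepsilon)\psi_v(\alpha_t^\varepsilon)$.

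The final step is a chain-rule computation on $\rho_t^\varepsilon(u)=\psi(\alpha_t^\varepsilon)\circ V$. Using $\psi'(\alpha)=(1-\rho)^2$ and $(\mathcal{G}^\varepsilon)'(\alpha)=\mathfrak{H}^\varepsilon(\rho)+(1-\rho)(\mathfrak{H}^\varepsilon)'(\rho)$, the hyperbolic contributions in $\partial_t\rho=\psi'(\alpha)\partial_t\alpha+\psi_vV_t$ split into two pieces: those proportional to $\mathfrak{H}^\varepsilon(\rho)\rho_u/(1-\rho)$ cancel exactly, leaving $-(2p-1)(\mathfrak{H}^\varepsilon)'(\rho)\rho_u=-(2p-1)\partial_u\mathfrak{H}^\varepsilon(\rho)$. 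The viscous contributions assemble into $\varepsilon[(1-\rho)^2\psi_{vv}-\rho_u^2/(1-\rho)]$, which a second chain rule applied to $\rho_u=(1-\rho)\psi_v$ identifies with $\varepsilon\partial_u^2\rho_t^\varepsilon$. The initial condition $\rho_0^\varepsilon=\rho^{\rm ini}_\varepsilon$ is tautological from the definitions since $\sigma_0^\varepsilon=0$, and uniqueness of the weak solution to \eqref{pertur:ep} follows from classical parabolic theory because $\mathfrak{H}^\varepsilon$ is smooth and the added diffusion makes the equation uniformly parabolic.

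The only step requiring genuine care is the computation of $U_t$: the identity relies on the fact that the drift $\sigma_t^\varepsilon$ is tuned precisely to absorb the boundary contributions at $v=0$ generated by integrating the PDE, and on the decay of $\alpha_t^\varepsilon$ and $\psi_v(\alpha_t^\varepsilon)$ at $v=+\infty$. The $L^1(\R)\cap L^\infty(\R)\cap BV(\R)$ estimates from Lemma \ref{lemma:alphaeps} are exactly what is needed to justify this; once $U_t$ is in hand, the remainder of the verification is purely algebraic bookkeeping between the identities involving $\mathcal{G}^\varepsilon$, $\mathfrak{H}^\varepsilon$ and $\psi$.
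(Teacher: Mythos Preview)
Your proposal is correct and is precisely the ``tedious but elementary calculation'' that the paper invokes without writing out; the paper gives no proof beyond that phrase, so your explicit change-of-variables verification via the identities $U_v=1/(1-\rho^\varepsilon)$, $U_t=-(2p-1)\mathcal{G}^\varepsilon(\alpha^\varepsilon)+\varepsilon\partial_v\psi(\alpha^\varepsilon)$, and the algebraic relation $(1-\rho)\mathcal{G}^\varepsilon(\alpha)=\mathfrak{H}^\varepsilon(\rho)$ is exactly what was intended. The one place to be slightly more careful is the vanishing of $\partial_v\psi(\alpha_t^\varepsilon)$ at $v=+\infty$: the $BV$ bound alone does not give pointwise decay of the derivative, but the uniform parabolicity of \eqref{pertur:zrp} together with $\alpha_\varepsilon^{\rm ini}\in L^1\cap L^\infty$ yields enough smoothness and spatial decay of $\partial_v\alpha_t^\varepsilon$ for $t>0$; alternatively one can simply \emph{define} $\sigma_t^\varepsilon$ through the ODE $\partial_t\sigma_t^\varepsilon=-(2p-1)\mathcal{G}^\varepsilon(\alpha_t^\varepsilon(0))+\varepsilon\partial_v\psi(\alpha_t^\varepsilon)(0)$ and check afterwards that it agrees with the integral formula, which sidesteps the issue entirely.
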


\medspace

In the same spirit as in Lemma \ref{lemma:alphaeps},  there exists a subsequence still denoted by $\rho^\varepsilon$ such that 
\begin{equation}
	\lim_{\varepsilon \to 0} \rho^\varepsilon = \widetilde{\rho} \quad {\rm in}\; L^1_{\rm loc} (\R_+ \times \R) \quad \text{and} \quad \text{a.e.~in }\; \R_+ \times \R,
\end{equation}
where $\widetilde{\rho}$ is the unique entropy solution to \eqref{ep:Hydro} with initial condition $\rho^{\rm ini}$ as defined in Lemma \ref{lem:macromapping}.
We are now ready to prove Lemma \ref{lem:macromapping}.
\begin{proof}[Proof of Lemma \ref{lem:macromapping}]  We only need to prove
\[
\lim_{\varepsilon \to 0} \rho^\varepsilon = \rho \quad {\rm in}\; L^1_{\rm loc} (\R_+ \times \R)
\]
or equivalently,
\begin{equation}\label{convergence}
	\lim_{\varepsilon \to 0} \alpha_t^\varepsilon \circ v_t^\varepsilon = \alpha_t \circ v_t \quad {\rm in}\; L^1_{\rm loc} (\R_+ \times \R).
\end{equation}
Indeed, this implies $\rho = \widetilde{\rho}$ and whence concludes the proof. 

We claim that 
\[	\lim_{\varepsilon \to 0} \sigma^\varepsilon_t = \sigma_t \quad \text{for a.e.} \;t \in \R_+.\]Indeed, we have $\lim_{\varepsilon \rightarrow 0} ||\alpha^{\rm ini}_\varepsilon - \alpha^{\rm ini}||_{L^1 (\R)} = 0$ and for every $K > 0$,
\begin{multline*}
\limsup_{\varepsilon\to 0}	\Big| \int_0^\infty \alpha^{\varepsilon}_t (v) - \alpha_t(v) dv \Big| \leq \limsup_{\varepsilon\to 0} \Big| \int_0^K  \alpha^{\varepsilon}_t (v) - \alpha_t(v) dv  \Big| \\
+ \limsup_{\varepsilon\to 0} \int_K^\infty \alpha^{\varepsilon}_t (v) dv + \int_K^\infty \alpha_t (v) dv.
\end{multline*}
The first term on the right-hand side is zero by Lemma \ref{lemma:alphaeps}, and the last term converges to zero as $K \rightarrow \infty$ by the integrability of $\alpha_t$. For the second term, we have
\begin{multline*}
\int_K^\infty \alpha^{\varepsilon}_t (v) dv \leq \int_{|v| > K} \alpha^{\varepsilon}_t (v) dv = \int_{\R} \alpha_t (v) dv - \int_{|v| \leq K} \alpha^{\varepsilon}_t (v) dv \\
 \leq \int_{|v| > K} \alpha_t (v) dv + \int_{|v| \leq K}\big| \alpha^{\varepsilon}_t (v) - \alpha_t(v) \big|dv,
\end{multline*}
which vanishes as $\varepsilon \rightarrow 0,\,K \rightarrow \infty.$ This proves the claim.
Together with \eqref{conver_alpha}, we have
\[\lim_{\varepsilon \to 0} u^\varepsilon_t (v) = u_t (v) \]
uniformly over compact intervals in the space variable for a.e.~$t \in \R_+$. Since $u^\varepsilon_t$ and $u_t$ are strictly increasing,  we have
\begin{equation}\label{v_eps_to_v}
\lim_{\varepsilon \to 0} v^\varepsilon_t (u) = v_t (u)
\end{equation}
uniformly over compact intervals in the space variable for a.e.~$t \in \R_+$. We now write, for any $K > 0$,
\begin{multline}\label{maa1}
	\int_{-K}^K |\alpha_t^\varepsilon \circ v_t^\varepsilon (u) -  \alpha_t \circ v_t (u)| du \leq
	\int_{-K}^K |\alpha_t^\varepsilon \circ v_t^\varepsilon (u) -  \alpha_t^\varepsilon \circ v_t (u)| du\\
	+ \int_{-K}^K |\alpha_t^\varepsilon \circ v_t (u) -  \alpha_t \circ v_t (u)| du.
\end{multline}
By \eqref{alpha_eps_v} and \eqref{v_eps_to_v}, the first term on the right-hand side converges to zero as $\varepsilon \rightarrow 0$. The second term on the right-hand side of \eqref{maa1} also vanishes in the limit by \eqref{conver_alpha}. Therefore, we have proved
\[	\lim_{\varepsilon \to 0} \alpha_t^\varepsilon \circ v_t^\varepsilon = \alpha_t \circ v_t \quad {\rm in}\; L^1_{ \rm loc} (  \R)\]
for a.e.~$t \in \R$. Since $\|\alpha^\varepsilon\|_{L^\infty (\R_+ \times \R)} \leq \|\alpha^{\rm ini}\|_{L^\infty (\R_+ \times \R)}$, \eqref{convergence} follows from the dominated convergence Theorem. This concludes the proof.
\end{proof}

\subsection{Proof of Theorem \ref{thm:ex} in the asymmetric case}\label{sec:conclasym}

\begin{proof}[Proof of Theorem \ref{thm:ex} in the asymmetric case]
	Observe that for every continuous function $\varphi: \bb{R} \rightarrow \bb{R}$ with compact support,
	\begin{equation}
		\begin{aligned}
			\frac{1}{N} \sum_{x\in\bb{Z}} \varphi(\tfrac{x}{N}) \eta_x (t)  = \frac{1}{N} \sum_{x\in\bb{Z}} \varphi(\tfrac{x}{N}) - \frac{1}{N} \sum_{y \in\bb{Z}} \varphi (X_y (t)/N).
		\end{aligned}
	\end{equation} 
	Following the same argument in Subsection \ref{sec:conclsym},
	\begin{equation}\label{mapping1}
		\lim_{N \rightarrow \infty} \frac{1}{N} \sum_{x\in\bb{Z}} \varphi (\tfrac{x}{N}) \eta_x (t) = \int_{\bb{R}} \varphi (u)du - \int_{\bb{R}} \varphi \Big( \sigma_t  + \int_0^v (1+\alpha_t (v^\prime)) d v^\prime  \Big) \,dv 
	\end{equation} in $\P_{\nu_N}$ -- probability.
	Making the change of variables $v \mapsto v_t (u)$ in \eqref{mapping1}, where $v_t(u)$ is defined in Lemma \ref{lem:macromapping}, we have
	\begin{equation}\label{mapping2}
		\lim_{N \rightarrow \infty} \frac{1}{N} \sum_{x\in\bb{Z}} \varphi (\tfrac{x}{N}) \eta_x (t) = \int_{\bb{R}} \varphi (u) \frac{\alpha_t \circ v_t (u)}{1+\alpha_t \circ v_t (u)} du  \quad \text{in $\P_{\nu_N}$ -- probability.}
	\end{equation}
We conclude the proof by Lemma \ref{lem:macromapping}.
\end{proof}

\section{Proof of Theorem \ref{thm:zr}: the hydrodynamic limits for the zero-range process} \label{sec:hydroZR}

\subsection{Basic coupling and initial distribution} 
\label{sec:coupling}

The proof of the hydrodynamic limits in both the symmetric and asymmetric cases for the zero-range process strongly relies on its attractiveness property. The latter allows to couple two processes $(\omega (t),\zeta (t))$, each one evolving according to the generator $\genzr$, in a way that preserves their order.  The generator of the coupled process $(\omega (t),\zeta (t))$ is $\tgenzr$ which  is defined by making a particle jump from site $x$ to $x+1$ (resp. $x-1$) at rate $p$ (resp. $p'$) 
\begin{itemize}
\item in both processes $\omega$ and $\zeta$ if both $\omega_x\geq 2$ and $\zeta_x\geq 2$,
\item only in $\omega$ and not in $\zeta$ if $\omega_x\geq 2$ and $\zeta_x < 2$,
\item only in $\zeta$ and not in $\omega$ if $\zeta_x\geq 2$ and $\omega_x < 2$.
\end{itemize}
This dynamics is usually referred to as the \emph{basic coupling}, we will not give the explicit form of the coupled generator $\tgenzr$, since it would require burdensome notations, and the dynamics description above is fairly straightforward.

 Denote by $\mathscr{I}$ (resp.~$\tilde{\mathscr{I}}$\,) the set of invariant measures for the zero-range process $\omega (t)$ (resp.~the coupled process $(\omega (t),\zeta (t))$), and by $\mathscr{S}$ (resp.~$\tilde{\mathscr{S}}$\,) the set of translation-invariant measures on $\Gamma_M$ (resp.~on $\Gamma_{M} \times \Gamma_M$).

\begin{lemma}\label{lem:ergodic}
\begin{enumerate}
\item  If $\mu \in \mathscr{I} \cap \mathscr{S}$, then $\mu$ has the following decomposition 
\[\mu = \lambda \mu_1 + (1-\lambda) \mu_2,\]
for some $\lambda \in [0,1]$, where $\mu_1 \in \mathscr{S}$	 is concentrated on the set of frozen configurations
\[\mu_1 \{\omega: \omega_y \leq 1, \; \forall y\}=1\]
and $\mu_2$	 is concentrated on the set of ergodic configurations and satisfies
\[\mu_2 = \int_1^\infty \mu_\alpha^\star\; \beta(d\alpha)\qquad \text{for some probability measure } \beta \text{ on } [1,\infty),\]
where $\mu_\alpha^\star$ is the equilibrium measure for the zero-range process defined in \eqref{eq:mualphastar}.
\medskip

\item If $\tilde{\mu} \in \tilde{\mathscr{I}} \cup \tilde{\mathscr{S}}$,  then $\tilde{\mu}$ has the following decomposition 
\[\tilde{\mu} = \lambda \tilde{\mu}_1 + (1-\lambda) \tilde{\mu}_2,\]
for some $\lambda \in [0,1]$, where $\tilde{\mu}_1$ is concentrated on the set of configurations whose two components are both frozen, and $\tilde{\mu}_2$ satisfies
\begin{equation}\label{ordered}
\tilde{\mu}_2 \{(\omega,\zeta): \omega \leq \zeta \;\text{or }\; \zeta \leq  \omega 
\}= 1.
\end{equation}
\end{enumerate}
\end{lemma}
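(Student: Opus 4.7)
The plan for part (i) is to first argue that $\mu$ places no mass on transient configurations. The key observation is that the number of empty sites $\#\{y : \omega_y = 0\}$ is non-increasing under $\genzr$: each jump requires $\omega_y \geq 2$ at the departure site, and can only convert an empty arrival site into an occupied one. On the torus this forces a.s.~absorption into the ergodic or frozen set in finite time, so stationarity of $\mu$ yields $\mu(\text{transient}) = 0$. On $\Z$ the same monotonicity combined with translation invariance forces that, $\mu$-a.s., no site with $\omega_y \geq 2$ has an empty neighbor, again decomposing the support of $\mu$ into two invariant subsets. We then write $\mu = \lambda \mu_1 + (1-\lambda)\mu_2$ by conditioning on the frozen and the ergodic sets respectively. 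The measure $\mu_1$ is automatically stationary since $\genzr$ vanishes identically on frozen configurations, so any translation-invariant probability on $\{0,1\}^{\bL_M}$ qualifies.

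For $\mu_2$, concentrated on the ergodic set, the plan is to use the shift bijection $\Phi : \omega \mapsto \tilde\omega$ with $\tilde\omega_y := \omega_y - 1$, which maps the ergodic configurations onto $\N^{\bL_M}$. Since $\mathbbm{1}_{\omega_y \geq 2} = \mathbbm{1}_{\tilde\omega_y \geq 1}$, the conjugation of $\genzr$ by $\Phi$ is exactly the generator of the standard attractive zero-range process with non-decreasing rate function $g(k) = \mathbbm{1}_{k \geq 1}$. The pushforward $\Phi_*\mu_2$ is translation-invariant and stationary for this non-degenerate attractive ZRP, and Andjel's classical characterization (see e.g.~\cite[Chapter 2]{klscaling}) then forces it to be a mixture of product geometric measures parametrized by the mean density $\tilde\alpha \geq 0$. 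Reparametrizing via $\alpha := 1+\tilde\alpha \geq 1$ and pulling back through $\Phi^{-1}$ gives the representation $\mu_2 = \int_1^\infty \mu^\star_\alpha\,\beta(d\alpha)$.

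Part (ii) proceeds by applying the marginal argument from part (i) to each coordinate of $\tilde\mu$, so that $\tilde\mu$ is supported on pairs $(\omega,\zeta)$ whose components are each either frozen or ergodic. Define $\tilde\mu_1$ as the conditional of $\tilde\mu$ on the event that both components are frozen, and $\tilde\mu_2$ as the conditional on its complement. On the mixed event $\{\omega \text{ frozen}, \zeta \text{ ergodic}\}$ one has $\omega_y \leq 1 \leq \zeta_y$ pointwise, giving $\omega \leq \zeta$ for free; the case with the roles of $\omega$ and $\zeta$ swapped is symmetric. The only nontrivial situation is that of two ergodic components, which is reduced by the joint shift $(\omega,\zeta) \mapsto (\omega-1, \zeta-1)$ to a pair of coupled standard zero-range processes under the basic coupling. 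The classical decoupling theorem for attractive ZRP dynamics (cf.~\cite[Lemma 1.2]{rezakhanlou91}) then guarantees that any translation-invariant stationary measure of the coupled standard ZRP is concentrated on ordered configurations, so $\tilde\mu_2$ lives in $\{\omega \leq \zeta\} \cup \{\zeta \leq \omega\}$, as required.

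The hardest part is expected to be the treatment of transient configurations on $\Z$, where the finite-time absorption available on the torus is not directly at our disposal and must be substituted by the monotonicity of the empty-site density combined with translation invariance and stationarity. Once the problem is reduced to the ergodic component, the degeneracy of the facilitated rate is removed by the shift bijection, and the classical attractive theory of zero-range processes transfers to our setting without further modification, both for the ergodic decomposition in (i) and the decoupling statement in (ii).
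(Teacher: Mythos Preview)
Your overall architecture matches the paper's: show $\mu$ avoids transient configurations, split into frozen and ergodic pieces, and invoke Andjel's classification on the ergodic component; for (ii), apply this marginally and handle the both-ergodic case via the coupled attractive theory. Your shift bijection $\omega\mapsto\omega-1$ to reduce the ergodic component to a non-degenerate ZRP with rate $\mathbbm{1}_{k\ge 1}$ is a clean reformulation of what the paper does by citing Andjel directly.

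There is, however, a real gap in your handling of transient configurations on $\Z$. Your monotonicity argument (empty sites can only be destroyed, so by stationarity and translation invariance the empty set is $\mu\times\mathbf{P}$-a.s.\ frozen in time) correctly yields that no site with $\omega_y\ge 2$ has an empty \emph{neighbour}. But this is strictly weaker than the frozen/ergodic dichotomy: a configuration like $\ldots,1,0,1,1,2,1,\ldots$ satisfies the neighbour condition yet is neither frozen nor ergodic. One must propagate: if $\omega_y=0$ forever, then $\omega_{y\pm1}(t)\le 1$ for all $t$, hence $\omega_{y\pm1}$ is in fact constant (equal to $0$ or $1$), so no particle ever arrives there either, forcing $\omega_{y\pm2}(t)\le 1$, and so on. The paper does exactly this induction by citing \cite[Lemma~4.1]{funaki1999free}; you should make it explicit.

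A second, sharper issue arises in the totally asymmetric case $p=1$. There your monotonicity only forbids $\omega_{y-1}\ge 2$ when $\omega_y=0$ (particles only jump right), and the propagation runs only leftward; nothing prevents $\omega_{y+1}\ge 2$. The paper handles this by a separate generator computation: from $\mu\big(\genzr\mathbbm{1}_{\{\omega_y=0,\,\omega_{y+1}=k\}}\big)=0$ one gets $\mu(\omega_y=0,\omega_{y+1}=k+1)=\mu(\omega_y=0,\omega_{y+1}=k)$ for all $k\ge 2$, forcing each term to vanish by summability. Your pathwise-monotonicity route does not cover this case without a comparable additional argument.
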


\begin{proof}[Proof of Lemma \ref{lem:ergodic}]
To sketch the proof of the first point, we follow the strategy of \cite{funaki1999free}. The main hurdle is to show that if  $\mu \in \mathscr{I} \cap \mathscr{S}$, then  $\mu$ is concentrated on the union of frozen and ergodic configurations, \textit{i.e.}
\begin{equation}\label{support}
\mu (\omega_y = 0, \omega_{y^\prime} \geq 2 \;\; \text{for some $y,\;y^\prime$}) = 0.
\end{equation}
To show the latter, since $\mu$ is invariant for the evolution of the process $\omega (t)$, first note that \lj{for any $y \in \bb{L}_M$,}
\[\mu \big(\genzr \mathbbm{1}_{\{\omega_y = 0\}}\big)   = 0.\]
Direct calculations show that if $\frac12 \leq p <1$, then
\lj{\[\mu (\omega_y=0,\omega_{y + 1} \geq 2) = \mu (\omega_y=0,\omega_{y - 1} \geq 2) = 0.\]}
If $p = 1$, we only have
\[\mu (\omega_y=0,\omega_{y - 1} \geq 2) = 0.\]
However, by further considering
\[\mu \big(\genzr \mathbbm{1}_{\{\omega_y = 0, \omega_{y+1} = k\}}\big)   = 0,\quad k \geq 1,\]
we get
\[\mu (\omega_y=0,\omega_{y + 1} \geq 2) = 0.\]
Using the same induction argument as in the proof of \cite[Lemma 4.1]{funaki1999free}, it is not hard to prove  \eqref{support}.  Now we decompose $\mu$ by respectively restricting it to the set of  frozen configurations and to that of ergodic configurations. \lj{It is well known from \cite[Theorem 1.9]{Andjel82} that}  the invariant and translation invariant measures of the zero-range process $\omega (t)$ restricted to the set of ergodic configurations are a linear combination of the measures $\{\mu_\alpha^\star,\,\alpha \geq  1\}$. This proves 1.

We now consider point 2.  By \eqref{support}, \lj{the first and the second marginals of $\tilde{\mu}$ are concentrated either on $\{0,1\}^{\bb{L}_M}$ or on $\{1,2,\ldots\}^{\bb{L}_M}$. Therefore,} $\tilde{\mu}$ is concentrated on the set of configurations
\begin{multline*}
\big\{(\omega,\zeta): \omega_y \leq 1\;\text{and}\; \zeta_y \leq 1\;\text{for all}\;y\big\}  \cup \big\{(\omega,\zeta): \omega_y \geq 1\;\text{and}\; \zeta_y \geq 1\;\text{for all}\;y\big\}\\
\cup \big\{(\omega,\zeta): \omega_y \leq 1\;\text{and}\; \zeta_y \geq 1\;\text{for all}\;y\;\text{or}\; \omega_y \geq 1\;\text{and}\; \zeta_y \leq 1\;\text{for all}\;y\big\}.
\end{multline*}
Decompose $\tilde{\mu}$ by restricting it to the above three sets respectively. Obviously,  $\tilde{\mu}$ restricted to  the first set is concentrated on the set of configurations whose two components are both frozen, and $\tilde{\mu}$ restricted to  the last set  satisfies \eqref{ordered}. By \cite[Proposition 5.1]{Andjel82}, \lj{any invariant and translation invariant measure of the classic coupled zero range process is concentrated on the sets of configuration pairs which are ordered. Therefore,} $\tilde{\mu}$ restricted to  the second set above satisfies \eqref{ordered}.  This completes the proof of Lemma \ref{lem:ergodic}.
\end{proof}

Because we used the hydrodynamic limit of the zero-range process through a mapping to prove Theorem \ref{thm:ex}, and because the image through the mapping of Bernoulli product measures fitting $\rho^{\rm ini}$ is \emph{not} a geometric product measure, (although it \emph{does} satisfy \eqref{eq:initmeasure}, see Lemma \ref{lem:InitialLE}) we need to prove Theorem \ref{thm:zr} for fairly general initial distributions. However, because of the degeneracy of the process, entropy tools cannot be used to prove Theorem \ref{thm:zr}, \ccl{be it} in the symmetric or asymmetric case. 
\lj{Because of this, assuming that the initial distribution is product is fundamental in order to apply the methods of \cite{rezakhanlou91}.}
Indeed, it is used to dominate by an equilibrium state and prove the two blocks-estimate in the symmetric case, and to prove the initial condition \eqref{eq:initZR} in the asymmetric case. We therefore first use attractiveness to prove the following result, which states that if two initial distributions fit the same initial profile $\alpha^{\rm ini}$, they have the same hydrodynamic limit.

\begin{lemma}
\label{lem:initprofileZR}
Consider two zero-range processes $\omega$ and $\zeta$ on $\mathbb{L}_M$ with respective initial distributions $\mu_M$, $\mu'_M$, we denote by $\widetilde{\mu}_M=\mu_M\otimes \mu'_M$ the product measure for the pair $(\omega, \zeta)$. Assume that for any compactly supported test function $\varphi\in C^2_c(\mathbb L)$, any $\varepsilon>0$
\begin{equation}
\label{eq:initprofileZRvanish}
\lim_{M\to\infty} \widetilde{\mu}_M \bigg(\bigg|\frac{1}{M}\sum_{y\in \bL_M}(\omega_y- \zeta_y)\varphi (\tfrac yM)\bigg|>\varepsilon\bigg)=0.
\end{equation}
Then, in both symmetric (I) and asymmetric (II) cases, if  \lj{\eqref{lln} holds for $\mu_M$, then it also holds for $\mu'_M$.}
\end{lemma}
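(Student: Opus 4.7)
The plan is to evolve the pair $(\omega(t), \zeta(t))$ under the basic coupling with generator $M^\kappa \tgenzr$, starting from the product distribution $\widetilde\mu_M$, and to exploit attractiveness together with the characterization of coupled invariant measures (Lemma~\ref{lem:ergodic}(ii)) in order to identify the macroscopic profile of $\zeta$ with that of $\omega$.

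First, I would establish tightness of the joint empirical measure-valued sequence
\[
(\pi^M_t, \tilde\pi^M_t) := \Big(\tfrac{1}{M}\sum_{y\in\bL_M} \omega_y(t)\delta_{y/M},\, \tfrac{1}{M}\sum_{y\in\bL_M} \zeta_y(t)\delta_{y/M}\Big)
\]
in the Skorokhod space $D([0,T], \mathcal{M}_+(\bL)^2)$. This relies on standard moment estimates together with attractiveness, which allows each marginal to be dominated by an equilibrium geometric measure $\mu^\star_{\alpha_\star+1}$ as in Lemma~\ref{lem:ZRdensity}. Along a convergent subsequence, let $(\alpha_\cdot dv, \tilde\alpha_\cdot dv)$ denote the limit point. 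By the hydrodynamic limit assumed for $\omega$, $\alpha_\cdot$ is the unique weak (resp.~entropy) solution of the appropriate equation with initial datum $\alpha^{\rm ini}$; hypothesis \eqref{eq:initprofileZRvanish} forces $\tilde\alpha_0 = \alpha^{\rm ini}$ as well.

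It then remains to identify $\tilde\alpha_t = \alpha_t$ for every $t>0$. I would combine two ingredients. On the microscopic side, the $L^1$-contraction of the basic coupling: a direct case analysis on each nearest-neighbor jump, using the monotonicity of the rate $g(k)=\mathbbm{1}\{k\geq 2\}$, shows that the total discrepancy $\sum_{y}|\omega_y(t) - \zeta_y(t)|$ is non-increasing in $t$. On the macroscopic side, the invariant measure characterization of Lemma~\ref{lem:ergodic}(ii), which concentrates invariant translation-invariant measures of the coupled process on ordered pairs or on frozen-frozen pairs. Together with a replacement (one-block) estimate for the coupled process---which relies only on attractiveness and stochastic domination by a product equilibrium measure, hence applies to $\zeta$ without circularly invoking the hydrodynamic limit we want to prove---these imply that for almost every $(t,v)$, the local distribution of $(\omega_y(t), \zeta_y(t))$ around the macroscopic point $v$ concentrates on a coupled invariant measure. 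Consequently $\alpha_t(v)$ and $\tilde\alpha_t(v)$ must be pointwise comparable a.e., and combining this comparability with PDE uniqueness (using that both have the same initial profile $\alpha^{\rm ini}$) forces $\tilde\alpha_t = \alpha_t$.

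The hard part will be this last identification step. The weak initial hypothesis \eqref{eq:initprofileZRvanish} does \emph{not} imply that the microscopic $L^1$ distance $M^{-1}\sum_y|\omega_y(0)-\zeta_y(0)|$ vanishes as $M\to\infty$: under the product coupling $\widetilde\mu_M$ the two marginals are independent, so this quantity is generically of order $1$. Hence $L^1$-contraction alone cannot directly propagate the weak initial hypothesis to positive times; its combination with Lemma~\ref{lem:ergodic}(ii), and the attractive replacement lemma for the coupled process, is what closes the argument. Extra care will be needed in the asymmetric case, where only entropy solutions are available and the Kruzhkov-type uniqueness framework is less forgiving.
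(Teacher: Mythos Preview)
Your plan has a genuine gap in the identification step. You correctly observe that the microscopic $L^1$ distance $M^{-1}\sum_y|\omega_y(0)-\zeta_y(0)|$ is of order $1$ under the product coupling, so $L^1$-contraction alone cannot propagate hypothesis \eqref{eq:initprofileZRvanish}. But the fix you propose does not close. Knowing that the space--time averaged coupled distribution is a mixture of ordered measures (Lemma~\ref{lem:ergodic}(ii)) only tells you that for a.e.~$(t,v)$ one has $\alpha_t(v)\le\tilde\alpha_t(v)$ \emph{or} $\tilde\alpha_t(v)\le\alpha_t(v)$, with the direction allowed to depend on $(t,v)$; this ``pointwise comparability'' is vacuous for real-valued functions. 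And your appeal to PDE uniqueness is circular: uniqueness applies to pairs of \emph{solutions}, whereas the whole point is that you do not yet know $\tilde\alpha$ solves anything. A one-block estimate for the coupled process does not supply this either --- it identifies local statistics with invariant measures, not $\tilde\alpha$ with a weak/entropy solution.

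The paper sidesteps all of this by tracking a different monotone quantity under the basic coupling: not $\sum_y|\omega_y-\zeta_y|$, but the maximum over all intervals of the \emph{signed} partial sum,
\[
\max_{y,y'}\Big|\sum_{z=y}^{y'}\big(\omega_z(t)-\zeta_z(t)\big)\Big|.
\]
A direct case check on nearest-neighbor jumps shows this is non-increasing in $t$. Crucially, it is exactly the quantity that the weak hypothesis \eqref{eq:initprofileZRvanish} forces to be $o(M)$ at time $0$: approximate indicators of intervals by smooth test functions and use compactness of the unit ball of $C(\T)$ together with a total-mass bound. Hence it stays $o(M)$ for all $t$, and integrating any smooth $\varphi$ against $\omega(t)-\zeta(t)$ reduces, via a piecewise-constant approximation of $\varphi$, to a finite combination of such partial sums. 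In the asymmetric case on $\Z$ the same idea works after localizing with an auxiliary third process that agrees with $\zeta$ outside a large box, combined with finite speed of propagation. No PDE argument and no replacement lemma are needed.
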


\begin{proof}[Proof of Lemma \ref{lem:initprofileZR}]
To prove this result, we adapt ideas from  \cite{bahadoran2004blockage}. We start with the symmetric case, which takes place on the finite ring $\bb L_M=\T_M$ and will allow us to illustrate the main argument. We couple the two processes $\{\omega (t),\zeta (t)\}_{t \geq 0}$ by the basic coupling described in the beginning of the section, and we assume that $\omega$ admits a hydrodynamic limit, so that in particular we must have, for any 
\begin{equation}
\label{eq:defC0}
C_0>2\sup_v \alpha^{\rm ini}(v)
\end{equation} 
and using $\varphi\equiv 1$ in \eqref{eq:initmeasure} and \eqref{eq:initprofileZRvanish}, that
\begin{equation}
\label{eq:initprofileZRvanish2}
\lim_{M\to\infty} \widetilde{\mu}_M \bigg(\frac{1}{M}\sum_{y\in \T_M}(\omega_y+\zeta_y )>C_0\bigg)=0.
\end{equation} 
We claim that, denoting by $\Delta_{y,y'}(t)$ the particle difference between the two processes in the segment $[y,y']$ at time $t$,
\begin{equation}
\label{eq:Deltat0}
\max_{y, y'\in \T_M}\Delta_{y,y'}(t):=\max_{y, y'\in \T_M} \bigg| \sum_{z=y}^{y^\prime} \big(\omega_z (t) - \zeta_z (t)\big) \bigg| \leq 
\max_{y, y'\in \T_M}\Delta_{y,y'}(0).
\end{equation}
This \lj{inequality} is clear, because the left-hand side is decreasing in time under the basic coupling: consider a segment $[y, y']$ which realizes the maximum of $\Delta_{y,y'}(t)$, and assume for example that in this segment $\omega(t)$ contains more particles than $\zeta(t)$. Since jumps occur a.s.~one at a time, the only way for the maximum to increase in time is for an $ \omega$--particle to jump \lj{into} $[y, y']$ without a $\zeta$--particle \lj{or for a $\zeta$--particle to jump out of $[y, y']$ without an $\omega$--particle. Regarding the former case,} since particle jumps are nearest-neighbor, the only way this can happen is if $\Delta_{y,y'}$ is less than either $\Delta_{y-1,y'}$ or $\Delta_{y,y'+1}$, \lj{which is impossible. The latter case is similar and we leave it to the readers.}

Now, thanks to \eqref{eq:initprofileZRvanish}, we claim that $
\max_{y, y'\in \T_M}\Delta_{y,y'}(0)/M$ vanishes in probability \lj{as $M \rightarrow \infty$}. \lj{To prove it, fix $\varepsilon>0$ and  define $\varepsilon'=\varepsilon/4 C_0$, (where $C_0$ was given in \eqref{eq:defC0}).	Let $A_{\varepsilon'} = \{k\varepsilon': k =0,1,\ldots,(\varepsilon')^{-1}\}$.  Since
\begin{align*}
\max_{y, y'\in\T_M} \frac1M\Delta_{y,y'}(0)&=\max_{ v, v'\in\T} \frac1M \Delta_{vM,v^\prime M}(0)\\
&\leq \max_{ v, v'\in A_{\varepsilon'} } \frac1M \Delta_{vM,v^\prime M}(0)  + 2 \max_{k=0,\dots,(\varepsilon')^{-1}} \frac{1}{M} \sum_{y=kM \varepsilon^\prime}^{(k+1)M \varepsilon^\prime} [\omega_y+\zeta_y],
\end{align*}
we have
\begin{align*}
&\widetilde{\mu}_M  \bigg( \max_{y, y'\in\T_M} \frac1M\Delta_{y,y'}(0)> \varepsilon \bigg) \notag \\
&\leq \widetilde{\mu}_M  \bigg( \max_{ v, v'\in A_{\varepsilon'} } \frac1M \Delta_{vM,v^\prime M}(0) > \frac \varepsilon 2\bigg) + \widetilde{\mu}_M  \bigg( \max_{k=0,\dots,(\varepsilon')^{-1}} \frac{1}{M} \sum_{y=kM \varepsilon^\prime}^{(k+1)M \varepsilon^\prime} [\omega_y+\zeta_y] > \frac \varepsilon 4\bigg)\notag \\
&\leq \sum_{ v, v'\in A_{\varepsilon'} } \widetilde{\mu}_M  \bigg( \frac1M \Delta_{vM,v^\prime M}(0)> \frac \varepsilon 2\bigg) +\sum_{k=0}^{(\varepsilon')^{-1}}  \widetilde{\mu}_M  \bigg(  \frac{1}{M\varepsilon^\prime} \sum_{y=kM \varepsilon^\prime}^{(k+1)M \varepsilon^\prime} [\omega_y+\zeta_y] > C_0\bigg).
\end{align*}
By \eqref{eq:initprofileZRvanish}, approximating ${\mathbbm{1}}_{[v, v']}$ by smooth test functions, the first term in the right hand side vanishes as $M \rightarrow \infty$. The second term also vanishes by a straightforward adaptation of \eqref{eq:initprofileZRvanish2}.}
This, together with  \eqref{eq:Deltat0},  yields that $\max_{y, y'\in \T_M}\Delta_{y,y'}(t)/M$ also vanishes in probability for any $t>0$. Fix a smooth test function $\varphi\in C^2(\T)$, which we approximate, for any $n\in \N$, by $\varphi_n:=\sum_{k=1}^n\varphi(k/n)\mathbbm{1}_{((k-1)/n, k/n]}$ which is in  supremum norm $ \mathcal{O}(1/n)$ close to $\varphi$. We can now write 
\begin{multline}
\label{finalinitial}
\bigg|\frac{1}{M}\sum_{y\in \T_M}\zeta_y(t)\varphi (\tfrac yM)-\frac{1}{M}\sum_{y\in \T_M}\omega_y(t)\varphi (\tfrac yM)\bigg|\\
\leq \bigg|\frac{1}{M}\sum_{y\in \T_M}(\zeta_y-\omega_y)(t)\varphi_n (\tfrac yM)\bigg|+\frac{1}{M}\sum_{y\in \T_M}(\zeta_y+\omega_y)(t)\big|\varphi-\varphi_n\big| (\tfrac yM).
\end{multline}
For any $n$, the first term in the right hand side vanishes in probability  as $M\to\infty$ because $\varphi_n$ is bounded and piecewise constant, and $ \max_{y, y'\in \T_M}\Delta_{y,y'}(t)/M$ vanishes in probability. For any fixed $\varepsilon>0$, and for any $n$ large enough ($n>C_0\norm{\varphi'}_\infty/\varepsilon$), we can write according to \eqref{eq:initprofileZRvanish2}, and by conservation of the number of particles that 
\[{\bf P}_{\widetilde{\mu}_M}\bigg(\bigg|\frac{1}{M}\sum_{y\in \T_M}(\zeta_y+\omega_y)(t)\big|\varphi-\varphi_n\big| (\tfrac yM)\bigg)>\varepsilon\bigg)\]
vanishes  as $M\to\infty$, where ${\bf P}_{\widetilde{\mu}_M}$ denotes the distribution of the coupled zero-range process started from $\widetilde{\mu}_M$. In particular, since the left-hand side of \eqref{finalinitial} does not depend on $n$, it vanishes in probability as $M\to\infty$. This proves that if $\omega$ admits a hydrodynamic limit, $\zeta$ also does.

\medskip

We now turn to the asymmetric case on the full line. We claim that analogously to the symmetric case, regardless of $\mu_M$, $\mu'_M$,  for any $v < v^\prime$,
\begin{equation}
\label{eq:vM}
\sup_{vM < y < y^\prime < v^\prime M} \bigg| \sum_{z=y}^{y^\prime} \big(\omega_z (t) - \zeta_z (t)\big) \bigg| \leq \sup_{(v-2t)M < y < y^\prime < (v^\prime +2t) M} \bigg| \sum_{z=y}^{y^\prime} \big(\omega_z (0) - \zeta_z (0)\big) \bigg| + \varepsilon_M,
\end{equation}
where the remainder term $\varepsilon_M$ vanishes in probability as $M \rightarrow \infty$. To prove this claim,  let  $\{\xi(t)\}$ be a third process, coupled with the other two, but with initial configuration given  by
\[\xi_y(0)=\omega_y(0)\mathbbm{1}_{y\in[(v-2t)M,(v^\prime+2t)M]}+\zeta_y(0)\mathbbm{1}_{y\notin[(v-2t)M,(v^\prime+2t)M]}.\]
Then, the left hand side of \eqref{eq:vM} is less than 
\[
\sup_{vM < y < y^\prime < v^\prime M} \bigg| \sum_{z=y}^{y^\prime} \big(\omega_z (t) - \xi_z (t)\big) \bigg|+ \sup_{vM < y < y^\prime < v^\prime M} \bigg| \sum_{z=y}^{y^\prime} \big(\xi_z (t) - \zeta_z (t)\big) \bigg|.
\]
For the first term on the right-hand side of the above, since the displacement of a particle  is stochastically bounded by a Poisson process with parameter one, with very high probability,  $\omega (t) = \xi(t)$ in the interval $[(v-t)M,(v^\prime+t) M]$, \emph{cf.}~\cite{rezakhanlou91} for example.  In particular the first term vanishes in probability as $M \rightarrow \infty$. For the second term, note as in the symmetric case that thanks to the coupling, the quantity
\[\sup_{y < y^\prime} \bigg| \sum_{z=y}^{y^\prime} \big(\xi_z (t) - \zeta_z (t)\big) \bigg|\]
is non-increasing in time, and is a.s.~finite because $\xi$ and $\zeta$ only differ initially in the finite segment $[(v-2t)M,(v^\prime+2t)M]$.  This permits us to bound the second term by
\[\sup_{y < y^\prime} \bigg| \sum_{z=y}^{y^\prime} \big(\xi_z (0) - \zeta_z (0)\big) \bigg| =  \sup_{(v-2t)M < y < y^\prime < (v^\prime +2t) M} \bigg| \sum_{z=y}^{y^\prime} \big(\omega_z (0) - \zeta_z (0)\big) \bigg| .\]
This proves \eqref{eq:vM}. Repeating then the same arguments as in the symmetric case, it is straightforward to show thanks to \eqref{eq:initprofileZRvanish} that the right-hand side vanishes in probability as $M\to\infty$ for any fixed $v$, $v'$ and $t$. It then follows that for any compactly supported test function $\varphi$, the discrete integrals $\frac{1}{M}\sum_{y\in \Z}\omega_y(t)\varphi (\tfrac yM)$ and $\frac{1}{M}\sum_{y\in \Z}\zeta_y(t)\varphi (\tfrac yM)$ converge in probability to the same limit, which concludes the proof of Lemma \ref{lem:initprofileZR}.
\end{proof}

Thanks to the previous lemma, and throughout the rest of the section, we can now, without loss of generality, assume that the zero-range process is started from a product initial distribution. More precisely, we now assume that the initial distriution for the process is given by \eqref{eq:muM}, namely its marginals satisfy
\[
\mu_M(\omega_y=k)=\mathbbm{1}_{\{k\in \N\}}\frac{1}{1+\alpha^{\mathrm{ini}}(\frac{y}{M})}\pa{1-\frac{1}{1+\alpha^{\mathrm{ini}}(\frac{y}{M})}}^k, \quad \text{ for any } y \in \bL_M.
\]
\lj{Recall $\N = \{0,1,2,\ldots\}$.} Denote by  $\mu_\alpha$ the corresponding (fixed parameter) geometric product measure  with density $\alpha$, taking values in $\N$ at each site,
\begin{equation}
\label{eq:nualpha}
\mu_\alpha(\omega_y=k)={\bf 1}_{\{k\in \N\}}\frac{1}{1+\alpha}\pa{1-\frac{1}{1+\alpha}}^k, \quad \text{ for any } y \in \bL_M.
\end{equation}
As previously noted in Section  \ref{sec:ZR}, $\mu_\alpha$ is \emph{not} an equilibrium distribution for the zero-range process, since its equilibrium distributions are  given by geometric product measures $\mu_\alpha^\star$ taking values at each site in $\N^*:=\N\setminus\{0\}$, and only defined for densities $\alpha\geq1$,
\begin{equation}
\label{eq:nualphastar2}
\mu^\star_\alpha(\omega_0=k)=\mu_{\alpha-1}(\omega_y=k-1)={\bf 1}_{\{k\in \N^*\}}\frac{1}{\alpha}\pa{1-\frac{1}{\alpha}}^{k-1}, \quad \text{ for any } y \in \bL_M.
\end{equation}
For this reason, the process is not initially
in a state of local equilibrium, and cannot be locally coupled at time $0$ with equilibrium distributions. Further note that even with Lemma \ref{lem:initprofileZR}, we cannot start the process from a local equilibrium state, since the latter is only defined (see \eqref{eq:nualphastar2}) for densities $\alpha\geq 1$, and our initial profile can \textit{a priori} take any non-negative values.

However,  both distributions $\mu_\alpha$ and $\mu^\star_\alpha$ are parametrized by the particle density, $\E_{\mu^\star_\alpha}[\omega_y(0)]=\E_{\mu_\alpha}[\omega_y (0)]=\alpha$.
Note that because it is not the equilibrium distribution for the facilitated zero-range process, the measure $\mu_\alpha$ given by \eqref{eq:nualpha} is not a particularly natural choice of initial product measure. However, taking a geometric distribution is a natural choice through the mapping, because it translates in the exclusion process as a product Bernoulli measure. This is the reason behind the initial distributions \eqref{eq:muM} and \eqref{eq:nualpha}.

\medskip

We now give a technical lemma, which allows us to bound from above the zero-range process by an equilibrium state with slightly larger density. Recall that the initial profile $\alpha^{\rm ini}$ for the zero-range process is assumed to be bounded,  we denote by 
\begin{equation}
\overline{\alpha}=\sup_{v\in\bL}\alpha^{\rm ini}(v)+1.
\end{equation}

\begin{lemma}
\label{lem:upperbound}
There exists a coupling $\overline{\mu}_M$ between the initial configuration $\omega$ whose  distribution   is 
\[\mu_M(\omega=\cdot)=\bigotimes_{y\in \bL_M}\mu_{\alpha^{\rm ini}(y/M)}(\omega_y=\cdot),\] 
given by \eqref{eq:muM} and the equilibrium configuration $\zeta$ with  distribution $\mu^\star_{\overline{\alpha}}$, such that 
\begin{equation}
\label{eq:upperbound0}
\overline{\mu}_M(\omega\leq \zeta)=1.
\end{equation}
In particular, the zero-range process started from $\mu_M$ is at all times stochastically dominated by $\mu^\star_{\overline{\alpha}}$, in the sense that 
\begin{equation}
\label{eq:upperbound}
{\bf P}_{\overline{\mu}_M}(\omega(t)\leq \zeta(t))=1,
\end{equation}
where ${\bf P}_{\overline{\mu}_M}$ denotes the coupled distribution of the processes $\{\omega(t), \zeta(t)\}$ started from $\overline{\mu}_M$ and driven by the \lj{basic coupling}.
\end{lemma}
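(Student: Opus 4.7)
The plan is to establish the initial coupling \eqref{eq:upperbound0} site by site, and then to propagate the order to all times $t>0$ via the attractiveness of the basic coupling described at the beginning of Section \ref{sec:coupling}.

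First, since both $\mu_M=\bigotimes_y \mu_{\alpha^{\rm ini}(y/M)}$ and $\mu^\star_{\overline{\alpha}}=\bigotimes_y \mu^\star_{\overline{\alpha}}$ are product measures over $y\in\bL_M$, it suffices to exhibit for each site a one-dimensional coupling of the marginals under which $\omega_y\leq \zeta_y$ almost surely, and then take the product coupling $\overline{\mu}_M:=\bigotimes_y \overline{\mu}_{M,y}$. I would therefore reduce \eqref{eq:upperbound0} to the stochastic domination of the marginal laws.

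Second, I would verify this stochastic domination by comparing the tails. Writing $\alpha:=\alpha^{\rm ini}(y/M)$, and recalling that $\alpha\leq \overline{\alpha}-1$ by definition of $\overline{\alpha}$, a direct computation from \eqref{eq:muM} and \eqref{eq:nualphastar2} yields, for every integer $k\geq 1$,
\[
\mu_{\alpha}(\omega_y\geq k)=\Big(\tfrac{\alpha}{1+\alpha}\Big)^{k}, \qquad \mu^\star_{\overline{\alpha}}(\zeta_y\geq k)=\Big(\tfrac{\overline{\alpha}-1}{\overline{\alpha}}\Big)^{k-1},
\]
while for $k=0$ both probabilities equal $1$. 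Since $x\mapsto x/(1+x)$ is increasing on $\R_+$ and $\alpha\leq \overline{\alpha}-1$, we have $\alpha/(1+\alpha)\leq (\overline{\alpha}-1)/\overline{\alpha}<1$, and therefore
\[
\Big(\tfrac{\alpha}{1+\alpha}\Big)^{k}\;\leq \;\Big(\tfrac{\overline{\alpha}-1}{\overline{\alpha}}\Big)^{k}\;\leq\; \Big(\tfrac{\overline{\alpha}-1}{\overline{\alpha}}\Big)^{k-1}.
\]
This establishes $\mu_\alpha(\omega_y\geq k)\leq \mu^\star_{\overline{\alpha}}(\zeta_y\geq k)$ for every $k\geq 0$, i.e.~the stochastic domination of the marginals. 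The one-site coupling $\overline{\mu}_{M,y}$ is then obtained in the standard way from a single uniform random variable $U_y$ on $[0,1]$ and the generalized inverse cumulative distribution functions of $\mu_\alpha$ and $\mu^\star_{\overline{\alpha}}$, so that $\overline{\mu}_{M,y}(\omega_y\leq \zeta_y)=1$. Taking the product over $y$ yields \eqref{eq:upperbound0}.

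Third, to deduce \eqref{eq:upperbound} for all $t>0$, I would invoke attractiveness: under the basic coupling, the partial order $\omega(t)\leq \zeta(t)$ is preserved in time. Indeed, a discrepancy can only be created when, at some site $y$, a $\zeta$-particle jumps while a coupled $\omega$-particle does not, which requires $\zeta_y\geq 2$ and $\omega_y\leq 1$; but then the decrement of $\zeta_y$ by one preserves $\omega_y\leq \zeta_y-1$, and the increment of $\zeta_{y\pm 1}$ by one can only reinforce the order. The case where $\omega$ jumps without $\zeta$ cannot occur under the assumption $\omega\leq \zeta$, since it would require $\omega_y\geq 2>\zeta_y$. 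Combining this with \eqref{eq:upperbound0} gives \eqref{eq:upperbound}. There is no real obstacle here: the argument is the standard stochastic comparison of geometric laws together with attractiveness, the only minor subtlety being the shift of domain between $\N$ and $\N^\star$, which is precisely absorbed by the choice $\overline{\alpha}=\sup_v\alpha^{\rm ini}(v)+1$.
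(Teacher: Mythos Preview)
Your proof is correct and follows essentially the same approach as the paper: both establish the initial stochastic domination by comparing the tail probabilities of the one-site marginals (yielding the same inequality $(\alpha/(1+\alpha))^k\leq ((\overline{\alpha}-1)/\overline{\alpha})^{k-1}$), realize the coupling via the quantile transform with a common uniform variable at each site, and then invoke attractiveness of the basic coupling to propagate the order in time. Your write-up is in fact slightly more detailed than the paper's, in particular in spelling out why no discrepancy can be created under the basic coupling.
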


\begin{proof}[Proof of Lemma \ref{lem:upperbound}]
Thanks to the coupling, if \eqref{eq:upperbound} holds at the initial time, it holds for any time $t>0$. We therefore only need to build the initial coupling $\overline{\mu}_M.$ Denote by $\overline{\mu}_M$ the distribution of $M$ i.i.d.~variables $(U_y)_{y\in \bL_M}$, \lj{which are uniform on $[0,1]$,} and define for any $k\in \N$ 
\begin{align*}\omega_y=k\quad &\mbox{ iff } \quad  \mu_M(\omega_y< k) \leqslant U_y < \mu_M(\omega_y< k+1),\\ \zeta_y=k\; \quad &\mbox{ iff } \quad \mu_{\overline{\alpha}+1}(\omega_y< k) \leqslant U_y < \mu_{\overline{\alpha}+1}(\omega_y< k+1).\end{align*}
Since $\alpha^{\rm ini}+1\leq \overline{\alpha}$, we have for any $k$ and any $y\in \bL_M$
\[
\pa{\frac{\alpha^{\rm ini}(y/M)}{1+\alpha^{\rm ini}(y/M)}}^k=\mu_M(\omega_y \geq 
k)\leq \mu_{\overline{\alpha}}(\omega_y\geq k)=\pa{\frac{\overline{\alpha}-1}{\overline{\alpha}}}^{k-1},
\]
so that \eqref{eq:upperbound0} holds.
\end{proof}

\subsection{Symmetric case} 
We now prove Theorem \ref{thm:zr}. Even though in the symmetric case, one can prove that the transience time to reach a frozen/ergodic state is subdiffusive (see \emph{e.g}.~\cite[Theorem 2.6]{blondel2021stefan}), the two-phased nature of our zero-range process, unfortunately, precludes the use of any entropy tool. Indeed, both the entropy method \cite{guo1988nonlinear} and the relative entropy method \cite{yau1991relative} require the existence of a one-parameter family of stationary states, which for our model only exists in the supercritical case $\alpha \geq 1$ (see \eqref{eq:nualphastar2}). In the subcritical regions, any $\delta$--Dirac measure on a  frozen state is by definition stationary, and entropy arguments no longer apply.

However, thanks to the attractiveness of the process, we are able to provide a simple proof based
\begin{itemize}
\item on Funaki's one-block estimate (see \cite[Theorem 4.1]{funaki1999free}, \cite[Proposition 3.8]{blondel2021stefan}) in the context of Stefan problems, based on the decomposition of translation invariant stationary states for the infinite volume dynamics. This is the content of Lemma \ref{lem:OBE} below;
\item on Rezakhanlou's two-blocks estimate (see \cite[Lemma 6.2]{rezakhanlou91}), which he proved for attractive asymmetric particle systems, adapted in Lemma \ref{lem:TBE} below.
\end{itemize}

Unfortunately, Rezakhanlou's argument does not work \textit{verbatim} in our case, because as mentioned in the previous subsection, the process is not started from a state of local equilibrium, and cannot be locally coupled with equilibrium distributions.

\medskip

A strategy to overcome this issue is the following: instead of coupling with equilibrium distributions, we couple with pseudo-equilibrium distributions, that is with distributions ${\bf P}_{\mu_\alpha}$ of the zero-range process started from a constant density $\alpha>1$. According to \cite{blondel2020hydrodynamic}, for this process with $\alpha>1$, the hydrodynamic limit holds, and in particular one can show the one and two-blocks estimates by the classical entropy estimates, starting from the time where the ergodic component is reached.

\subsubsection{A one-block estimate}
We start by a local law of large numbers for the zero-range process, given by the following lemma.
\begin{lemma}
\label{lem:OBE}
Define $B_\ell(y)=\{y-\ell, \dots,y+\ell\}$, and \[\omega^\ell_y(t)=\frac{1}{2\ell+1}\sum_{y'\in B_\ell(y)} \omega_{y'}(t),\] then we have
\begin{equation}
\label{eq:OBE}
\limsup_{\ell\to\infty}\limsup_{M\to\infty}\mathbf{E}_{\mu_M}\bigg[\int_0^T\frac{1}{M}\sum_{y\in \T_M}\bigg| \frac{1}{2\ell+1}\sum_{y'\in B_\ell(y)} g(\omega_{y'}(t))-\mathcal{G}(\omega^\ell_y(t))\bigg|dt\bigg]=0.
\end{equation}
\end{lemma}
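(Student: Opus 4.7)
The strategy is the classical one-block approach for Stefan problems introduced by Funaki \cite{funaki1999free}, adapted to our attractive but degenerate setting. I would work with the random empirical measure on configurations
\[\mathscr{R}^M := \frac{1}{TM}\int_0^T\sum_{y\in\T_M}\delta_{\tau_y\omega(t)}\,dt,\]
viewed as a (random) probability measure on $\Gamma_\infty := \N^\Z$; the quantity in \eqref{eq:OBE} equals $T\,\mathbf{E}_{\mu_M}\big[\int |V_\ell| \, d\mathscr{R}^M\big]$, where the local function
\[V_\ell(\omega) := \frac{1}{2\ell+1}\sum_{|y|\leq \ell} g(\omega_y) - \mathcal{G}(\omega^\ell_0)\]
depends only on $\omega_{-\ell},\dots,\omega_\ell$. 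The plan is to show that any weak limit point of $\mathbf{E}_{\mu_M}[\mathscr{R}^M]$ is a translation-invariant \emph{stationary} measure for $\genzr$, apply the decomposition of Lemma \ref{lem:ergodic}\,(i), and conclude \emph{via} the ergodic theorem that $V_\ell \to 0$ in each ergodic component.

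\textbf{Tightness and stationarity.} First, thanks to Lemmas \ref{lem:upperbound} and \ref{lem:ZRdensity}, I would restrict to the event where $\sup_{y,t\leq TM^2}\omega_y(t) \leq \log^2 M$ and where the mean density at each site remains bounded by $\overline\alpha$. This provides tightness of $\{\mathbf{E}_{\mu_M}[\mathscr{R}^M]\}$ on $\Gamma_\infty$. The stationarity of any weak limit $\mathscr{R}^\infty$ is obtained by a direct martingale argument avoiding entropy tools: for any bounded cylinder function $f$, the spatial sum $F:=\sum_{y\in\T_M}\tau_y f$ satisfies $|F|=O(M)$, and Dynkin's formula applied to the accelerated generator $M^2\genzr$, combined with the fact that each jump alters $F$ by $O(1)$, yields a martingale with quadratic variation of order $M^3 T$. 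This gives $\int_0^T \genzr F(\omega(t))\,dt = O_{L^2}(\sqrt{T/M})$; dividing by $MT$ then produces $\mathbf{E}_{\mu_M}\big|\!\int\!\genzr f\,d\mathscr{R}^M\big|\to 0$, so $\mathscr{R}^\infty \in \mathscr{I}\cap\mathscr{S}$.

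\textbf{Decomposition and Birkhoff.} By Lemma \ref{lem:ergodic}\,(i), $\mathscr{R}^\infty = \lambda\mu_{\mathrm{fr}} + (1-\lambda)\int_1^\infty \mu_\alpha^\star\,\beta(d\alpha)$, which I further decompose into extremal translation-invariant ergodic components. On any ergodic component of the frozen part $\mu_{\mathrm{fr}}$, one has $\omega_y\leq 1$ everywhere, hence $g(\omega_y)\equiv 0$ and $\omega^\ell_0\leq 1$, so both terms defining $V_\ell$ vanish identically. On the ergodic component $\mu_\alpha^\star$ with $\alpha>1$, Birkhoff's ergodic theorem yields $\omega^\ell_0\to\alpha$ and $\frac{1}{2\ell+1}\sum_{|y|\leq\ell} g(\omega_y) \to E_{\mu_\alpha^\star}[g] = (\alpha-1)/\alpha = \mathcal{G}(\alpha)$ almost surely as $\ell\to\infty$, so $V_\ell \to 0$ a.s. Since $|V_\ell|\leq 2$, the dominated convergence theorem together with the ergodic decomposition yields $\mathbf{E}_{\mathscr{R}^\infty}[|V_\ell|] \to 0$. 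Combined with the subsequential convergence of $\mathbf{E}_{\mu_M}\big[\!\int |V_\ell|\,d\mathscr{R}^M\big]$ to $\mathbf{E}_{\mathscr{R}^\infty}[|V_\ell|]$ (justified by the truncation), this concludes the proof after letting first $M\to\infty$ and then $\ell\to\infty$.

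\textbf{Main obstacle.} The principal difficulty is the unavailability of the standard entropy or Dirichlet-form machinery, caused by the degeneracy of the jump rates and the two-phase structure of the invariant measures. This is sidestepped by the direct martingale estimate used in Step 2, which only requires attractiveness and the density control of Lemmas \ref{lem:upperbound} and \ref{lem:ZRdensity}. A secondary subtlety is that the concave function $\mathcal{G}$ coincides with $E_{\mu^\star_\alpha}[g]$ on $(1,\infty)$ and vanishes on $[0,1]$ precisely so that $V_\ell$ vanishes on \emph{each} ergodic component of any admissible stationary measure, even though $E_{\mathscr{R}^\infty}[g]\neq\mathcal{G}(E_{\mathscr{R}^\infty}[\omega_0])$ for non-ergodic mixtures; this is the key structural reason why passing to ergodic components, rather than working directly with $\mathscr{R}^\infty$, is necessary.
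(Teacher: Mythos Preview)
Your proposal is correct and follows essentially the same approach as the paper: form the space--time averaged empirical measure, show tightness (via stochastic domination by equilibrium), identify any weak limit as an element of $\mathscr{I}\cap\mathscr{S}$, apply the decomposition of Lemma~\ref{lem:ergodic}\,(i), and conclude by the law of large numbers on each ergodic component. The only noteworthy difference is that you supply an explicit martingale argument for stationarity of the limit, whereas the paper simply asserts this fact; your added detail is correct and welcome, though for tightness Lemma~\ref{lem:upperbound} alone already suffices (Lemma~\ref{lem:ZRdensity} is not needed here).
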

\begin{proof}[Proof of Lemma \ref{lem:OBE}]
The proof is analogous to Funaki's \cite[Theorem 4.1]{funaki1999free}, we simply sketch it. Denote by $\tau_x$ the translation by $x$ of a configuration, $(\tau_x \omega)_y=\omega_{x+y}$, and consider the space-time average of the process' distribution on $[0,T]\times \T_M$, 
\[\overline{\mu}^T_M(\cdot)=\frac{1}{TM}\int_0^T\sum_{x\in \T_M}{\color{red}\mathbf{P}_{\mu_M}}(\tau_x \omega(t)=\cdot)dt.\]
\lj{Since the sequence $(\mu_M)_{M \geq 1}$ is tight, the sequence $(\overline{\mu}^T_M)_{M\geq 1}$ is also tight}, and any of its limit point $\mm{\overline{\mu}^T}$ is translation-invariant and stationary \ccl{(because the generator is sped up by $M^2$, and therefore relaxes to stationarity very fast on a local scale, see \cite[Theorem 4.1]{funaki1999free} for the complete proof)} for the infinite volume zero-range generator $\genzr$ obtained from \eqref{eq:DefLM} with $p=\frac12$ and $ \bL_M = \Z$.

\medskip

In particular,  by Lemma \ref{lem:ergodic}, we have an explicit decomposition for the zero-range translation-invariant stationary measures, so that there must exist $\lambda\in [0,1]$ and a probability  measure $\beta$ on  $[1,+\infty)$ such that 
\begin{equation}
\label{eq:decompmubar}
\overline{\mu}(\cdot)=\lambda \overline{\mu}_{\mathcal{F}}(\cdot)+(1-\lambda)\int_{[1,+\infty)} \beta (d\alpha)\mu^\star_\alpha(\cdot).
\end{equation}
In this identity, $\overline{\mu}_{\mathcal{F}}$ is a measure supported on the frozen states $\mathcal{F}=\{0,1\}^\Z$. For any $\alpha\geq 1$, under the equilibrium measure,
\[\limsup_{\ell\to\infty}{\bf E}_{\mu^\star_\alpha}\bigg[\bigg| \frac{1}{2\ell+1}\sum_{y=-\ell}^\ell g(\omega_y)-\mathcal{G}\pa{\omega_y^\ell}\bigg|\bigg]=0\]
by the strong law of large numbers, whereas for any $\omega \in \mathcal{F}$ both terms inside the absolute value vanish. This, together with \eqref{eq:decompmubar}, yields
\[\limsup_{\ell\to\infty}{\bf E}_{\overline{\mu}}\bigg[\bigg| \frac{1}{2\ell+1}\sum_{y=-\ell}^\ell g(\omega_y)-\mathcal{G}\pa{\omega_y^\ell}\bigg|\bigg]=0,\]
which proves  Lemma \ref{lem:OBE}.
\end{proof}

\subsubsection{Two-blocks estimate}
We now prove a two-blocks estimate in the supercritical region, since the contribution of the subcritical one to the hydrodynamic limit vanishes.

\begin{lemma}
\label{lem:TBE}
Using the same notations as before, for any $T\geq 0$ and any positive $\delta$,
\begin{multline}
\label{eq:TBE}
\limsup_{\ell\to\infty}\limsup_{\varepsilon\to 0}\limsup_{M\to\infty}\int_0^T\frac{1}{M}\sum_{y\in \T_M}\mathbf{P}_{\mu_M}\Big(\big|\omega^\ell_y(t)-\omega^{\varepsilon M}_y(t)\big|>\delta, \\
\; \omega^\ell_y(t), \;\omega^{\varepsilon M}_y(t)>1+\delta\Big)dt=0,
\end{multline}
and for any $T\geq 0$
\begin{equation}
\label{eq:TBEG}
\limsup_{\ell\to\infty}\limsup_{\varepsilon\to 0}\limsup_{M\to\infty}\mathbf{E}_{\mu_M}\bigg[\int_0^T\frac{1}{M}\sum_{y\in \T_M}\Big|\mathcal{G}\big(\omega^\ell_y(t)\big)-\mathcal{G}\big(\omega^{\varepsilon M}_y(t)\big)\Big|dt\bigg]=0.
\end{equation}
\end{lemma}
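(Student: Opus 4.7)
The proof would adapt Rezakhanlou's two-blocks estimate \cite[Lemma 6.2]{rezakhanlou91} for attractive particle systems, with crucial modifications to handle the two-phase nature of the FZRP. The key ingredients are the attractiveness of the dynamics (so $L^1$ distances are non-increasing under the basic coupling of Section~\ref{sec:coupling}) and the stochastic domination given by Lemma \ref{lem:upperbound}, which provides a coupling $\omega(t) \leq \zeta(t)$ with a stationary process $\zeta(t) \sim \mu^\star_{\bar\alpha}$. The main novelty compared to Rezakhanlou's classical setting is the absence of equilibrium states below the critical density $\alpha_c = 1$, which is precisely why the two-blocks estimate \eqref{eq:TBE} is stated only on the event where both local averages are supercritical.

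The plan for \eqref{eq:TBE} is as follows. By stationarity of $\zeta$, the equilibrium two-blocks estimate $\mathbf{E}_{\mu^\star_{\bar\alpha}}[|\zeta^\ell_y(t) - \zeta^{\varepsilon M}_y(t)|] \to 0$ reduces to the law of large numbers for i.i.d.\;geometric random variables. The naive triangle inequality
\[
|\omega^\ell_y - \omega^{\varepsilon M}_y| \leq (\zeta^\ell_y - \omega^\ell_y) + (\zeta^{\varepsilon M}_y - \omega^{\varepsilon M}_y) + |\zeta^\ell_y - \zeta^{\varepsilon M}_y|
\]
does not close the estimate by itself, since conservation of particles gives $\frac{1}{M}\sum_y (\zeta^\ell_y - \omega^\ell_y) = \frac{1}{M}\sum_y (\zeta_y - \omega_y) \to \bar\alpha - \int \alpha^{\rm ini}(v)\,dv > 0$. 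I would therefore exploit the indicator event to localize: on $\{\omega^{\varepsilon M}_y > 1+\delta\}$, the ball of size $\varepsilon M$ around $y$ carries enough mass to support a local lower coupling with a reference process at a fixed supercritical density $\alpha_0 \in (1, 1+\delta)$. Sandwiching $\omega^\ell_y$ between the local averages of two equilibrium processes at constant densities $\alpha_0$ and $\bar\alpha$, together with the equilibrium LLN, should then yield the desired bound.

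For \eqref{eq:TBEG}, I would exploit that $\mathcal{G}(r) = (1 - r^{-1})\mathbbm{1}_{r>1}$ is globally $1$-Lipschitz on $\R_+$, bounded by $1$, and satisfies $\mathcal{G}(r) \leq \delta$ whenever $r \leq 1+\delta$. Splitting according to the event $A = \{\omega^\ell_y > 1+\delta,\; \omega^{\varepsilon M}_y > 1 + \delta\}$ gives
\[
|\mathcal{G}(\omega^\ell_y) - \mathcal{G}(\omega^{\varepsilon M}_y)| \leq |\omega^\ell_y - \omega^{\varepsilon M}_y|\,\mathbbm{1}_A + [\mathcal{G}(\omega^\ell_y) + \mathcal{G}(\omega^{\varepsilon M}_y)]\,\mathbbm{1}_{A^c}.
\]
The first term is handled by \eqref{eq:TBE}, combined with a truncation using the uniform domination $\omega \leq \zeta \sim \mu^\star_{\bar\alpha}$ (itself giving moments of arbitrary order for the local averages). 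For the second term, on $A^c$ at least one of the two $\mathcal{G}$-values is bounded by $\delta$; the other is controlled via Lemma \ref{lem:OBE}, which allows replacing $\mathcal{G}(\omega^\ell_y)$ by $\frac{1}{2\ell+1}\sum_{y' \in B_\ell(y)} g(\omega_{y'})$ up to vanishing error, yielding a bound by a summable quantity supported on $A^c$. Sending $\delta \to 0$ last concludes the argument.

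The hard part will be the rigorous construction of the local lower coupling invoked in \eqref{eq:TBE}: the global upper coupling from Lemma \ref{lem:upperbound} has no symmetric counterpart below, since $\mu_M$ is not dominated pointwise from below by any supercritical equilibrium. The comparison must therefore be implemented only on balls where the indicator event guarantees sufficient mass, and integrated over the supercritical part of the configuration in a way compatible with the $L^1$ monotonicity of the basic coupling---this is the most technically delicate point of the argument.
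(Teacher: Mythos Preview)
Your proposal has a genuine gap in the approach to \eqref{eq:TBE}, and a related gap in \eqref{eq:TBEG}.

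For \eqref{eq:TBE}, the ``local lower coupling'' you invoke is not a well-defined construction, and this is not merely a technical detail. The event $\{\omega^{\varepsilon M}_y(t)>1+\delta\}$ depends on time, so you cannot use it to set up a basic coupling with a reference process from time $0$. Any coupling must be built at the level of initial configurations, and the initial measure $\mu_M$ is \emph{not} dominated from below by any supercritical product measure on any fixed region (since $\alpha^{\rm ini}$ may take subcritical values). The paper circumvents this by a completely different mechanism: it couples $\omega$ with a family of \emph{pseudo-equilibrium} processes $\zeta^{c_k}$ started from $\mu_{c_k}$ (not $\mu_{c_k}^\star$) at a finite grid of supercritical densities $c_k$, and exploits that the number of sign changes of $\omega(t)-\zeta^{c_k}(t)$ is non-increasing in time (Rezakhanlou's crossing argument). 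Outside a set of at most $O(\varepsilon M)$ sites, $\omega$ and each $\zeta^{c_k}$ are ordered in every $\varepsilon M$-box, and the grid of $c_k$'s then forces $\omega^\ell_y$ and $\omega^{\varepsilon M}_y$ to lie in the same interval of width $O(\delta)$. The law of large numbers for the $\zeta^{c_k}$'s (Lemma \ref{lem:TBE2}) is itself nontrivial since $\mu_{c_k}$ is not stationary; it is proved via a transience-time estimate and a classical entropy bound after the ergodic component is reached. None of this appears in your sketch.

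For \eqref{eq:TBEG}, your case split also leaks. On $A^c$ you may have, say, $\omega^{\varepsilon M}_y\leq 1+\delta$ but $\omega^\ell_y$ large, giving $|\mathcal{G}(\omega^\ell_y)-\mathcal{G}(\omega^{\varepsilon M}_y)|$ of order one; neither Lemma \ref{lem:OBE} nor the upper domination controls this. The paper rules out this ``mixed'' case precisely via the same ordering argument: outside a negligible set, being below $\zeta^{c_1}$ (with $c_1=1+2\delta$) forces \emph{both} averages to be $\leq 1+3\delta$ simultaneously, so $\mathcal{G}$ of each is small. This dichotomy is the heart of the proof and is not recoverable from your decomposition.
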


To prove the two-blocks estimate, we adapt  Rezankhalou's coupling argument to our pseudo-equilibrium measures, together with the following result, that states that the two-blocks estimate holds starting from a uniform supercritical density.

\begin{lemma}
\label{lem:TBE2}
For any $\alpha> 1$, $ T>0$,
\begin{equation}
\label{eq:TBE2}
\limsup_{\ell\to\infty}\limsup_{\varepsilon\to 0}\limsup_{M\to\infty}\mathbf{E}_{\mu_\alpha}\bigg[\int_0^T\big|\omega^\ell_0(t)-\omega^{\varepsilon M}_0(t)\big|dt\bigg]=0.
\end{equation}
Furthermore, 
\begin{equation}
\label{eq:TBE3}
\limsup_{\varepsilon\to 0}\limsup_{M\to\infty}\mathbf{E}_{\mu_\alpha}\bigg[\int_0^T\big|\omega^{\varepsilon M}_0(t)-\alpha\big|dt\bigg]=0.
\end{equation}
Equation \eqref{eq:TBE3} also holds for the microscopic local density, \emph{i.e.}~with $\varepsilon M$ replaced by $\ell$ that goes to infinity after $M$.
\end{lemma}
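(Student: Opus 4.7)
The proof of Lemma \ref{lem:TBE2} rests on two observations: $\mu_\alpha$ is strictly supercritical ($\alpha > 1$), so the FZRP enters the ergodic component in a subdiffusive time; and on the ergodic component, the map $\omega \mapsto \widetilde\omega := \omega - \mathbf{1}$ is a bijection with the standard symmetric ZRP of rate $\tilde g(k) = \mathbbm{1}_{\{k\geq 1\}}$ on $\mathbb{N}^{\mathbb{L}_M}$, for which the one- and two-blocks estimates are classical \cite[Chapter 5]{klscaling}.

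I would start by exploiting the shift identity from Subsection \ref{sec:coupling}: if $\omega(0)\sim \mu_\alpha$, then $\omega(0)+\mathbf{1} \sim \mu^\star_{\alpha+1}$, and the basic coupling yields $\omega(t)\leq \omega^+(t)$ for all $t$, with $\omega^+$ stationary at density $\alpha+1$. This provides uniform $L^p$ bounds on $\omega_0(t)$, $\omega^\ell_0(t)$, and $\omega^{\varepsilon M}_0(t)$; in particular, the contribution to the integrals in \eqref{eq:TBE2}--\eqref{eq:TBE3} over any initial slice $[0,\delta]$ is $\mathcal{O}(\delta)$ and may be discarded at the end by letting $\delta \to 0$.

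Next, I would prove that the transience time $\tau_M := \inf\{t\geq 0 : \omega_y(t)\geq 1 \text{ for all } y\in \mathbb{L}_M\}$ satisfies $\tau_M/M^2 \to 0$ in probability, by adapting the FEP argument of \cite[Theorem 2.6]{blondel2021stefan} to the FZRP setting (the strict supercriticality $\alpha > 1$ is essential here). Hence for $t \in [\delta,T]$, the process is ergodic with probability tending to $1$, and the shifted process $\widetilde\omega(t) = \omega(t) - \mathbf{1}$ evolves as the standard symmetric ZRP, whose reference equilibrium $\mu_{\alpha-1}$ (the product geometric measure on $\mathbb{N}$ with mean $\alpha - 1 > 0$, recovered from \eqref{eq:nualpha}) is a genuine product invariant. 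The classical entropy method of \cite[Chapter 5]{klscaling} then delivers \eqref{eq:TBE2} and \eqref{eq:TBE3} for $\widetilde\omega$, provided the law of the restart datum $\widetilde\omega(\delta)$ has relative entropy $\mathcal{O}(M)$ with respect to $\mu_{\alpha-1}$. This I would establish by directly computing $H(\mu_\alpha \,|\, \mu_{\alpha-1})$, which is finite per site (both being explicit geometric distributions on $\mathbb{N}$), and propagating this bound forward via data-processing under the standard ZRP dynamics from time $\tau_M$ onward. The conclusions then transfer to $\omega$ through the pointwise identity $\omega^\ell_0 = \widetilde\omega^\ell_0 + 1$.

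The main obstacle is the subdiffusive transience bound in the third paragraph: the FEP version in \cite{blondel2020hydrodynamic, blondel2021stefan} uses features specific to the exclusion rule that must be carefully transposed to the zero-range setting. A secondary delicate point is bridging the transient interval $[0, \tau_M]$, during which $\mu_{\alpha-1}$ is not invariant for the FZRP (the process is still partially frozen or transient); one must either argue that the entropy at time $\tau_M$ inherits the $\mathcal{O}(M)$ bound from time $0$ by estimating the entropy production on this short interval, or bypass the entropy route altogether by using the attractive coupling of Step 1 to transfer the two-blocks estimate from $\omega^+(t)$ (which is truly at equilibrium $\mu^\star_{\alpha+1}$ and for which the estimates follow trivially from the SLLN for product measures) to $\omega(t)$ via a gap argument in the spirit of Lemma \ref{lem:initprofileZR}.
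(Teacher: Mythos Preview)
Your outline is essentially the same as the paper's: wait a subdiffusive time $t_M=(\log M)^{4\beta}/M^2$ for the process started from $\mu_\alpha$ to reach the ergodic component (with probability $1-o_M(1)$), and then run the classical entropy method of \cite[Chapter 5]{klscaling} on the restricted process, where the FZRP is equivalent (via your shift $\widetilde\omega=\omega-\mathbf 1$, or equivalently by working directly against $\mu_\alpha^\star$) to a nondegenerate zero-range process. The paper cites \cite[Proposition 4.1]{blondel2020hydrodynamic} for the transience bound, so your first ``obstacle'' is already handled by existing results once one checks $\delta$-regularity of $\mu_\alpha$.

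The one genuine difference is your ``secondary delicate point''. Neither of your proposed workarounds --- tracking entropy production on $[0,\tau_M]$, or transferring the estimate from the dominating equilibrium process $\omega^+$ --- is what the paper does, and in fact option (b) does not obviously work: stochastic domination $\omega\leq \omega^+$ gives no direct control on $|\omega^\ell_0-\omega^{\varepsilon M}_0|$ in terms of $|\omega^{+,\ell}_0-\omega^{+,\varepsilon M}_0|$. The paper's resolution is cleaner: one bounds the entropy of the post-transient law $\widetilde\mu_0$ directly via
\[
H(\widetilde\mu_0\mid \mu_\alpha^\star)\leq -\int \log \mu_\alpha^\star(\omega)\,d\widetilde\mu_0(\omega),
\]
and observes that $-\log\mu_\alpha^\star(\omega)$ is an affine function of the total mass $\sum_y\omega_y$, which is conserved by the dynamics. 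Hence the right-hand side is determined (up to the conditioning, which has probability $\to 1$) by ${\bf E}_{\mu_\alpha}\big[\sum_y\omega_y\big]=\alpha M$, giving $H(\widetilde\mu_0\mid\mu_\alpha^\star)\leq C(\alpha)M$ with no need to analyze the transient dynamics at all. This conservation-of-mass trick is the missing ingredient in your sketch.
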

Note that if $\mu_\alpha$ is replaced by the equilibrium measure $\mu_\alpha^\star$, this result would be a direct consequence of the strong law of large numbers.

\begin{proof}[Proof of Lemma \ref{lem:TBE2}]
The strategy to prove this result is the following: 
\begin{itemize}
\item First, \mm{we use a result from \cite[Proposition 4.1]{blondel2020hydrodynamic} which estimates the time scale over which the configuration becomes ergodic with high probability: more precisely, choosing $\beta>0$ large enough, and denoting $t_M=(\log M)^{4\beta}/M^2$, we have, for any fixed $\alpha>1$, 
\[\liminf_{M\to\infty}\mathbf{P}_{\mu_\alpha}\big(\omega_y\pa{t_M}\geq 1, \;\forall y\in\T_M\big)=1.\]
This result indeed holds since} for any $\delta>0$, the probability under $\mu_\alpha$ for the configuration to be $\delta$-regular (in the sense of \cite[equation (4.6)]{blondel2020hydrodynamic} is of order $1-o_N(1)$.
\item We then define for any positive time $t$ the measure 
\[\widetilde{\mu}_t(\cdot)=\mathbf{P}_{\mu_\alpha}\Big(\omega(t_M+t)=\cdot \;\big|\; \omega_y\pa{t_M}\geq 1, \;\forall y\in\T_M\Big),\]
of the process at time $t_M+t$ conditioned to have reached the ergodic component before time $t_M$, and define the density $f_t=d\widetilde{\mu}_t/ d\mu_\alpha^\star$. We can then define  the relative entropy 
$H(\widetilde{\mu}_t\mid \mu_\alpha^\star):=\int  f_t \log(f_t) d\mu_\alpha^\star.$
\smallskip

\item Starting from the distribution $\widetilde{\mu}_0$, the process is ergodic, and assuming that $H(\widetilde{\mu}_0\mid \mu_\alpha^\star)\leq CM$, Lemma \ref{lem:TBE2} follows from the standard two-blocks estimate for the zero-range process (see \emph{e.g.}~\cite[Lemma 3.2, p.83]{klscaling}). To estimate the initial entropy, we write by direct calculations
\[H(\widetilde{\mu}_0\mid \mu_\alpha^\star)\leq - \int  \log(\mu_\alpha^\star(\omega)) d\widetilde{\mu}_0(\omega)=\ccl{M\log \alpha}+ \log\pa{\frac{\alpha-1}{\alpha}}{\bf E}_{\widetilde{\mu}_0}\bigg[\sum_{y\in \T_M}\omega_y \bigg].\]
Since the number of particles is conserved by the zero-range dynamics, recalling that $\overline{\alpha} $ is an upper bound on $\alpha^{\rm ini}+1$, the second integral above is less than $M \overline{\alpha} /\mathbf{P}_{\mu_\alpha}(\omega_x\pa{t_M}\geq 1, \;\forall x\in\T_M),$
so that for $M$ large enough, 
\begin{equation}
\label{eq:H0estimate}
H(\widetilde{\mu}_0\mid \mu_\alpha^\star)\leq \overline{\alpha} C(\alpha) M.
\end{equation}
\end{itemize}
We do not detail the proof of the hydrodynamic limit for the ergodic zero-range process, since it is detailed in \cite[Chapter 5]{klscaling}, under condition \eqref{eq:H0estimate}. The only hurdle is that assumption (FEM) in \cite[Chapter 5]{klscaling} does not hold, however it is only used to cut off large densities, which can be done in our case using attractiveness (\emph{cf.}~\eqref{eq:upperbound}). This proves in particular the two-blocks estimate \eqref{eq:TBE2}.

\medskip

Since for fixed $\varepsilon>0$, $\omega_0^{\varepsilon M}(t)$ can be directly expressed (up to a small error term) as a function of the empirical measure of the process, the second identity \eqref{eq:TBE3} is a consequence of the hydrodynamic limit in the case of a constant supercritical initial density, together with the dominated convergence Theorem as $\varepsilon \to 0$.

The last statement of the lemma readily follows from the first two.
\end{proof}

We now prove the two-blocks estimate.

 \begin{proof}[Proof of Lemma \ref{lem:TBE}]
Thanks to the pseudo-equilibrium two-blocks estimate (Lemma \ref{lem:TBE2}), the proof of  \eqref{eq:TBE} is straightforwardly adapted from Rezankhalou's proof of \cite[Lemma 6.2]{rezakhanlou91}. Recall that $\overline{\alpha}$ is a uniform bound on $\alpha^{\rm ini}+1$. We first assume that $\alpha^{\rm ini}$ has only finite cross values, in the sense that the function $\alpha^{\rm ini}(\cdot)-c$ strictly  changes sign at most a finite number of times (depending on $c$) in $\T$. For any $c\geq 0$, we denote by $\mathcal{N}_c$ the number of times $\alpha^{\rm ini}-c$ strictly changes sign, and  denote by $\zeta^c$ a zero-range process started from the pseudo-equilibrium uniform profile with density $c$,  
 \[\zeta^c (0)\sim \mu_c,\]
 where $\mu_c$ is the non-stationary initial distribution given by \eqref{eq:nualpha}.
For any $c\geq 0$, we couple $\zeta^c$ with $\omega$  by the basic coupling  described in Section \ref{sec:coupling}. Given two zero-range configurations $\omega$, $\omega'$, denote by $\mathcal{N}(\omega, \omega')$ the number of times $\omega-\omega'$ strictly changes sign. Since we start both $\omega$ and the $\zeta^c$'s from product measures with marginals given by \eqref{eq:nualpha},  the initial configurations can be chosen in such a way that for any $c\geq 0$, and any $y\in \T_M$
\[\omega_y(0)-\zeta^c_y(0) \quad \mbox{ has same sign as } \quad  \alpha^{\rm ini}(y/N) -c,\]
so that $\mathcal{N}(\omega(0), \zeta^c(0))\leq \mathcal{N}_c.$ Furthermore, since $\omega$ and the $\zeta^c$'s evolution are coupled, the function $t\mapsto \mathcal{N}(\omega(t), \zeta^c(t))$ is non-increasing for any $c$, so that in particular, for any $c, \;t\geq 0$, 
\begin{equation}
\label{eq:Ntc}
\mathcal{N}(\omega(t), \zeta^c(t))\leq \mathcal{N}_c.
\end{equation}
We do not prove this last statement, it is \cite[Lemma 6.5]{rezakhanlou91}.

We now prove the two-blocks estimate in the case where $\alpha^{\rm ini}$ has only finite cross-values. Define for any $c, \;t\geq 0,$ and any integer $\ell\geq 0,$ the set 
\[\Gamma^c_{\ell}(t):=\left\{y\in \T_M, \; \omega(t)\mbox{ and $\zeta^{c}(t)$ are not ordered in }B_{\ell}(y)\right\}\subset \T_M,\]
where as before $B_\ell(y)=\{y-\ell,\dots, y+\ell\}$ is the box of size $\ell$ around $y$.
Thanks to \eqref{eq:Ntc}, we have for any $M \in \N$, and any $\varepsilon >0$,
\begin{equation}
\label{eq:Gammacbound}
\left|\Gamma^{c}_{\varepsilon M}(t)\right|\leq (2 \varepsilon M +3)\mathcal{N}_{c}.
\end{equation}
Further note that for any $y\notin \Gamma^{c}_{\varepsilon M}(t)$, and any $\ell \leq \varepsilon M$, $\omega(t)$ and $\zeta^{c}(t)$ are also ordered in $B_{\ell}(y)$.

Fix an integer $n>0$. For $1\leq k\leq n+1$, shorten  $c_k=1+(k+1)\overline{\alpha}/n$, and define 
\[\bar\Gamma_M(t)=\T_M\setminus \bigcup_{k=1}^{n+1} \Gamma^{c_k}_{\varepsilon M}(t),\]
which is the set of points around which  $ \omega(t)$ and $\zeta^{c_k}(t)$ are ordered for each $k$.
Note that for any $y\notin \Gamma^c_{\varepsilon M}(t)$, assuming that 
\[\quad \big|\zeta^{c,\ell}_y(t)-c\big|\leq \delta \quad  \mbox{ and }\quad \big|\zeta^{c,\varepsilon M}_y(t)-c\big|\leq \delta,\]
we must have 
\[\omega^\ell_y(t), \; \omega^{\varepsilon M}_y(t)\geq c-\delta \quad \mbox{ or }\quad \omega^\ell_y(t) , \; \omega^{\varepsilon M}_y(t)\leq c+\delta.\]
In particular, choosing $\delta=\overline{\alpha}/n$, for any $y\in \bar\Gamma_M(t)$, we must have:
\begin{enumerate}
\item either there exists $1\leq k\leq n+1$ such that 
\[\quad \big|\zeta^{c_k,\ell}_y(t)-c_k\big|> \delta \quad  \mbox{ or }\quad \big|\zeta^{c_k,\varepsilon M}_y(t)-c_k\big|> \delta,\]
\item or $\omega^\ell_y(t) , \; \omega^{\varepsilon M}_y(t)\geq c_{n+1}-\delta=1+\overline{\alpha}+\delta$
\medskip

\item or $\omega^\ell_y(t) , \; \omega^{\varepsilon M}_y(t)\leq c_{1}+\delta=1+3\delta$
\medskip

\item or for any $1\leq k\leq n+1$
\[\omega^\ell_y(t), \; \omega^{\varepsilon M}_y(t)\geq c_k-\delta \quad \mbox{ or }\quad \omega^\ell_y(t) , \; \omega^{\varepsilon M}_y(t)\leq c_k+\delta,\]
and $\omega^\ell_y(t), \; \omega^{\varepsilon M}_y(t) \in ( c_1+\delta, c_{n+1}-\delta)$. Since the $c_k$'s are distant of $\delta$, this last case implies in particular that $|\omega^\ell_y(t)- \; \omega^{\varepsilon M}_y(t)|\leq 3\delta$.
\end{enumerate}
Further note that according to \eqref{eq:Gammacbound}, for any $t\geq 0$,  
\[\mathrm{card}\big(\T_M\setminus\bar\Gamma_M(t)\big) \leq (2 \varepsilon M +3)\sum_{k=1}^{n+1} \mathcal{N}_{c_k}.\]
As before, we denote by $\mathbf{P}_{\overline{\mu}_M}$  and $\mathbf{E}_{\overline{\mu}_M}$ the distribution of the coupled processes started from $\mu_M$, $(\mu_c)_{c\geq 0}$ and the corresponding expectation.
We can now write 
\begin{align*}
\mathbf{E}_{\mu_M}\bigg[\int_0^T&\frac{1}{M}\sum_{y\in \T_M}\mathbbm{1}{\{|\omega^\ell_y(t)-\omega^{\varepsilon M}_y(t)|>3\delta, \; \omega^\ell_y(t), \;\omega^{\varepsilon M}_y(t)>1+3\delta\}}dt\bigg]\\
\leq & \; T\pa{2\varepsilon +\frac{3}{M}}\sum_{k=0}^{n+1} \mathcal{N}_{c_k}\\&+\mathbf{E}_{\overline{\mu}_M}\bigg[\int_0^T\frac{1}{M}\sum_{y\in \bar\Gamma_M(t)}\mathbbm{1}{\{|\omega^\ell_y(t)-\omega^{\varepsilon M}_y(t)|>3\delta, \; \omega^\ell_y(t), \;\omega^{\varepsilon M}_y(t)>1+3\delta\}}dt\bigg].
\end{align*}
Recall that $n$ is fixed, the first term on the right-hand side vanishes as $M\to\infty$ then $\varepsilon \to 0$. Because of the indicator function, and because the $y$'s are in $\overline{\Gamma}_M(t)$, \lj{only  cases 1 and 2 remain}. By union bound, the expectation in the right-hand side is therefore bounded from above by
\begin{multline*}
\int_0^T\frac{1}{M}\sum_{y\in \bar\Gamma_M}\sum_{k=0}^{n+1}\mathbf{P}_{\mu_{c_k}}\pa{\,|\zeta^{\ell}_y(t)-c_k|> \delta}+\mathbf{P}_{\mu_{c_k}}\pa{\,|\zeta^{\varepsilon M}_y(t)-c_k|> \delta}dt\\
+\int_0^T\frac{1}{M}\sum_{y\in \bar\Gamma_M}\mathbf{P}_{\mu_M}\pa{\omega^\ell_y(t) > 1+\overline{\alpha}+\delta}+\mathbf{P}_{\mu_M}\pa{\omega^{\varepsilon M}_y(t) >  1+\overline{\alpha}+\delta}dt.
\end{multline*}
Because the $c_k$'s are all strictly larger than $1$, according to Lemma \ref{lem:TBE2}, the first line vanishes in the limit $M\to\infty, \; \varepsilon\to 0, \; \ell\to\infty$. The second line also vanishes according to Lemma \ref{lem:upperbound} and the law of large number, because $\omega(t)$ is stochastically bounded by an equilibrium process with density $\overline{\alpha}$.
\lj{Since $n$ can be chosen arbitrarily large}, $\delta$ can be chosen arbitrarily small, which proves \eqref{eq:TBE}.

\medskip

We now prove \eqref{eq:TBEG}. Note that $\norm{\mathcal{G}'}_{\infty}=\norm{\mathcal{G}}_{\infty}=1$.
We distinguish five cases for the quantity 
\[Q_y(t):=\big|\mathcal{G}(\omega^\ell_y(t))-\mathcal{G}(\omega^{\varepsilon M}_y(t))\big|\] 
inside the absolute values.
\begin{enumerate}
\item If $y\in \Gamma^{c_1}_{\varepsilon M}(t)$, we have \emph{a priori} only a crude bound over $|Q_y(t)|\leq 2$, however the number of such $y$'s is less than $(2\varepsilon M+3)\mathcal{N}_{c_1}$, so that the contribution of the  $y\in\Gamma^{c_1}_{\varepsilon M}(t)$ vanishes in the limit $M\to\infty$ then $\varepsilon \to 0$.
\smallskip
\item If $|\zeta^{c_1,\ell}_y(t)-c_1|> \delta$  or $|\zeta^{c_1,\varepsilon M}_y(t)-c_1|> \delta$, we also only have $Q_y(t)\leq 2$, however the probability that this occurs is small, so that according to \eqref{eq:TBE3} this contribution also vanishes as $M\to\infty$ then $\varepsilon \to 0$ then $\ell\to\infty$. \smallskip
\item If $|\omega^\ell_y(t)-\omega^{\varepsilon M}_y(t)|\leq \delta$, then $Q_y(t)\leq \delta$. 
\smallskip
\item If $|\omega^\ell_y(t)-\omega^{\varepsilon M}_y(t)|> \delta$, and both $\omega^\ell_y(t)$, $\omega^{\varepsilon M}_y(t)$ are larger than $1+\delta$, the contribution vanishes in the triple limit according to \eqref{eq:TBE}.
\smallskip
\item Because we eliminated cases 1 and 2,  \lj{the only case remaining is where} both $\omega^\ell_y(t)$ and  $\omega^{\varepsilon M}_y(t)$ are smaller than $1+3\delta$, in which case both $\mathcal{G}(\omega^\ell_y(t))$ and $\mathcal{G}(\omega^{\varepsilon M}_y(t))$  are less than $3 \delta$.
\end{enumerate}
All three contributions 1, 2 and 4 vanish in the limit, therefore the left-hand side in \eqref{eq:TBEG} is less than $4 T \delta$, and since $\delta$ is arbitrarily small we obtain \eqref{eq:TBEG}.
\end{proof}

\subsubsection{Conclusion in the symmetric case}
With the one and two-blocks estimates stated in Lemmas \ref{lem:OBE} and \ref{lem:TBE} respectively, the proof of the hydrodynamic limit is straightforward, we simply sketch it.
We start by writing Dynkin's formula and performing summations by parts, and for any smooth test function $\varphi:\T\to \R$, since $\genzr \omega_y=g(\omega_{y+1})+g(\omega_{y+1})
-2g(\omega_y)$, we obtain
\[\frac{1}{M}\sum_{y\in \T_M} \varphi (\tfrac y M)\omega_y(t)=\frac{1}{M}\sum_{y\in \T_M} \varphi (\tfrac y M)\omega_y(0)+\int_0^t\frac{1}{M}\sum_{y\in \T_M}g(\omega_y(s)) M^2\Delta_M\varphi(\tfrac y M)ds+\mathscr{M}^\varphi_{t,M},\]
where $\Delta_M \varphi(\frac y M)=\varphi(\frac{y+1}M)+\varphi(\frac{y-1}M)-2\varphi(\frac y M)$ is the discrete Laplace operator. In the identity above, $\mathscr{M}^\varphi_{t,M}$ is a martingale whose quadratic variation is of order $\mathcal{O}(\frac 1M)$  and vanishes as $M\to\infty$ (see e.g. \cite[Appendix 1, Lemma 5.1, p. 330]{klscaling}). Since $\varphi$ is a smooth function, in the identity above, $g(\omega_y(s)) $ can be replaced by its average over a small microscopic box, 
\[\frac{1}{2\ell+1}\sum_{y'\in B_\ell(y)} g(\omega_{y'}(s)),\]
which in turn, thanks to the one-block estimate (Lemma \ref{lem:OBE}) can be replaced in the limit by $\mathcal{G}(\omega^\ell_y(t))$. According to \eqref{eq:TBEG}, $\mathcal{G}(\omega^\ell_y(t))$ can in turn be replaced by $\mathcal{G}(\omega^{\varepsilon M}_y(t))$. Letting $M\to\infty$ and $\varepsilon\to 0$, and because the microscopic configuration is stochastically dominated by an equilibrium configuration distributed as $\mu^\star_{\overline{\alpha}}$, we obtain that any limit point $ {\bf Q}^*$ of the distribution  ${\bf Q}_M$ of the empirical measure 
\[\pi_t^M:=\frac{1}{M}\sum_{y\in \T_M} \omega_y(t) \delta_{y/M}\]
is concentrated on trajectories $\pi_t=\alpha_t(u)du$ which are at any time $t$ absolutely continuous w.r.t.~the Lebesgue measure, and which satisfy
\[\int_{\T}\varphi(v)\alpha_t(v)dv=\int_{\T}\varphi(v)\alpha_0(v)dv+\int_0^t\int_{\T}\mathcal{G}(\alpha_s(v))\partial_v^2 \varphi(v)ds.\]
It is straightforward to show that \eqref{eq:weakF1ZR} holds if $\varphi$ also depends on the time variable, which proves the hydrodynamic limit in the symmetric case, since the solution is unique.

\subsection{Asymmetric case} In this subsection, we prove Theorem \ref{thm:zr} in the asymmetric case.  By Lemma \ref{lem:initprofileZR}, we only need to prove the result for product initial measures with a slowly varying density profile $\alpha^{\rm ini}$. We first prove a microscopic version of the entropy inequality, \emph{cf.\;}Lemma \ref{lem:MicroEntroIneqn}. The main ingredient for the proof is the characterization of invariant and translation invariant measures for the original and coupled processes, which is already proved in Lemma \ref{lem:ergodic}. Then we prove that the initial condition holds for product measures with a slowly varying density profile $\alpha^{\rm ini}$. The main problem here is that the initial measure is not in local equilibrium and it is impossible to couple it with invariant measures of the process. Instead, we couple with the process starting from pseudo-equilibrium distributions as in the symmetric case. At last, we conclude the proof  following the steps presented in \cite{rezakhanlou91}. 

\subsubsection{Microscopic entropy inequality} We first prove the following microscopic entropy inequality.

\begin{lemma}[Microscopic entropy inequality]\label{lem:MicroEntroIneqn}
For every non-negative smooth function  $\varphi : \bb{R}_+ \times \bb{R} \rightarrow \bb{R}$ with compact support in $(0,\infty) \times \bb{R}$,  for every $c \geq 0$ and for every $\epsilon > 0$,
\begin{multline}\label{entIne}
	\lim_{\ell \rightarrow \infty}\,\liminf_{M \rightarrow \infty} \, \mathbf{P}_{\mu_M}  \bigg(\int_0^\infty \frac{1}{M} \sum_{y \in\bb{Z}} \Big\{\big|\omega_y^{\ell} (t)- c\big| \partial_t\varphi_t(\tfrac{y}{M}) \\
	+ (2p-1) \big|\mc{G}(\omega_y^{\ell}(t)) - \mc{G} (c)\big| \partial_v \varphi_t (\tfrac{y}{M})\Big\}\,ds\geq - \epsilon\bigg)=1.
\end{multline}
\end{lemma}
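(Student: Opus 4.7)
\emph{Strategy.} My plan is to implement Rezakhanlou's Kruzhkov-type coupling argument from \cite{rezakhanlou91}, adapted to our non-equilibrium setting. I couple $\omega$ (started from $\mu_M$) with an auxiliary zero-range process $\zeta$ started from the pseudo-equilibrium product geometric measure $\mu_c$ defined in \eqref{eq:nualpha}, via the basic coupling of Section \ref{sec:coupling}. The microscopic counterpart of the convex entropy $r\mapsto|r-c|$ is $|\omega_y(t)-\zeta_y(t)|$, and the associated signed current is the microscopic Kruzhkov flux $\mathfrak{q}(\omega,\zeta;g):=\mathrm{sign}(\omega-\zeta)(g(\omega)-g(\zeta))$, which coincides with $|g(\omega)-g(\zeta)|$ because $g$ is non-decreasing.

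\emph{Dynkin's formula.} Applying Dynkin's formula to $V_M(t):=M^{-1}\sum_{y\in\Z}|\omega_y(t)-\zeta_y(t)|\varphi_t(y/M)$, a direct computation of $\tilde{\mathscr{L}}_M^{\mathrm{FZR}}|\omega_y-\zeta_y|$ using the coupled transition rules gives, after summation by parts against $\varphi$, a divergence whose symmetric part is $O(M^{-1}\Delta_M\varphi)$ (negligible at the hyperbolic scale even after multiplication by $M$) and whose antisymmetric part produces exactly $(2p-1)M^{-1}\sum_y\mathfrak{q}(\omega_y,\zeta_y;g)\partial_v\varphi(y/M)$. Since $\varphi$ is compactly supported in $(0,\infty)\times\R$, choosing $t$ beyond the temporal support of $\varphi$ annihilates the boundary terms $V_M(0)$ and $V_M(t)$, and the associated martingale has quadratic variation of order $M^{-1}$. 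This yields, in coupled probability,
\begin{equation*}
\int_0^\infty \frac{1}{M}\sum_{y\in\Z}\Bigl\{|\omega_y(s)-\zeta_y(s)|\,\partial_s\varphi_s(\tfrac{y}{M}) + (2p-1)\,\mathfrak{q}(\omega_y(s),\zeta_y(s);g)\,\partial_v\varphi_s(\tfrac{y}{M})\Bigr\}\,ds \xrightarrow[M\to\infty]{} 0 .
\end{equation*}

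\emph{One-block estimate and replacement $\zeta^\ell\to c$.} I next pass from microscopic to block-averaged quantities by a one-block estimate for the coupled dynamics, obtained by transporting the Funaki argument of Lemma \ref{lem:OBE} to $\tilde{\mathscr{L}}_M^{\mathrm{FZR}}$ and invoking the decomposition of translation-invariant stationary coupled measures provided by Lemma \ref{lem:ergodic}(ii). Since any space-time-averaged limit point of the coupled distribution is supported on pairs that are either both frozen (where $\mathfrak{q}=0$ and block averages of $|\omega-\zeta|$ coincide with $|\omega_y^\ell-\zeta_y^\ell|$ trivially) or locally ordered (where $g$ being monotone makes the local averages linear in the marginals and the law of large numbers under $\mu_\alpha^\star$ applies), I obtain the replacements
\begin{equation*}
\frac{1}{2\ell+1}\sum_{y'\in B_\ell(y)} |\omega_{y'}-\zeta_{y'}| \longrightarrow |\omega_y^\ell-\zeta_y^\ell|, \qquad \frac{1}{2\ell+1}\sum_{y'\in B_\ell(y)} \mathfrak{q}(\omega_{y'},\zeta_{y'};g) \longrightarrow |\mathcal{G}(\omega_y^\ell)-\mathcal{G}(\zeta_y^\ell)|,
\end{equation*}
in probability as $M\to\infty$ then $\ell\to\infty$, using the monotonicity of $\mathcal{G}$ for the second identity. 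It remains to replace $\zeta_y^\ell$ by $c$: for $c\geq 1$ this follows from Lemma \ref{lem:TBE2} applied to $\zeta$; for $c\in[0,1)$ the $\zeta$-process reaches a frozen state in negligibly short macroscopic time while conserving its local density, so $\zeta_y^\ell(t)\to c$ while $\mathcal{G}(\zeta_y^\ell(t))\to 0=\mathcal{G}(c)$ directly from $\mathcal{G}\equiv 0$ on $[0,1]$. Injecting these replacements in the Dynkin identity above produces \eqref{entIne}.

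\emph{Main obstacle.} The delicate step is expected to be the pseudo-equilibrium replacement $\zeta_y^\ell\to c$ in the subcritical range $c\in[0,1)$: no translation-invariant stationary measure of density $c<1$ exists (\emph{cf.}~Lemma \ref{lem:ergodic}(i)), so the usual equilibrium-coupling strategy of Rezakhanlou is not directly available. The workaround is to dominate $\zeta$ by a supercritical equilibrium state via attractiveness (Lemma \ref{lem:upperbound}), exploit the conservation of particles to propagate the local density $c$ from the initial distribution $\mu_c$ to later times, and observe that once $\zeta$ is frozen, $\zeta_y\in\{0,1\}$ forces $\mathcal{G}(\zeta_y^\ell)=0$ automatically. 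The adaptation of the one-block estimate to the coupled process is the other nontrivial point, but is handled essentially verbatim as in Lemma \ref{lem:OBE} thanks to the explicit characterization of coupled stationary measures in Lemma \ref{lem:ergodic}(ii).
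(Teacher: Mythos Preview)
Your overall architecture---couple with a reference process $\zeta$ of constant density $c$ via the basic coupling, use Dynkin's formula for $\sum_y|\omega_y-\zeta_y|\varphi$, then run a Funaki-type one-block estimate for the coupled dynamics via Lemma~\ref{lem:ergodic}(ii)---matches the paper's proof. The differences are in the choice of reference measure and in two imprecise steps.

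\textbf{Choice of reference measure.} The paper couples with the \emph{stationary} measure $\mu_c^\star$ (for $c\le 1$ it simply takes an arbitrary Bernoulli product measure, which is concentrated on frozen configurations). This makes the replacement $\zeta_y^\ell\to c$ and $(2\ell+1)^{-1}\sum_{y'}g(\zeta_{y'})\to\mathcal{G}(c)$ a trivial consequence of the law of large numbers at \emph{every} time, and one even gets that the second marginal of any limit point of the space--time averaged coupled law is exactly $\mu_c^\star$. Your choice of the non-stationary $\mu_c$ instead forces you to prove $\zeta_y^\ell(t)\to c$ for the evolving process, which is genuine extra work. You invoke Lemma~\ref{lem:TBE2}, but that lemma is stated and proved for the \emph{symmetric} dynamics (generator $M^2\genzr$, entropy method on the ergodic component); the asymmetric analogue is \eqref{eqn:flat_profile}, whose proof relies on Proposition~\ref{pro:superlimit} and sits \emph{after} Lemma~\ref{lem:MicroEntroIneqn} in the paper. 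Your route is workable if one establishes \eqref{eqn:flat_profile} first, but the paper's choice of $\mu_c^\star$ sidesteps the issue entirely. For $c<1$, your statement that ``the $\zeta$-process reaches a frozen state in negligibly short macroscopic time'' is false on $\Z$: at any finite $t$ there are a.s.\ infinitely many sites with $\zeta_y\ge 2$. What is true (and what the paper uses) is that local densities are trapped between nearby sites that stay empty for all time, which forces $\zeta_y^\ell\to c$; or, more simply, start $\zeta$ frozen from time zero.

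\textbf{Dynkin step is an inequality.} Your display asserts that the coupled entropy expression converges to $0$. In fact $\tgenzr|\omega_y-\zeta_y|$ is only \emph{bounded above} by the discrete divergence you describe, with a non-negative defect coming from annihilation of opposite-sign discrepancies at neighbouring sites. After multiplying by $\varphi\ge 0$ and integrating, this yields the one-sided bound~\eqref{entIne1} (in the paper's notation), not an equality. Since the target statement~\eqref{entIne} is itself one-sided this does not break your argument, but ``$\to 0$'' should read ``$\ge -\epsilon$ with high probability''.
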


\begin{proof}
We adapt the ideas from \cite[Theorem 3.1]{rezakhanlou91}.  For this reason we only sketch the proof. We consider the coupled process $(\omega(t),\zeta(t))$ with generator $M \tgenzr$ and initial distribution $\tilde{\mu}_M:=\mu_M \otimes \mu_c^\star$. In the subcritical region $c\leq 1$ where the equilibrium distribution $\mu_c^\star$ is not defined, we choose arbitrarily $\mu_c^\star$ to be a Bernoulli product measure on $ \Z$, which is concentrated on frozen configurations, and satisfies
\[\lim_{\ell \rightarrow \infty} \omega_y^\ell = c\quad \text{in $\mu_c^\star$ -- probability.} \]

\noindent {\sc Step 1.} We first prove that 
\begin{multline}\label{entIne1}
		\liminf_{M \rightarrow \infty} \, \mathbf{P}_{\tilde{\mu}_M} \bigg(\int_0^\infty \frac{1}{M} \sum_{y \in\bb{Z}} \Big\{\big|\omega_y (t)- \zeta_y (t)\big| \partial_t \varphi _t(\tfrac{y}{M}) \\
		+ (2p-1) \big|\mathbbm{1}_{\{\omega_y (t) \geq 2\}} - \mathbbm{1}_{\{\zeta_y (t) \geq 2\}}\big|  \partial_v \varphi _t(\tfrac{y}{M})\Big\}\,dt\geq - \epsilon\bigg)=1.
	\end{multline}
By Dynkin's formula,\lj{
\begin{multline}
\label{mart1}
	\mc{M}_t^\varphi  := \frac{1}{M} \sum_{y \in\bb{Z}} \big|\omega_y (t)- \zeta_y (t)\big| \varphi_t(\tfrac{y}{M})\\ 
	- \int_0^t  \frac{1}{M} \sum_{y \in\bb{Z}} \Big\{M \tgenzr \big|\omega_y (s)- \zeta_y (s)\big| \varphi (s,\tfrac{y}{M})+  \big|\omega_y (s)- \zeta_y (s)\big| \partial_s \varphi (s,\tfrac{y}{M})\Big\}\,ds
\end{multline}
}is a martingale, and a simple calculation (\emph{cf.}~\cite[Lemma 3.2]{rezakhanlou91} for details) yields that for every $T > 0$,
\[\lim_{M \rightarrow \infty} \mathbf{E}_{\tilde{\mu}_M} \Big[ \sup_{0 \leq t \leq T}  (\mc{M}_t^\varphi)^2 \Big] = 0.\]
Whence,  
\begin{multline}\label{entIne2}
	\liminf_{M \rightarrow \infty} \, \mathbf{P}_{\tilde{\mu}_M} \bigg( \int_0^{\infty}  \frac{1}{M} \sum_{y \in\bb{Z}} \Big\{M \tgenzr \big|\omega_y (t)- \zeta_y (t)\big| \varphi_t(\tfrac{y}{M})+\\
	  \big|\omega_y (t)- \zeta_y (t)\big| \partial_t \varphi_t(\tfrac{y}{M})\Big\}\,dt \geq - \epsilon\Big)=1.
\end{multline}
Since 
\begin{multline*}
 \sum_{y \in\bb{Z}} \varphi_t(\tfrac{y}{M})  \tgenzr |\omega_y (t)- \zeta_y (t)|
\leq \sum_{y \in\bb{Z}} \varphi_t(\tfrac{y}{M}) \Big(p \big|\mathbbm{1}_{\{\omega_{y-1} (t) \geq 2\}} -\mathbbm{1}_{\{ \zeta_{y-1} (t) \geq 2\}}\big| \\
 + (1-p) \big|\mathbbm{1}_{\{\omega_{y+1} (t) \geq 2\}} -\mathbbm{1}_{\{ \zeta_{y+1} (t) \geq 2\}}\big| - \big|\mathbbm{1}_{\{\omega_y (t) \geq 2\}} - \mathbbm{1}_{\{\zeta_y (t) \geq 2\}}\big|  \Big)\\
= \frac{2p-1}{M} \sum_{y \in\bb{Z}} \partial_v \varphi_t(\tfrac{y}{M}) \big|\mathbbm{1}_{\{\omega_y (t) \geq 2\}} - \mathbbm{1}_{\{\zeta_y (t) \geq 2\}}\big|  + o_M (1), \vphantom{\Bigg(}
\end{multline*}
together with \eqref{entIne2}, we have proved \eqref{entIne1}.

\medskip

\noindent {\sc Step 2.} Since $\varphi$ is smooth, we can introduce the block averages into \eqref{entIne1} and obtain that
\begin{multline}\label{entIne3}
		\lim_{\ell \rightarrow \infty} \liminf_{M \rightarrow \infty} \, \mathbf{P}_{\tilde{\mu}_M} \bigg(\int_0^\infty \frac{1}{M} \sum_{y \in\bb{Z}} \Big\{(2\ell + 1)^{-1} \sum_{|y^\prime - y| \leq \ell}\big|\omega_{y^\prime} (t)- \zeta_{y^\prime} (t)\big| \partial_t \varphi_t(\tfrac{y}{M}) \\
		+ (2p-1) (2\ell + 1)^{-1} \sum_{|y^\prime - y| \leq \ell} \big|\mathbbm{1}_{\{\omega_{y^\prime}(t) \geq 2\}} - \mathbbm{1}_{\{\zeta_{y^\prime} (t) \geq 2\}}\big|  \partial_v \varphi_t(\tfrac{y}{M})\Big\}\,dt\geq - \epsilon\bigg)=1.
	\end{multline}
To go from \eqref{entIne3} to \eqref{entIne}, we only need to prove that for every integer $k > 0$ and every $T > 0$, \lj{
\begin{equation}\label{entIne4}
	\lim_{\ell \rightarrow \infty} \limsup_{M \rightarrow \infty} \mathbf{E}_{\tilde{\mu}_M} \bigg[ \int_0^T\hspace{-0.5em} \frac{1}{2kM+1}\hspace{-0.3em} \sum_{|y| \leq kM}\hspace{-0.3em}  \Big| \frac{1}{2\ell + 1}\hspace{-0.3em} \sum_{|y^\prime - y| \leq \ell}\hspace{-0.3em}\big|\omega_{y^\prime}(t)- \zeta_{y^\prime}(t)\big|  - \big|\omega_y^\ell (t) - c\big|\; \Big| dt\bigg] = 0,
\end{equation}
and
\begin{multline}\label{entIne5}
	\lim_{\ell \rightarrow \infty} \limsup_{M \rightarrow \infty} \mathbf{E}_{\tilde{\mu}_M}  \bigg[ \int_0^T \frac{1}{2kM+1} \sum_{|y| \leq kM}  \Big| \frac{1}{2\ell + 1} \sum_{|y^\prime - y| \leq \ell} \big|\mathbbm{1}_{\{\omega_{y^\prime}(t) \geq 2\}} - \mathbbm{1}_{\{\zeta_{y^\prime}(t) \geq 2\}}\big| \\
- \big|\mc{G}(\omega_y^\ell (t)) - \mc{G}(c)\big|\; \Big| dt\bigg] = 0.
\end{multline}}
We only prove \eqref{entIne5}, \eqref{entIne4} can be handled in the same way. Let $\tilde{S}_t$ be the semigroup associated to $\tgenzr$ and let
\[\bar{\mu}^M_T = \frac{1}{TM} \int_0^{TM}  \frac{1}{2kM+1} \sum_{|y| \leq kM} \tau_y \tilde{\mu}_M \tilde{S}_t dt.\]
Then we can rewrite \eqref{entIne5} as
\begin{equation}\label{entIne6}
	\lim_{\ell \rightarrow \infty} \limsup_{M \rightarrow \infty} \int \Big| \frac{1}{2\ell+1}\sum_{|y^\prime - y| \leq \ell} \big|\mathbbm{1}_{\{\omega_{y^\prime} \geq 2\}} - \mathbbm{1}_{\{\zeta_{y^\prime} \geq 2\}}\big| 
	- \big|\mc{G}(\omega_y^\ell) - \mc{G}(c)\big|\Big| \bar{\mu}_T^M (d\omega,d\zeta)= 0.
\end{equation}
Recall $\bar{\alpha} = \sup \alpha^{\rm ini} (v)+1$. Observe that the first marginal of $\bar{\mu}^M_T$ is stochastically bounded by $\mu_{\bar \alpha}^\star$, and the second marginal is $\mu_c^\star$.  Therefore, the sequence of the measures $\{\bar{\mu}^M_T,\,M \geq 1\}$ is tight. Denote by $\mathscr{A}$ the set of all possible limit points.  Then we have that
\begin{enumerate}
	\item each marginal of any $\tilde{\mu} \in \mathscr{A}$ is stochastically bounded by $\mu_{\bar{\alpha}+c}^\star,$
	\item $\mathscr{A} \subset \tilde{\mathscr{I}} \cup \tilde{\mathscr{S}}$ (where both sets were defined just before Lemma \ref{lem:ergodic}),
	\item the second marginal of any $\tilde{\mu} \in \mathscr{A}$ is $\mu_c^\star$.
\end{enumerate}
It therefore suffices to prove
\begin{equation}\label{entIne7}
	\lim_{\ell \rightarrow \infty} \sup_{\tilde{\mu} \in \mathscr{A}} \int \Big| \frac{1}{2\ell+1}\sum_{|y^\prime - y| \leq \ell} \big|\mathbbm{1}_{\{\omega_{y^\prime} \geq 2\}} - \mathbbm{1}_{\{\zeta_{y^\prime} \geq 2\}}\big| 
	- \big|\mc{G}(\omega_y^\ell) - \mc{G}(c)\big|\Big| \tilde{\mu} (d\omega,d\zeta)= 0.
\end{equation}
By Lemma \ref{lem:ergodic}, we need to consider two cases: either $\tilde{\mu}$ is supported on $\{0,1\}^\Z \times \{0,1\}^\Z$ or  $\tilde{\mu}$ satisfies
\[
	\tilde{\mu} (\omega \leq \zeta \;\text{or }\; \zeta \leq  \omega ) = 1.
\]
The second case follows exactly from the strategy of \cite[Theorem 3.1]{rezakhanlou91}. The main idea is to replace the average of absolute value in \eqref{entIne7} by the absolute value of average, and then to use the law of large numbers. For the first case we must have $c \leq 1$. In particular, every term inside the integral is equal to zero. This proves \eqref{entIne7} and concludes the proof of the lemma.
\end{proof}

\subsubsection{Initial condition} In this subsection, we assume that the initial distribution is a product measure on $\Gamma_M$ with marginals given by \eqref{eq:muM}. We now prove a microscopic version of the initial condition  for the asymmetric  zero-range process. 

\begin{lemma}\label{lem:zrpinitial}
For every $A > 0$,
\begin{equation}\label{eqn:initial_value}
	\lim_{t \rightarrow 0} \limsup_{\ell \rightarrow \infty} \limsup_{M \rightarrow \infty} \mathbf{E}_{\mu_M}  \bigg[\frac{1}{M} \sum_{|y| \leq AM} \big|\omega_y^\ell (t) - \alpha^{\rm ini}(\tfrac{y}{M})\big| \bigg] = 0.
\end{equation}
\end{lemma}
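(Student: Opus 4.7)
The plan is to approximate $\alpha^{\rm ini}$ by a step function, exploit the basic coupling of the zero-range process through an $L^1$-contraction property to reduce to the step profile, and then handle each constant-density piece via a further coupling with a translation-invariant pseudo-equilibrium process. Fix $\delta > 0$. Using the Riemann integrability and boundedness of $\alpha^{\rm ini}$, I would first choose a step function $\bar\alpha = \sum_{k=1}^K c_k \mathbbm{1}_{I_k}$ on disjoint intervals $I_k$ of length $\delta$ covering $[-A-1,A+1]$, extended to equal $\alpha^{\rm ini}$ outside, such that $\|\alpha^{\rm ini} - \bar\alpha\|_{L^1(\R)} < \delta$. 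Let $\bar\omega(t)$ denote the zero-range process started from the associated product measure $\bar\mu_M$ obtained from \eqref{eq:muM} with $\bar\alpha$ in place of $\alpha^{\rm ini}$.

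The key first ingredient is an $L^1$-contraction estimate. I would couple $\omega(0)$ with $\bar\omega(0)$ by independent CDF inversions at each site using common uniform random variables, so that by stochastic ordering of geometric distributions
\[\mathbf{E}\big[|\omega_y(0) - \bar\omega_y(0)|\big] = |\alpha^{\rm ini}(y/M) - \bar\alpha(y/M)|.\]
Propagating this coupling via the basic coupling of Section \ref{sec:coupling}, a jump-by-jump case analysis shows that every attempted jump either leaves $\sum_y|\omega_y - \bar\omega_y|$ unchanged or decreases it by $2$, so this sum is almost surely non-increasing in time. Taking expectations yields
\[\sum_{y \in \Z} \mathbf{E}\big[|\omega_y(t) - \bar\omega_y(t)|\big] \leq \sum_y |\alpha^{\rm ini}(y/M) - \bar\alpha(y/M)| \leq C M \delta,\]
and since block-averaging cannot increase the $L^1$ norm,
\[\frac{1}{M}\sum_{|y|\leq AM}\mathbf{E}\big[|\omega_y^\ell(t)-\bar\omega_y^\ell(t)|\big] \leq C\delta,\]
uniformly in $t$ and $\ell$.

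By the triangle inequality it remains to estimate $\frac{1}{M}\sum_{|y|\leq AM}\mathbf{E}[|\bar\omega_y^\ell(t)-\bar\alpha(y/M)|]$, which I would split by intervals $I_k$. On each $I_k$, I would compare $\bar\omega$ with a translation-invariant process $\tilde\omega^{(k)}$ started from the pseudo-equilibrium $\mu_{c_k}$ on all of $\Z$, using identical Poisson clocks, with initial values agreeing on $M I_k$ and coupled independently elsewhere. Using that individual particle displacements are stochastically dominated by Poisson processes of rate $1$ (the same finite-speed-of-propagation argument underlying \eqref{eq:vM}), for $y$ at macroscopic distance $\geq C_0 t$ from $\partial I_k$ the two processes coincide at $y$ throughout $[0,t]$ with probability tending to $1$. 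The boundary layers, of total macroscopic measure $O(t/\delta)$, contribute $O(t/\delta)\to 0$ as $t\to 0$ for fixed $\delta$. It then remains to show, for each $c_k$,
\[\limsup_{t \to 0}\limsup_{\ell \to \infty}\limsup_{M \to \infty}\mathbf{E}_{\mu_{c_k}}\big[|\tilde\omega_0^{(k),\ell}(t)-c_k|\big] = 0.\]
At $t=0$ this is the LLN for the product measure $\mu_{c_k}$, which gives $\mathrm{Var}(\tilde\omega_0^{(k),\ell}(0)) = O(1/\ell)$. For small $t > 0$ I would combine the stochastic domination from Lemma \ref{lem:upperbound}, the time-integrated two-blocks estimate \eqref{eq:TBE3} of Lemma \ref{lem:TBE2} in the supercritical case $c_k > 1$, and a direct covariance argument exploiting the locality of the dynamics to control $\mathrm{Var}(\tilde\omega_0^{(k),\ell}(t))$. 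Putting everything together, the triple limit in \eqref{eqn:initial_value} is bounded by $C\delta$; letting $\delta\to 0$ concludes the proof.

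The hard part will be the pointwise-in-$t$ control of the pseudo-equilibrium process at small $t > 0$, since Lemma \ref{lem:TBE2} provides only a time-integrated bound. In the supercritical regime $c_k > 1$ this would be handled via the attractive coupling with the genuine equilibrium $\mu_{c_k}^\star$ after the transient $t_M\to 0$ used in the proof of Lemma \ref{lem:TBE2}, together with the decorrelation inherent to $\mu_{c_k}^\star$. In the subcritical regime $c_k \leq 1$ one would instead use that the process is essentially frozen, with active sites $\{\omega_y\geq 2\}$ initially sparse enough that the local density cannot evolve significantly over the short time $t$, so that $\tilde\omega_0^{(k),\ell}(t)$ stays close to its time-$0$ value.
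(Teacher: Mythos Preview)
Your reduction to the constant-density case is sound and parallels the paper's: the $L^1$-contraction under the basic coupling, the step-function approximation, and the finite-speed localisation are all correct, and indeed the paper (following \cite[Lemma 5.6]{rezakhanlou91}) reduces the statement to exactly the flat-profile claim
\[
\limsup_{\ell\to\infty}\limsup_{M\to\infty}\mathbf{E}_{\mu_c}\big[|\omega_0^\ell(t)-c|\big]=0.
\]

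The genuine gap is in how you handle this flat-profile claim. Note the order of limits: $M\to\infty$ is taken \emph{first}, with $t>0$ fixed. In the asymmetric setting the process carries generator $M\genzr$, so fixed macroscopic $t$ means microscopic time $Mt\to\infty$. Thus none of your ``small $t$'' heuristics are available: the process has evolved for an unbounded amount of time before $\ell$ or $t$ are touched. In particular, (a) the subcritical claim that the dynamics is ``essentially frozen'' fails, since under $\mu_c$ with $c<1$ a positive density of sites initially satisfy $\omega_y\geq 2$, and over time $Mt\to\infty$ the configuration changes macroscopically; (b) Lemma \ref{lem:TBE2} cannot be invoked, as it is proved for the symmetric process on the torus via the transience-to-ergodic-component bound of \cite{blondel2020hydrodynamic} and entropy estimates, neither of which are available on $\Z$ in the hyperbolic scaling; (c) the coupling with $\mu_c^\star$ ``after the transient'' is ill-defined on $\Z$, where the process never reaches the ergodic component globally.

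The paper instead proves the flat-profile claim for \emph{every} $t>0$ (so the outer $\lim_{t\to 0}$ is not even needed). For $c>1$ this rests on a separate convergence-to-equilibrium result, Proposition \ref{pro:superlimit}: $\mu_c S(s)\to\mu_c^\star$ weakly as $s\to\infty$, proved by adapting Sepp\"al\"ainen's discrepancy-annihilation argument to the degenerate rate $g(k)=\mathbbm{1}_{\{k\geq 2\}}$. For $c<1$ the paper uses the structural fact specific to the FZRP that $\mathbbm{1}_{\{\omega_y(t)=0\}}$ is non-increasing in $t$ (a lone particle never jumps), so sites that are empty at all times have positive density when $c<1$, and these trap the particle content of blocks for all $t$. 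These are the ideas your proposal is missing.
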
 

The proof of \cite[Lemma 5.6]{rezakhanlou91} cannot be  adapted to our case directly because the initial distribution is not a local equilibrium state, so it is impossible to couple the original process with the stationary process. To overcome this difficulty, we couple the process with the one with initial value $\mu_\alpha$ instead of $\mu_\alpha^\star$.  Following the proof of  \cite[Lemma 5.6]{rezakhanlou91} step by step, it is enough to prove for every $A > 0$ and for every $t > 0$,
\[\limsup_{\ell \rightarrow \infty}\,\limsup_{M \rightarrow \infty}\, \frac{1}{M} \sum_{|y| \leq AM} \mathbf{E}_{\mu_{\alpha^{\rm ini}(y/M)}} \Big[ \big|\omega^{\ell}_0 (t)- \alpha^{\rm ini}(\tfrac{y}{M})\big|\Big] = 0.\]
By attractiveness, the mapping  \[\alpha \mapsto \mathbf{E}_{\mu_{\alpha}} \Big[ \big|\omega^{\ell}_0 (t)- \alpha \big|\Big]\]
is uniformly continuous in $\ell$ and $M$. By the dominated convergence Theorem, we only need to prove  that for every $\alpha > 0$ and for every $t > 0$,
\begin{equation}\label{eqn:flat_profile}
\limsup_{\ell \rightarrow \infty}\,\limsup_{M \rightarrow \infty}\, \mathbf{E}_{\mu_\alpha} \big[|\omega^\ell_0 (t) - \alpha|\big] = 0.
\end{equation}
We first prove \eqref{eqn:flat_profile} for $\alpha > 1$.  Let $S(t)$ be the semigroup associated to the infinitesimal generator $\genzr$.  Recall that $\omega (t)$ is the rescaled zero-range process with generator $M \genzr$. In the supercritical case, it is easy to see that \eqref{eqn:flat_profile} is a direct consequence of the following result, which has its own interest. Indeed, we could rewrite the expectation in \eqref{eqn:flat_profile} as $E_{\mu_{\alpha} S(Mt)} [ |\omega_0^\ell - \alpha|]$. Then the result follows from law of large numbers.

\begin{proposition}\label{pro:superlimit} If $\alpha > 1$, then
	\[\lim_{t \rightarrow \infty} \mu_\alpha S (t) = \mu_\alpha^\star.\]
\end{proposition}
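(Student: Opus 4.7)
The plan is to identify any subsequential Cesàro limit of $\{\mu_\alpha S(t)\}_{t \geq 0}$ as $\mu_\alpha^\star$, and then upgrade this to convergence of $\mu_\alpha S(t)$ itself.

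First I would establish tightness by exhibiting the stochastic domination $\mu_\alpha \preceq \mu^\star_{\alpha+1}$, which is immediate from a direct comparison of the tail probabilities $\mu_\alpha(\omega \geq k) = (\alpha/(1+\alpha))^k$ and $\mu^\star_{\alpha+1}(\omega \geq k) = (\alpha/(\alpha+1))^{k-1}$ for $k \geq 1$. Attractiveness (basic coupling from Section~\ref{sec:coupling}) preserves this order under the dynamics, so $\mu_\alpha S(t) \preceq \mu^\star_{\alpha+1}$ for every $t \geq 0$, granting both tightness and uniform integrability of $\omega_0$. Extracting then a subsequential weak limit $\tilde{\mu}$ from the Cesàro average $\bar{\mu}_T := T^{-1}\int_0^T \mu_\alpha S(t)\,dt$, the standard argument $\bar{\mu}_T S(s) - \bar{\mu}_T \to 0$ shows that $\tilde{\mu}$ is stationary for $S(t)$, while translation-invariance is inherited from $\mu_\alpha$ together with commutation of $S(t)$ with spatial shifts. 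Lemma~\ref{lem:ergodic}\,(i) then yields
\[
\tilde{\mu} = \lambda\, \mu_{\mathcal F} + (1-\lambda)\int_{[1,\infty)} \mu^\star_\gamma\, \beta(d\gamma),
\]
with $\mu_{\mathcal F}$ supported on frozen configurations and $\beta$ a probability on $[1,\infty)$. Conservation of density, propagated to the limit by uniform integrability, gives the constraint $\lambda\,\mathbf{E}_{\mu_{\mathcal F}}[\omega_0] + (1-\lambda) \int \gamma\,\beta(d\gamma) = \alpha$.

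The main obstacle is forcing $\lambda = 0$ and $\beta = \delta_\alpha$. The key input is the absorbing nature of $\{\omega_y \geq 1\}$: facilitation forbids a lone particle from jumping, so any site that ever becomes occupied remains so for all subsequent times. Hence $t \mapsto \mathbf{P}_{\mu_\alpha S(t)}(\omega_y \geq 1)$ is non-decreasing, its limit $q$ equals $\tilde{\mu}(\omega_y \geq 1)$, and so $\lambda\, \mu_{\mathcal F}(\omega_0 = 0) = 1 - q$. To pin down the limit I would exploit the microscopic bijection $\omega_y \mapsto \omega_y - 1$ on the absorbing ergodic set $\mathcal E := \{\omega_y \geq 1 \;\forall y\}$, under which the facilitated zero-range becomes a \emph{classical} constant-rate zero-range process whose translation-invariant equilibria are precisely $\{\mu_{\gamma-1}\}_{\gamma \geq 1}$ (in bijection with $\{\mu^\star_\gamma\}_{\gamma \geq 1}$ on the facilitated side); the classical Andjel-type convergence theorem for constant-rate zero-range then identifies the ergodic part of $\tilde{\mu}$ with $\mu^\star_\alpha$. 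Ruling out the frozen component amounts to showing that the density of empty sites vanishes as $t \to \infty$, which in infinite volume is the hardest step: I would derive it from a finite-box transience-time estimate in the spirit of \cite[Proposition~4.1]{blondel2020hydrodynamic}, using crucially that the supercritical density $\alpha > 1$ provides a local surplus of mass that fills isolated holes faster than particles can leave the box.

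Since every subsequential Cesàro limit coincides with $\mu^\star_\alpha$, the whole Cesàro average converges to $\mu^\star_\alpha$. Convergence of $\mu_\alpha S(t)$ itself then follows from the monotonicity of $t \mapsto \mathbf{P}_{\mu_\alpha S(t)}(\omega_y = 0)$ together with the uniqueness of the decomposition above: any other subsequential limit of $\mu_\alpha S(t)$ would give rise, after Cesàro averaging, to $\mu^\star_\alpha$, forcing the limit to be $\mu^\star_\alpha$ as well.
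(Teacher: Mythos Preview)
Your outline has a genuine gap at the step where you force $\beta=\delta_\alpha$. Conservation of density only gives $\int\gamma\,\beta(d\gamma)=\alpha$ (once $\lambda=0$), not that $\beta$ is a Dirac mass; to rule out a nontrivial mixture you need an additional argument. Your proposed fix via the bijection $\omega_y\mapsto\omega_y-1$ is circular: the bijection only intertwines the dynamics on the ergodic set $\mathcal E=\{\omega_y\geq 1\ \forall y\}$, but $\mu_\alpha$ is not supported there, so you cannot invoke a convergence theorem for the classical constant-rate zero-range until you have already shown the process reaches $\mathcal E$ --- which is precisely the ``hardest step'' you defer. Moreover, ``Andjel-type convergence'' is not a statement you can cite off the shelf: Andjel's results classify invariant measures, they do not give $L^1$ convergence to equilibrium from a general translation-invariant initial law. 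Finally, your upgrade from Ces\`aro convergence to convergence of $\mu_\alpha S(t)$ does not follow: a subsequential limit of $\mu_\alpha S(t)$ need not be stationary, so Lemma~\ref{lem:ergodic}(i) does not apply to it, and the sentence ``any other subsequential limit would give rise, after Ces\`aro averaging, to $\mu^\star_\alpha$'' is not a valid deduction.

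The paper bypasses all of this by a direct coupling argument in the spirit of Sepp\"al\"ainen: one couples $\omega$ with an \emph{equilibrium} process $\zeta\sim\mu^\star_{\alpha\pm\varepsilon}$ via the basic coupling and shows that $\mathbb E[(\omega_0(t)-\zeta_0(t))^\mp]\to 0$, which sandwiches every limit point of $\mu_\alpha S(t)$ between $\mu^\star_{\alpha-\varepsilon}$ and $\mu^\star_{\alpha+\varepsilon}$. The vanishing of negative discrepancies is obtained by contradiction through a discrepancy-annihilation argument: if discrepancies of both signs persisted with positive density, one constructs an event of positive probability on which two opposite discrepancies meet and cancel in a fixed box, forcing $\mathbb E[|\omega_0(t)-\zeta_0(t)|]$ to decrease by a definite amount per unit time, which is impossible. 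The only place where the facilitated structure enters specifically is in proving $\lim_{t\to\infty}\mathbf P_{\mu_\alpha}(\omega_0(t)=0)=0$, which the paper gets from your monotonicity observation together with the fact that, since $\alpha>1$, no infinite set of permanently empty sites can survive by mass balance. This route gives convergence of $\mu_\alpha S(t)$ directly, with no Ces\`aro averaging and no need to pin down a mixing measure.
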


\begin{proof}[Proof of Proposition \ref{pro:superlimit}]
We follow the ergodicity argument of \cite[Theorem 7.1]{seppalainen2008translation}, where a similar result is proved for asymmetric $K$-exclusion process in one dimension.   Only throughout the proof of Proposition \ref{pro:superlimit} and without confusion, we use $\omega (t)$ to denote the original zero-range process with \emph{unaccelerated}  generator $\genzr$.

Let $(\omega (t),\zeta (t))$ be the coupled process evolving according to the generator $\tgenzr$(see section \ref{sec:coupling}) and with initial distribution $\tilde{\mu}_{\alpha,\varepsilon}:=\mu_\alpha \otimes \mu_{\alpha-\varepsilon}^\star$,  where $\varepsilon > 0$ is fixed and small enough such that $\alpha - \varepsilon > 1$.  Denote by $\tilde{\mu}_t$ the distribution of the coupled process at time $t$.  To finish the proof, it is enough to show that
\begin{equation}\label{neg_discre_vanish}
	\lim_{t \rightarrow \infty} \E_{\tilde{\mu}_t} \big[(\omega_y - \zeta_y)^-\big] = 0, \quad \text{for any } y \in \Z.
\end{equation}
Indeed, this implies that any limit point of $\tilde{\mu}_t$ is concentrated on the set of configurations $\{(\omega,\zeta): \omega \geq \zeta\}$. As a consequence, any limit point of $\mu_\alpha S(t)$ as $t \rightarrow \infty$ is bounded below by $\mu_{\alpha-\varepsilon}^\star$. Since $\varepsilon$ is arbitrarily small, any limit point of $\mu_\alpha S(t)$ is stochastically bounded below by $\mu_{\alpha}^\star$. By taking the initial distribution of the coupled process $(\omega(t),\zeta(t))$ as $\mu_\alpha \otimes \mu_{\alpha+\varepsilon}^\star$,  $\varepsilon > 0$ and using a similar argument as above, we prove that any limit point of $\mu_\alpha S(t)$ is stochastically bounded above by $\mu_{\alpha}^\star$. This is enough to prove Proposition  \ref{pro:superlimit}.

We now prove \eqref{neg_discre_vanish}.  Since the initial measure of the coupled process is spatially translation invariant and ergodic, and the process could be constructed by using a family of independent Poisson processes,  $\tilde{\mu}_t$ is also spatially translation invariant and ergodic for every $t \geq 0$, \emph{cf.\,}\cite[Lemma 7.4]{seppalainen2008translation} for details. Moreover, it is easy to see that $\mathbb{E}_{\tilde{\mu}_t} [\omega_0]$ and $\mathbb{E}_{\tilde{\mu}_t} [\zeta_0]$ are constant in time \lj{by translation invariance of the initial distribution and of the dynamics}, and that $\mathbb{E}_{\tilde{\mu}_t} [(\omega_0 - \zeta_0)^\pm]$ and $\mathbb{E}_{\tilde{\mu}_t} [|\omega_0 - \zeta_0|]$ are non-increasing in time.

Assume that \eqref{neg_discre_vanish} does not hold. We must then have for some $\delta>0$, and for any $t\geq 0$
\[{\bb E}_{\tilde{\mu}_t} [(\omega_0 - \zeta_0)^-] \geq \delta, \quad  \mbox{ and }\quad  {\bb E}_{\tilde{\mu}_t} [(\omega_0 - \zeta_0)^+]= \varepsilon +{\bb E}_{\tilde{\mu}_t} [(\omega_0 - \zeta_0)^-] \geq \delta.\]
We claim that $\tilde{\mu}_t (\omega_0 - \zeta_0>0)$ and $\tilde{\mu}_t (\omega_0 - \zeta_0<0)$ are both larger than some $\delta'>0$ for any time $t\geq 0$.  Indeed, since $(\omega_0 - \zeta_0)^+$ is nonnegative and integer-valued, by Cauchy-Schwarz inequality and attractiveness,
\[\tilde{\mu}_t (\omega_0 - \zeta_0>0) = \tilde{\mu}_t ((\omega_0 - \zeta_0)^+ \geq 1) \geq \frac{{\bb E}_{\tilde{\mu}_t} [(\omega_0 - \zeta_0)^+]}{\sqrt{{\bb E}_{\tilde{\mu}_t} \big[\big((\omega_0 - \zeta_0)^+\big)^2\big]}} \geq C(\alpha) \delta =: \delta^\prime.\]
The same argument works for $\tilde{\mu}_t (\omega_0 - \zeta_0<0)$ and this proves the claim. In particular, by ergodicity, any limit point of $\mu_\alpha S(t)$ is concentrated on configurations with densities of -both positive and negative- discrepancies which are larger than $\delta'$ (see \cite[Proposition 7.8]{seppalainen2008translation} for a more detailed argument).
Let $I$ be a finite interval, and $B(I)$ the set of configurations such that $I$ contains discrepancies of opposite sign, namely 
\[B(I)=\{(\omega, \zeta) \; ;\; \exists y, y'\in I \mbox{ such that }  \omega_y - \zeta_y>0\mbox{ and }\omega_{y'} - \zeta_{y'}<0 \}. \]
If a limit point of $\mu_\alpha S(t)$ is concentrated on configurations with both positive and negative discrepancy densities, we have 
\begin{equation}
\label{eq:BI}
\lim_{\ell\to\infty}\lim_{t \rightarrow \infty} \tilde{\mu}_t (B(\{-\ell,\dots,\ell\}))=1.
\end{equation}

We now claim that for any finite interval $I$,
\begin{equation}
\label{eqn:mu_tB}
	\lim_{t \rightarrow \infty} \tilde{\mu}_t (B(I)) = 0,
\end{equation}
which disproves \eqref{eq:BI} and proves \eqref{neg_discre_vanish}.

%
%
It remains to prove \eqref{eqn:mu_tB}, for which Sepp{\" a}l{\" a}inen's argument needs to be refined, because of the degeneracy of the dynamics. By translation invariance, we may take $I = \{0,1,\ldots,m-1\}$ for some positive integer $m$. Fix $T > 0$ and $K > 0$.  For any time $t > 0$, let $A_t:=A_t(I)$ be the following event: \ccl{during the time interval $[t,t+T)$, there are no jumps involving the sites in the interval $I$ except the following jumps: first, there are $q:=m(K-1)$ jumps from site $0$ to $1$, then $q$ jumps from $1$ to $2$, $\ldots$ up to $q$ jumps from $m-2$ to $m-1$. Note that
	\[p_{K,T} := \mathbf{P}_{\tilde{\mu}_{\alpha,\varepsilon}} (A_t(I)) > 0,\]
	and also that $A_t(I)$ is independent of the initial distribution $\tilde{\mu}_{\alpha,\varepsilon}$ of the two configurations since it  depends only on the Poisson jumps in the time interval $[t,t+T)$. Assume that the configuration at time $t$ has $1\leq \omega_y(t)\leq K$ particles on each site in $I$ (so that it has at most $q$ particles that can move in $I$), then on the event $A_t(I)$, all the particles that were in $I$ at time $t$ end up on site $m-1$ at time $T+t$, except one particle per site that remained stuck. In particular, as illustrated in Figure \ref{fig:At}, if two configurations are driven by the basic coupling and both satisfy  $1\leq \omega_y(t), \zeta_y(t)\leq K$  in $I$, then at time $t+T$, only remains at most one type of discrepancy, depending on which configurations initially had more particles in $I$.}
	
\begin{figure}
\includegraphics[width=10cm]{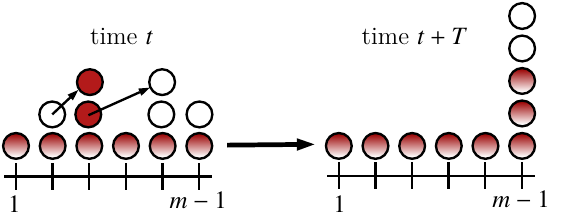}
\caption{\ccl{Representation of the event $A(t)$ for two configurations $\omega$ (white) and $\zeta$ (red), under the basic coupling. The particles with the red-white gradient are the coupled ones, the arrows represent couplings that occur before time $t+T$.}}
\label{fig:At}
\end{figure}

	Let $z_\ell \uparrow \infty$ be an increasing sequence of points such that there are no interactions of the system in the interval $\{-z_\ell,\dots, z_\ell\}$  with the outside $\Z\setminus \{-z_\ell,\dots, z_\ell\}$ during time interval $[t,t+T)$. 
	Denote by $B_t (I)$ the event that  there exist discrepancies of opposite sign at time $t$ in the interval $I$. For any $j\in \Z$, define $I_{j}=I+jm$. If  $1 \leq \omega_y (t),\,\zeta_y (t) \leq K$ for every $y \in I_j$, and the events $B_t (I_j)$ and $A_t (I_j)$ happen, then at least two discrepancies are annihilated in the interval $I_j$ during the time interval $[t,t+T)$, \ccl{as illustrated in Figure \ref{fig:At}}. In particular, \lj{defining $j_\ell:=\max\{j\in \N\; ;\; j m<z_\ell\}$,}
	\begin{multline*}
		\frac{1}{z_\ell} \sum_{y=-z_\ell}^{z_\ell} |\omega_y (t+T)- \zeta_y (t+T)| \leq  \frac{1}{z_\ell} \sum_{y=-z_\ell}^{z_\ell} |\omega_y (t)- \zeta_y (t)|   \\
		-  \frac{2}{z_\ell} \sum_{j=-j_\ell}^{j_\ell} \mathbbm{1} \{A_t (I_j)\}
		\times  \mathbbm{1} \big\{\{1 \leq \omega_y(t),\,\zeta_y (t) \leq K \;  \text{for every}\; y \in I_j\} \cap B_t (I_j) \big\}.
	\end{multline*}
	We remark that the two indicator functions above are independent, since the first one only depends on the Poisson clocks between times $t$ and $t+T$, whereas the second one only depends on the configuration at time $t$. Letting $z_\ell \uparrow \infty$, by ergodicity, we obtain 
	
	\vspace{-0.5cm}
	
	\begin{multline*}
	\mathbf{E}_{\tilde{\mu}_{\alpha,\varepsilon}} [ |\omega_0 (t+T)- \zeta_0 (t+T)|]\\
	 \leq \mathbf{E}_{\tilde{\mu}_{\alpha,\varepsilon}} [ |\omega_0 (t)- \zeta_0 (t)|] - 2 m^{-1} p_{K,T} \mathbf{P}_{\tilde{\mu}_{\alpha,\varepsilon}} \big(B_t (I) \cap \{1 \leq \omega_y (t),\zeta_y(t) \leq K, \;  \forall \; y \in I\}\big).
	 \end{multline*}
	Since $\mathbf{E}_{\tilde{\mu}_{\alpha,\varepsilon}}[ |\omega_0 (t)- \zeta_0 (t)|] $ only decreases in time and remains non-negative, we must have
	\[\limsup_{t \rightarrow \infty} \mathbf{P}_{\tilde{\mu}_{\alpha,\varepsilon}} \big(B_t (I) \cap \{1 \leq  \omega_y (t),\zeta_y(t) \leq K \;  \text{for every}\; y\in I\}\big) = 0.\]
	Therefore,
	\begin{align*}
		\limsup_{t \rightarrow \infty} \tilde{\mu}_t (B) &\leq \limsup_{t \rightarrow \infty}  \sum_{y \in I} \left\{ \mathbf{P}_{\tilde{\mu}_{\alpha,\varepsilon}} (\omega_y (t)= 0) + \mathbf{P}_{\tilde{\mu}_{\alpha,\varepsilon}} (\omega_y (t) > K) + \mathbf{P}_{\tilde{\mu}_{\alpha,\varepsilon}} (\zeta_y (t) > K) \right\}\\
&		\leq \limsup_{t \rightarrow \infty}  \sum_{y \in I}  \mathbf{P}_{\mu_\alpha} (\omega_y (t) = 0) + \frac{2 m \alpha}{K}.
	\end{align*}
	Since $K$ is arbitrary, and by translation invariance, it suffices to prove that 
	\begin{equation}
	\label{eqn:no_holes}
	\lim_{t \rightarrow \infty}  \mathbf{P}_{\mu_\alpha} (\omega_0 (t) = 0) = 0.
	\end{equation}
	Since $\mathbbm{1} \{\omega_y (t)=0\}$ is non-negative and decreasing  in time $t$, it has a limit denoted by $a_y$ as $t \rightarrow \infty$.  Furthermore, a.s. there cannot exist $y< y'\in \Z$ such that $a_y=a_y'=1$ and 
	\begin{equation}\sum_{z=y+1}^{y'-1}\omega_y(0)> y'-y-1.\end{equation}
Indeed, if the latter holds, then at some point a particle must have left the interval $\{y,\dots,y'\}$ and therefore we cannot have $a_y=a_y'=1$. Since the density $ \alpha>1$ is larger than one, by ergodicity a.s. there cannot exist an infinite sequence $(y_k)$ of sites such that $a_{y_k}=1$ $\forall k\in \N$. By ergodicity we must have in particular that $\mathbf{P}_{\mu_\alpha} (a_0 = 0)=1$.
This yields	
\[\mathbf{E}_{\mu_\alpha} [a_0] = \lim_{t \rightarrow \infty}  \mathbf{P}_{\mu_\alpha} (\omega_0 (t) = 0) = 0\]
which proves \eqref{eqn:no_holes} and concludes the proof.\end{proof}


We now prove \eqref{eqn:flat_profile} for $\alpha < 1$. By ergodicity, 
	\[\mathbf{P}_{\mu_\alpha} (\omega_0 (t) = 0) = \lim_{n \rightarrow \infty} \frac{1}{n} \sum_{y=1}^n \mathbbm{1} \{\omega_y (t)= 0\} \geq \lim_{n \rightarrow \infty} \frac{1}{n} \sum_{x=1}^n  (1-\omega_y (t)) = 1 - \alpha > 0, \]
	$ \mathbf{P}_{\mu_\alpha}- \text{a.s.}$ Recall that $a_y$ is the limit of $\mathbbm{1} \{\omega_y (t) = 0\}$ as $t \rightarrow \infty$, which satisfies
	\[\mathbf{E}_{\mu_\alpha} [a_0] \geq 1 - \alpha.\]
	Let $A_\ell$ be the event that there exist points $y \in \{-\ell - \sqrt{\ell},\dots, -\ell\}$ and $y^\prime \in  \{\ell ,\dots, \ell + \sqrt{\ell}\}$ such that $a_y= a_{y^\prime} = 1$. Note that on the event $A_\ell$, the total number of particles in the interval $\{y,\dots,y^\prime\}$ remains constant in time, therefore 
	\[\omega_0^\ell (t) = \omega_0^\ell (0) + \mathcal{O} (\ell^{-1/2}).\]
	Moreover,  following the same argument as in the case $\alpha>1$, since $\alpha<1$ we must have by ergodicity $\mathbf{P}_{\mu_\alpha} (A_\ell^c) \rightarrow 0$ as $\ell \rightarrow \infty$. Therefore, for any $\varepsilon > 0$, 
	\begin{multline*}
	\mathbf{P}_{\mu_\alpha} \big(|\omega_0^\ell (t) - \alpha| > \varepsilon\big)\\
	 \leq \mathbf{P}_{\mu_\alpha} \big(|\omega_0^\ell (t)- \omega_0^\ell (0) | >  \varepsilon/2, A_\ell\big) + \mathbf{P}_{\mu_\alpha} \big(|\omega_0^\ell (0) - \alpha| > \varepsilon/2\big) + \mathbf{P}_{\mu_\alpha} (A_\ell^c).
	 \end{multline*}
	This proves convergence in probability. By attractiveness, it is easy to see that $|\omega_0^\ell (t) - \rho| $ is uniformly integrable in $t$ and $\ell$, therefore $L^1$-convergence follows.
	
	For $\alpha = 1$, denote by $\omega^\alpha (t)$ the process with initial distribution $\mu_\alpha$. For any $\varepsilon > 0$, by attractiveness,
	\[\mathbf{P} (|\omega^{1,\ell}_0 (t) - 1| > \varepsilon) \leq \mathbf{P} (\omega^{1+\varepsilon/2,\ell}_0 (t)> 1+\varepsilon) + \mathbf{P} (\omega^{1-\varepsilon/2,\ell}_0 (t)< 1-\varepsilon).\]
	The above two probabilities on the right-hand side converge to zero as we have proved. By uniform integrability, we conclude the proof.

\subsubsection{Conclusion in the asymmetric case.} We now conclude the proof for the asymmetric case.  The steps are quite standard and we refer the readers to \cite{rezakhanlou91} or \cite[Chapter 8]{klscaling} for details of the proof. The main idea is to introduce the notions of measure-valued entropy solutions to \eqref{zrp:Hydro} and of Young measures, \emph{cf.\,}\cite{diperna1985measure} and \cite[Chapter 8]{klscaling} for such notions. Let $\mc{P} (\R_+)$ be the set of positive Radon measures on $\R_+$. A measurable map $\mu: (0,\infty) \times \R \rightarrow \mc{P} (\R_+)$ is said to be a measure-valued entropy solution to \eqref{zrp:Hydro} if 
\begin{enumerate}
	\item for any non-negative  test function  $\varphi\in C^{1,1}(\R_+\times\R)$ with compact support in $(0,\infty) \times \bb{R}$,  for any $c \geq 0$,
	\[	\int_0^\infty \,\int_{\bb{R}}\, \Big\{ \< \mu(t,v),\big|\alpha - c\big|\> \partial_t \varphi_t(v) + (2p-1)\<\mu(t,v), \mathfrak{q}\big(\alpha;c\big)\> \partial_v \varphi_t(v) \Big\}\,dv\,dt \geq 0,\]
	recall $\mathfrak{q}(\alpha;c)=\mathrm{sign}(\alpha-c)\big(\mathcal{G}(\alpha)-\mathcal{G}(c)\big)$;
	\item and for any $A > 0$,
	\[	\lim_{t \rightarrow 0}\, \int_{-A}^A\, \<\mu(t,v), |\alpha - \alpha^{\mathrm{ini}} (v)|\>\,dv = 0.\]
\end{enumerate}
Above, for a measurable function $f: \R_+ \rightarrow \R$,
\[\<\mu(t,v),f\> = \int_{\R_+} f(\alpha) \mu(t,v) (d\alpha).\]
For positive integers $M$ and $\ell$ and for each $t \geq 0$, define the {\it Young measure} $\pi_t^{M,\ell} (dv,d\alpha)$ as 
\[\pi_t^{M,\ell} (dv,d\alpha) = \frac{1}{M}\sum_{y \in \Z} \delta_{y/M} (dv) \delta_{\omega^\ell_y (t)} (d \alpha).\]
By Lemmas \ref{lem:MicroEntroIneqn} and \ref{lem:zrpinitial}, we could show that any weak limit of $\pi_t^{M,\ell}$ is concentrated on measure-valued entropy solutions to \eqref{zrp:Hydro} with initial value $\alpha^{\rm ini}$.  Moreover, since the process is attractive, it is easy to see that the limiting measure-valued entropy solutions are of Dirac type, \emph{cf.\,}\cite[Theorem 8.1.1]{klscaling} for example. For every smooth test function $\varphi$ with compact support in $\R$, note that
\[\frac{1}{M} \sum_{y \in \Z} \omega_y (t) \varphi(\tfrac{y}{M}) = \<\pi_t^{M,\ell},\varphi \alpha\> + \mathcal{O} (\ell / M).\]
This permits us to show that any weak limit of $\pi^M_t (dv)$ is concentrated on entropy  solutions to \eqref{zrp:Hydro} with initial value $\alpha^{\rm ini}$. We conclude the proof since the entropy solution is unique.

\bibliographystyle{imsart-number} 
\bibliography{bibliography}       


\end{document}